\numberwithin{equation}{section}
\newtcolorbox{shadedbox}{
  breakable,
  enhanced jigsaw,
  colback=white,
}
\numberwithin{equation}{section}
\newtheorem{thm}{Theorem}[section]
\newtheorem{lem}{Lemma}[section]
\newtheorem{rem}{Remark}[section]
\newtheorem{prop}{Proposition}[section]
\newcommand{\ed}{\end {document}}
\begin{document}
\title{A decreasing upper bound of energy for time-fractional phase-field equations}

\author[1]{Chaoyu Quan}
\author[2,3]{Tao Tang}
\author[4,5]{Boyi Wang}
\author[4,3]{Jiang Yang}

\affil[1]{\small SUSTech International Center for Mathematics, Southern University of Science and Technology, Shenzhen, China (\href{mailto:quancy@sustech.edu.cn}{quancy@sustech.edu.cn}).}
\affil[2]{\small Division of Science and Technology, BNU-HKBU United International College, Zhuhai, Guangdong, China (\href{mailto:ttang@uic.edu.cn}{ttang@uic.edu.cn}).}
\affil[3]{\small Guangdong Provincial Key Laboratory of Computational Science and Material Design, Southern University of Science and Technology, Shenzhen, China  (\href{mailto:yangj7@sustech.edu.cn}{yangj7@sustech.edu.cn}).}
\affil[4]{\small Department of Mathematics, Southern University of Science and Technology, Shenzhen, China.}
\affil[5]{\small Department of Mathematics, National University of Singapore, Singapore (\href{mailto:boyiwang@u.nus.edu}{boyiwang@u.nus.edu}).}

\date{}

\maketitle
\begin{abstract}
In this article, we study the energy dissipation property of time-fractional Allen--Cahn equation. 
We propose a decreasing upper bound of energy that decreases with respect to time and coincides with the original energy at $t = 0$ and as $t$ tends to $\infty$. 
This upper bound can also be viewed as a nonlocal-in-time modified energy, the summation of the original energy and an accumulation term due to the memory effect of time fractional derivative.
In particular, this indicates that the original energy indeed decays w.r.t. time in a small neighborhood at $t=0$.  
We illustrate the theory mainly with the time-fractional Allen--Cahn equation, but it could be applied to other time-fractional phase-field models such as the Cahn--Hilliard equation.

On the discrete level, the first-order L1 and second-order L2 schemes for time-fractional Allen--Cahn equation have similar decreasing modified energies, so that the stability can be established.
Some numerical results are provided to illustrate the behavior of this modified energy and to verify our theoretical results.
\end{abstract}

\begin{keywords}
{time-fractional Allen--Cahn equation, energy dissipation, L1 approximation, L2 approximation}
\end{keywords}

\section{Introduction}
Phase-field models have various applications in diverse regions such as hydrodynamics, material sciences, image processing and biology simulation, etc.
Most phase-fields models are derived as gradient flows associating with some specific energy functional, such as the Ginzburg--Landau energy for Allen--Cahn equations and Cahn--Hilliard equations, Swift--Hohenberg energy for phase-field crystal models.
Seeking numerical solutions of phase-field equations has attracted a lot of  attentions in the passed decade, which could be a delicate task: intrinsic properties of the solution shall be recovered on the discrete level (energy dissipation, maximum principle)
and the presence of small parameter $\varepsilon > 0$ can generate practical difficulties.
There have been plenty of numerical schemes for phase-field equations, including the convex-splitting schemes \cite{elliott1993global,eyre1998unconditionally,wang2010unconditionally,chen2012linear}, the stabilization schemes  \cite{zhu1999coarsening,xu2006stability,shen2010numerical}, the implicit-explicit (IMEX) schemes \cite{tang2016implicit,li2016characterizing,li2021stability}, the operator splitting methods \cite{li2021strang-a,li2022stability}, the scalar auxiliary variable (SAV) schemes \cite{shen2018scalar,shen2019new}, and the exponential time differencing (ETD) schemes \cite{du2020review,fu2022energy}.

Recently much interest has arisen in the study of the time-fractional phase-field (TFPF) equations.
For instance, phase-field framework has been successfully employed to describe the evolution of structural damage and fatigue \cite{caputo2015}, in which the damage is described by a variable order time fractional
derivative.
In \cite{zhao2018Time} 
, the TFPF models account for the anomalously subdiffusive transport behavior in heterogeneous porous materials.
Liu et al. study the coarsening dynamics for the time-fractional Cahn-Hilliard (TFCH) model based on numerical observations in \cite{zhao2018Time}
, while Chen et al. consider the time-fractional molecular beam epitaxy model in \cite{chen2019accurate}.
These problems are challenging due to the existence of both nonlocality and nonlinearity.
It is natural to extend the relevant discrete level intrinsic properties,
i.e., the maximum principle and energy stability to handle the
TFPF equations, e.g., \cite{du2020time,Luchko2017,lisalgado21}.

The Allen-Cahn (AC) model is a popular phase-field model with the governing equation
\begin{equation}
  \partial_t u= \gamma(\varepsilon^2\Delta u - F'(u)),\quad (t,x)\in  (0,T) \times \Omega \label{eq2},
\end{equation}
where $\varepsilon>0$ is the interface width, $\gamma>0$ is the diffusion mobility constant, and
\begin{align}
F(u) = \frac14(u^2-1)^2
\end{align}
is the double well potential.
The energy functional of the AC equation \eqref{eq3} is
\begin{equation}
  E(u) \coloneqq \int_\Omega \left(\frac{\varepsilon^2}{2}|\nabla u|^2 +F(u)\right) \,{\mathrm d}x.\label{glenergy}
 \end{equation}
With homogeneous Dirichlet/Neumann or periodic boundary condition, this energy decreases with respect to time:
\begin{equation}
  \frac{\mathrm d}{\mathrm d t} E(u) =  -\gamma^{-1}\int_\Omega (\partial_t u)^2 \,{\mathrm d}x \leq 0,
\end{equation}
i.e., the so-called energy dissipation law.

In this work we are concerned with the time-fractional Allen--Cahn (TFAC) equation:
\begin{equation}
  \partial_t^\alpha u= \gamma\left(\varepsilon^2\Delta u -F'(u)\right),\quad(t,x)\in  (0,T) \times \Omega \label{eq3},
\end{equation}
where $\alpha\in(0,1)$ and $\partial_t^\alpha$ is the Caputo fractional derivative  defined by
\begin{displaymath}
\partial_t^\alpha u \coloneqq \frac{1}{\Gamma(1-\alpha)}\int_0^t\frac{\partial_\tau u(\tau)}{(t-\tau)^\alpha}\,{\mathrm d}\tau,\quad t >0.
\end{displaymath}
It is still an open question if the solution of TFAC equation still preserves the energy dissipation law.
As for ordinary fractional integral equation, Volterra considered the energy law and proposed an energy equation \cite[p.193]{volterra2005theory}. 
In \cite{tang2019energy}, Tang, Yu, and Zhou proved the energy boundedness for different TFPF equations that the energy is bounded by initial energy: $\forall t\geq 0,$
\begin{equation}
E(t) \leq E(0).
\end{equation}
Later in \cite{du2020time}, Du, Yang, and Zhou studied the well-posedness, regularity, and maximal principle of the TFAC equation, and observed numerically the fractional energy law
\begin{equation}\label{eq:fraclaw}
\partial_t^\alpha E \le 0,
\end{equation}
which is proved theoretically by some of us in \cite{quan2020define}. Fritz, Khristenko, and Wohlmuth proposed the equivalence between the time-fractional gradient flow and an integer-order gradient flow in the augmented Hilbert space \cite{fritz2021equivalence}, where a dissipation-preserving augmented energy $E^{\mathrm{aug}}$ is constructed.
For example for the TFAC equation, 
\begin{equation}
E^{\mathrm{aug}}(t) = E(t) + \frac12 \int_0^1\int_\Omega c_0^2(\theta)\left| \int_0^t e^{-c_1(\theta)\cdot (t-s)} \mu\,{\mathrm d} s \right|^2 \,{\mathrm d} x \, w_{\alpha,1}(\theta)\,{\mathrm d}\theta,
\end{equation}
where $c_0(\theta) = (1-\theta)^{-1},~c_1(\theta) =\theta (1-\theta)^{-1}$ and $w_{\alpha,1} = c_1^{1-\alpha}(\theta)/\left(\Gamma(1-\alpha) \Gamma(\alpha)\right)$. 
A variational energy law is proposed by Liao, Tang, and Zhou for the TFAC equation in \cite{liao2021energy} as follows
\begin{equation}
E_\alpha(t) = E(t) + \frac12 \mathcal I_t^\alpha \| \delta_u E\|^2,
\end{equation}
where $\mathcal I_t^\alpha$ denotes the Riemann-Liouville fractional integration operator of order $\alpha\in(0,1)$.
However it is not obvious to generalize the variational energy properly to the TFCH model. 

On the discrete level, the discrete fractional energy law $\overline \partial_t^\alpha E\leq 0$ also holds for L1 schemes of TFPF equations \cite{quan2020numerical}.
It is further shown that the energy boundedness by initial energy can be ensured for arbitrary nonuniform time meshes.
In recent interesting works \cite{hou2021robust,hou2021highly}, Hou and Xu split the nonlocal time-fractional derivative to local and nonlocal terms for the TFAC equation, and treat the derived nonlocal term with the SAV technique, so that the modified discrete energy of L1 and L2 schemes decreases w.r.t. time.

In this article, we show the following identity on the original energy of TFAC equation
\begin{align}
\boxed{
  \gamma\Gamma(1-\alpha) \frac{\mathrm{d}}{\mathrm{d}t} E(t) =  - \frac{\mathrm{d}}{\mathrm{d}t} {D_{\alpha}(t)}-\alpha D_{\alpha+1}(t),}
\end{align}
where the nonlocal term $D_{\alpha}(t)\geq 0$ is well defined in \eqref{energy-diss-1}. 
Then a decreasing upper bound $\tilde E(t)$ of the original energy is deduced
\begin{equation}
\boxed{
 \tilde E(t) = E(t)+\frac 1 {\gamma\Gamma(1-\alpha)} D_{\alpha}(t).}
\end{equation}
This energy bound functional is the sum of the original energy and a nonnegative term arisen from the time-fractional Caputo derivative (see Figure \ref{fig-intro} for graphical illustration of $\tilde E$ and $E$).
Particularly $\tilde E$ has the following features:
\begin{itemize}
\item $\tilde E$ decreases w.r.t. time;
\item $\tilde E = E$ at $t= 0$ and $\tilde E\rightarrow E$ as $t\to\infty$;
\item for any fixed $ t\in(0,\infty)$, $\tilde E(t)\rightarrow E(t)$ as $\alpha\to 1$.
\end{itemize}
Clearly, such result can be viewed as a generalization of the boundedness by initial energy proposed in \cite{tang2019energy}.
It also indicates that the original energy indeed decays w.r.t. time in some small neighborhood at $t=0$.  
However, it is still unknown if this holds true in longer time.
The construction of upper bound for TFAC equation can be generalized to the time-fractional Cahn--Hilliard (TFCH) equation. 
Furthermore, on the discrete level, similar decreasing discrete upper bounds can be obtained for the L1-type and L2-type schemes of the TFAC equation. 

\begin{figure}[htp!]
  \centering
  \includegraphics[width=0.4\textwidth]{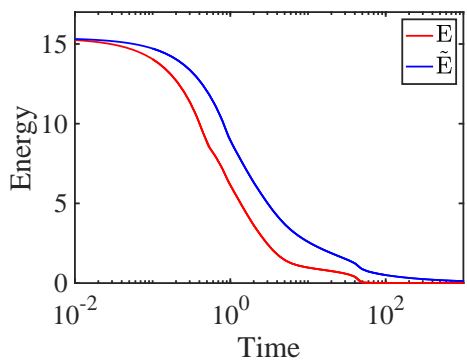}
  \centering
  \caption{Numerical comparison of the energy bound $\tilde E$ and the original Ginzburg-Landau energy $E$ of the time-fractional Allen--Cahn equation.}\label{fig-intro}
\end{figure}

This paper is organized as follows.
In Section \ref{sect2}, we recall some preliminary knowledges on the time-fractional Caputo derivative and the regularity of the solution to TFAC equation.
We introduce a new property on the Caputo derivative in Section \ref{sect3.1}, based on which,  a decreasing energy bound is constructed for the TFAC equation in Section \ref{sect3.2}.
Then we show the relation between the original energy and the energy bound in Section \ref{sect3.3}.
Correspondingly, we propose the discrete modified energy of the first-order L1 implicit-explicit scheme in Section \ref{sect4.1} and the second-order L2 implicit-explicit  scheme for the TFAC equation in Section \ref{sect4.2}.
The analysis for the TFAC equation is extended to the TFCH equation in Section \ref{sect5}. 
We provide some numerical experiments to verify our theoretical results in Section \ref{sect6}.
Several concluding remarks are given in the last section.

\section{Preliminaries}\label{sect2}
We introduce some definitions and theoretical results that are useful in later analysis.

For abbreviation, let $u(t)=u(t,\cdot)$ denote $u(t,x)$ and $\|\cdot\|$ denote the standard spatial $L^2$ norm $\|\cdot\|_{L^2(\Omega)}$.
We consider an equivalent definition of the Caputo derivative obtained from integrating by part as follows (see \cite{allen2016parabolic})
\begin{equation}
\begin{aligned}\label{newdef}
\partial^\alpha_t u(t) &= \frac{1}{\Gamma(1-\alpha)}\int_0^t \frac{\partial_\tau(u(\tau)-u(t))}{(t-\tau)^\alpha}\,{\mathrm d}\tau\\
&= \frac{1}{\Gamma(1-\alpha)}\lim_{\delta\rightarrow 0}\left(\frac{u(\tau)-u(t)}{(t-\tau)^\alpha}\Bigg|_{\tau = 0}^{\tau=t-\delta} -\alpha \int_0^{t-\delta}\frac{u(\tau)-u(t)}{(t-\tau)^{\alpha+1}}\,{\mathrm d}\tau\right)\\
 &= \frac{1}{\Gamma(1-\alpha)}\Bigg(\frac{u(t)-u(0)}{t^\alpha}+\alpha \int_0^{t}\frac{u(t)-u(\tau)}{(t-\tau)^{\alpha+1}}\,{\mathrm d}\tau\Bigg).
 \end{aligned}
 \end{equation}

We recall the definition of Bochner-Sobolev space $H^s(0, T;L^2(\Omega))$, see for example \cite{du2020time} for the settings. 
For any $0<s<1$, one can define Sobolev-Slobodeckii seminorm $|\cdot |_{H^{s}(0,T;L^2(\Omega))}$ by 
\begin{equation} |v|_{H^{s}(0,T;L^2(\Omega))}=\int_0^T\int_0^T\frac{\|(v(t)-v(\tau))\|^2}{|t-\tau|^{1+2s}} \,{\mathrm d}\tau{\mathrm d}t,\label{sobo-1}
\end{equation} 
and the full norm $\|\cdot\|_{H^{s}(0,T;L^2(\Omega))}$ by \begin{equation} \|v\|_{H^{s}(0,T;L^2(\Omega))}=\|v\|_{L^2(0,T;L^2(\Omega))}^2+|v|^2_{H^{s}(0,T;L^2(\Omega))}.\label{sobo-2}\end{equation}

We recall the following regularity result for TFAC equation with Dirichlet boundary condition obtained by Du et al. \cite{du2020time}.However, the authors claim in \cite[Remark 4.1]{du2020time}  that the results also hold for the homogeneous Neumann or periodic boundary condition.
 \begin{prop} \label{theorem1} (\cite{du2020time}) Assuming that the initial data $u(0,x) = u_0(x)\in H^2(\Omega)\bigcap H_0^1(\Omega)$. 
 Then, for $\alpha\in (0,1)$ and $T>0$, there exists a unique solution to the system \eqref{eq3} with homogeneous Dirichlet boundary condition satisfying \begin{align} & u\in H^\alpha(0,T;L^2(\Omega))\bigcap L^2(0,T;H^2(\Omega)\cap H_0^1(\Omega)),~u\in L^\infty((0,T)\times \Omega),\nonumber\\
  &u\in C^\alpha([0,T]; L^2(\Omega))\bigcap C([0,T];H^2(\Omega)\cap H_0^1(\Omega)),~ \partial_t^\alpha u\in C([0,T];L^2(\Omega)),\nonumber\\
  &  \partial_t u\in C([0,T]; L^2(\Omega)),~\|\partial_tu(t)\|\le Ct^{\alpha-1} \text{ for } t\in (0,T].\nonumber
  \end{align} Here and in the following $C$ denotes one constant depending upon $u_0$ and $T$, and changing from line to another.
  \end{prop}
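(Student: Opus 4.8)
The plan is to treat \eqref{eq3} as a semilinear fractional evolution equation and to construct the solution by a fixed-point argument built on the solution operators of the associated \emph{linear} fractional diffusion equation. Writing $A=-\gamma\varepsilon^2\Delta$ with domain $H^2(\Omega)\cap H_0^1(\Omega)$, so that $A$ is self-adjoint and positive with eigensystem $\{(\lambda_n,\phi_n)\}$, and setting $g(u)=-\gamma F'(u)$, the mild formulation of \eqref{eq3} is
\begin{equation}
u(t)=S(t)u_0+\int_0^t P(t-s)\,g(u(s))\,\mathrm{d}s,\label{mildsol}
\end{equation}
where, in terms of the Mittag-Leffler functions $E_{\alpha,1}$ and $E_{\alpha,\alpha}$,
\begin{equation}
S(t)v=\sum_n E_{\alpha,1}(-\lambda_n t^\alpha)(v,\phi_n)\phi_n,\qquad P(t)v=\sum_n t^{\alpha-1}E_{\alpha,\alpha}(-\lambda_n t^\alpha)(v,\phi_n)\phi_n.\label{mlops}
\end{equation}
The first task is to record the standard smoothing estimates (Sakamoto--Yamamoto type), namely $\|S(t)v\|\le C\|v\|$, $\|AS(t)v\|\le Ct^{-\alpha}\|v\|$, $\|P(t)v\|\le Ct^{\alpha-1}\|v\|$ and $\|AP(t)v\|\le Ct^{-1}\|v\|$, together with the identity $\partial_t S(t)v=-P(t)Av$ for $v\in D(A)$. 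These follow from the boundedness of $xE_{\alpha,1}(-x)$ and $(1+x)E_{\alpha,\alpha}(-x)$ for $x\ge 0$ and the complete monotonicity of $E_{\alpha,1}(-x)$.

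Next I would establish local existence and uniqueness. Since $F'(u)=u^3-u$ is locally Lipschitz, $g$ is Lipschitz on bounded subsets of $L^\infty(\Omega)$, so on a small ball in $C([0,T_0];L^\infty(\Omega))$ the map defined by the right-hand side of \eqref{mildsol} is a contraction for $T_0$ sufficiently small, and the Banach fixed-point theorem yields a unique local mild solution. To promote this to a global solution on $[0,T]$, the crucial step is an a priori $L^\infty$ bound independent of $T_0$, which I would obtain from a \emph{maximum principle} for the nonlocal-in-time operator. The extremum principle for the Caputo derivative is already visible from the form \eqref{newdef}: if $w$ attains its maximum over $[0,t_0]$ at $t_0$, both terms in \eqref{newdef} are nonnegative, so $\partial_t^\alpha w(t_0)\ge 0$. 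Combining this with the sign structure $F'(u)>0$ for $u>1$ at a space-time maximum (where $\Delta u\le 0$) gives a contradiction unless $u(t_0,x_0)\le 1$; the symmetric argument for the minimum yields $\|u(t)\|_{L^\infty}\le\max\{\|u_0\|_{L^\infty},1\}$, ruling out finite-time blow-up and extending the solution to all of $[0,T]$, so that $u\in L^\infty((0,T)\times\Omega)$.

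With the $L^\infty$ bound in hand, $g(u)\in L^\infty(0,T;L^2(\Omega))$, and the remaining regularity is bootstrapped directly from \eqref{mildsol}. Because $\|AP(\tau)\|\le C\tau^{-1}$ is not integrable at $\tau=0$, I would split $g(u(s))=g(u(t))+[g(u(s))-g(u(t))]$: the constant-in-$s$ piece integrates to $(I-S(t))g(u(t))$ via $A\int_0^t P(\tau)\,\mathrm{d}\tau=I-S(t)$, while the remainder is tamed by the temporal H\"older continuity of $g(u(\cdot))$, which converts the kernel into the integrable $C(t-s)^{\beta-1}$. This yields $u\in C([0,T];H^2\cap H_0^1)$ and hence $\partial_t^\alpha u=-Au+g(u)\in C([0,T];L^2)$, while estimating $\|S(t)u_0-S(s)u_0\|$ and the Duhamel term using $u_0\in D(A)$ gives $\|u(t)-u(s)\|\le C|t-s|^\alpha$, i.e. $u\in C^\alpha([0,T];L^2)\subset H^\alpha(0,T;L^2)$. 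Finally, the sharp singular estimate $\|\partial_t u(t)\|\le Ct^{\alpha-1}$ comes from differentiating \eqref{mildsol}: the homogeneous part is $\partial_t S(t)u_0=-P(t)Au_0$ with $\|P(t)Au_0\|\le Ct^{\alpha-1}\|u_0\|_{H^2}$, and the Duhamel part, differentiated carefully, produces a leading boundary term $\sim t^{\alpha-1}E_{\alpha,\alpha}(-\lambda t^\alpha)g(u_0)$ of the same order, with a bounded remainder.

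The main obstacle is twofold. First, the global bound requires a genuinely nonlinear maximum principle for a nonlocal-in-time operator: the Caputo extremum principle must be combined with the cubic nonlinearity, and the pointwise evaluation of $\partial_t^\alpha u$, $\Delta u$ that the argument uses presupposes regularity that is only being established, so this must be closed self-consistently (e.g. through a regularized or Galerkin approximation on which the bound is proved and then passed to the limit). Second, obtaining the sharp near-origin rate $t^{\alpha-1}$, rather than the cruder rates that naive kernel estimates produce, hinges on the full strength of $u_0\in H^2\cap H_0^1=D(A)$ and on the cancellation encoded in $\partial_t S(t)v=-P(t)Av$; propagating this through the Duhamel term, where $g(u)$ is known only to be bounded and H\"older in time, is the delicate part and is exactly where the hypothesis on $u_0$ is indispensable.
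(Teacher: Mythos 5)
This proposition is imported verbatim from \cite{du2020time} and the paper supplies no proof of its own, so the only available comparison is with that reference and with Appendix \ref{append3}, where the authors reuse the same machinery: your outline --- mild formulation through the Mittag-Leffler solution operators $S(t)$ and $P(t)$, Sakamoto--Yamamoto smoothing estimates, a local contraction argument, globalization via the Caputo extremum principle and the resulting $L^\infty$ bound, and a bootstrap giving $C([0,T];H^2\cap H_0^1)$, $C^\alpha([0,T];L^2)$ and the sharp rate $\|\partial_t u(t)\|\le Ct^{\alpha-1}$ from $u_0\in D(A)$ --- is exactly the route taken there. The plan is correct as a proof strategy, and you rightly flag the two genuinely delicate points (closing the pointwise maximum principle on a regularized or Galerkin approximation, and propagating the $t^{\alpha-1}$ cancellation through the Duhamel term), so nothing essential is missing relative to the source.
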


\section{Decreasing energy bound}
In this part, we consider the TFAC equation
\begin{subequations}
  \begin{empheq}[left=\empheqlbrace]{align}
    &\partial_t^\alpha u= \gamma \left(\varepsilon^2\Delta u -f(u)\right),\quad x\in \Omega,\quad t>0 \label{eq01}\\
    &u(0,x) = u_0(x),\quad x\in\Omega
  \end{empheq}
\end{subequations}
with $f(u) = u^3-u$. For the simplicity, we consider the homogeneous Dirichlet boundary condition so that Proposition \ref{theorem1} holds. The cases of other boundary conditions can be done in the same way.

\subsection{A property of Caputo derivative}\label{sect3.1}
Before proposing the decreasing upper bound of the original energy, we first propose a property of Caputo derivative.

  \begin{lem}\label{lemma1111}
    Assuming that $u_0(x)\in H^2(\Omega)\bigcap H_0^1(\Omega)$. 
    For any $t\in (0,T]$, it holds
   \begin{align}
    \Gamma(1-\alpha)\langle\partial_t^\alpha u, \partial_t u\rangle = \frac{\mathrm{d}}{\mathrm{dt}}D_{\alpha}(t)+\alpha D_{\alpha+1}(t)\label{eqlemma1111},
  \end{align}
  where
  \begin{equation}\label{energy-diss-1}
    D_{\alpha}(t) = \frac{\Vert u(t)-u(0)\Vert^2}{2t^\alpha}
    +\frac{\alpha}{2} \int_0^t\frac{\|u(t)-u(\tau)\|^2}{{(t-\tau)^{\alpha+1}}}\,{\mathrm d}\tau
  \end{equation}
  is wellposed.

  \end{lem}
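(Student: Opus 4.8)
The plan is to start from the integrated-by-parts representation \eqref{newdef} of the Caputo derivative and pair it with $\partial_t u$ in the spatial $L^2(\Omega)$ inner product. Writing $\Gamma(1-\alpha)\partial_t^\alpha u = \frac{u(t)-u(0)}{t^\alpha} + \alpha\int_0^t\frac{u(t)-u(\tau)}{(t-\tau)^{\alpha+1}}\,{\mathrm d}\tau$ and applying $\langle\,\cdot\,,\partial_t u\rangle$, the left-hand side of \eqref{eqlemma1111} splits into a ``boundary'' term $\langle u(t)-u(0),\partial_t u\rangle/t^\alpha$ and an integral term $\alpha\int_0^t \langle u(t)-u(\tau),\partial_t u\rangle/(t-\tau)^{\alpha+1}\,{\mathrm d}\tau$. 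The whole argument then reduces to recognizing these two quantities as exactly the pieces produced by differentiating $D_\alpha(t)$ and adding $\alpha D_{\alpha+1}(t)$, where $D_{\alpha+1}$ is \eqref{energy-diss-1} with $\alpha$ replaced by $\alpha+1$.

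Before differentiating I would first settle the wellposedness asserted in the statement, since it also identifies the integrable terms. Using $u\in C^\alpha([0,T];L^2(\Omega))$ from Proposition \ref{theorem1}, the bound $\|u(t)-u(\tau)\|\le C|t-\tau|^\alpha$ makes the integrand of $D_\alpha$ behave like $(t-\tau)^{\alpha-1}$ near $\tau=t$, which is integrable, and it is bounded near $\tau=0$. For $D_{\alpha+1}$ the H\"older bound alone is insufficient (it only yields $(t-\tau)^{\alpha-2}$), so here I would instead use $\|\partial_t u(t)\|\le Ct^{\alpha-1}$ and write $u(t)-u(\tau)=\int_\tau^t\partial_s u\,{\mathrm d}s$ to obtain, for fixed $t>0$, the sharper estimate $\|u(t)-u(\tau)\|^2\lesssim (t-\tau)^2$ as $\tau\to t$; then the integrand is $O((t-\tau)^{-\alpha})$, integrable because $\alpha<1$.

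Next I would differentiate $D_\alpha=A+B$ with $A(t)=\|u(t)-u(0)\|^2/(2t^\alpha)$ and $B(t)=\frac{\alpha}{2}\int_0^t\|u(t)-u(\tau)\|^2/(t-\tau)^{\alpha+1}\,{\mathrm d}\tau$. The term $A$ is elementary: the product rule and $\frac{\mathrm{d}}{\mathrm{d}t}\|u(t)-u(0)\|^2=2\langle u(t)-u(0),\partial_t u\rangle$ give $\langle u(t)-u(0),\partial_t u\rangle/t^\alpha-\alpha\|u(t)-u(0)\|^2/(2t^{\alpha+1})$. For $B$ I would differentiate under the integral sign with the moving upper limit: the Leibniz boundary contribution at $\tau=t$ vanishes because the integrand $\sim(t-\tau)^{1-\alpha}\to0$, and $\partial_t\!\left[\|u(t)-u(\tau)\|^2/(t-\tau)^{\alpha+1}\right]$ produces precisely $\alpha\int_0^t\langle u(t)-u(\tau),\partial_t u\rangle/(t-\tau)^{\alpha+1}\,{\mathrm d}\tau$ together with $-\frac{\alpha(\alpha+1)}{2}\int_0^t\|u(t)-u(\tau)\|^2/(t-\tau)^{\alpha+2}\,{\mathrm d}\tau$. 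Adding $\frac{\mathrm{d}}{\mathrm{d}t}A$ and $\frac{\mathrm{d}}{\mathrm{d}t}B$, the inner-product terms assemble into $\Gamma(1-\alpha)\langle\partial_t^\alpha u,\partial_t u\rangle$, while the two norm-squared terms assemble into $-\alpha D_{\alpha+1}(t)$, which is the claimed identity after rearranging.

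The main obstacle is the rigorous justification of the differentiation under the integral sign, because the integrand of $B$ is singular at $\tau=t$ and $t$ enters both the integrand and the upper limit. I would handle this exactly as \eqref{newdef} handles the definition of $\partial_t^\alpha u$: truncate at $\tau=t-\delta$, forming $B_\delta$ whose integrand is smooth on $[0,t-\delta]$ so that Leibniz applies with no boundary subtlety, and then pass to the limit $\delta\to0$. Two items must be controlled: the truncation boundary term $\frac{\alpha}{2}\|u(t)-u(t-\delta)\|^2/\delta^{\alpha+1}\sim\delta^{1-\alpha}\to0$, and the convergence of the truncated integrals to the full ones, which follows from dominated convergence using the integrable majorants established in the wellposedness step. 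Verifying this uniform domination — and hence that the difference quotient of $B$ genuinely converges to the integral written above — is the technical crux; the algebraic matching of terms afterward is routine.
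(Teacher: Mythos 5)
Your proposal is correct and follows essentially the same route as the paper: both start from the integrated-by-parts form \eqref{newdef}, split the pairing with $\partial_t u$ into the boundary piece and the singular integral piece, differentiate the latter via truncation at $\tau=t-\delta$, and use $\|\partial_t u(t)\|\le Ct^{\alpha-1}$ (rather than mere H\"older continuity) to kill the $\delta^{1-\alpha}$ boundary term and to establish wellposedness of $D_{\alpha+1}$. You merely present the computation in the reverse direction (differentiating $D_\alpha$ and matching against the left-hand side) and are somewhat more explicit than the paper about justifying the interchange of limit and derivative, which is a point the paper's proof glosses over.
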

  \begin{proof}
  According to the definition \eqref{newdef} of Caputo derivative, we have
  \begin{align}
   \Gamma(1-\alpha)\langle\partial_t^\alpha u, \partial_t u\rangle = \left\langle\frac{u(t)-u(0)}{t^\alpha}
    +\alpha\int_0^{t}\frac{u(t)-u(\tau)}{(t-\tau)^{\alpha+1}}\,{\mathrm d}\tau,\partial_t u\right\rangle.\label{eq000}
  \end{align}
  Replacing $\partial_t u(t)$ in \eqref{eq000} by $\partial_t \left(u(t)-u(\tau)\right)$, we have
  \begin{align}
    &\Gamma(1-\alpha)\langle\partial_t^\alpha u, \partial_t u\rangle \notag \\
    & = \frac{\langle u(t)-u(0),\partial_t(u(t)-u(0))\rangle}{t^\alpha}+\alpha \int_0^{t}\frac{\langle u(t)-u(\tau),\partial_t(u(t)-u(\tau))\rangle}{(t-\tau)^{\alpha+1}}\,{\mathrm d}\tau\notag\\
    &= {I}_1+\alpha {I}_2,\label{LHS}
    \end{align}
    where
    \begin{equation}
    {I}_1 = \frac{\langle u(t)-u(0),\partial_t(u(t)-u(0))\rangle}{t^\alpha}\notag
    \end{equation}
    and
    \begin{equation}
    {I}_2 =\int_0^{t}\frac{\langle u(t)-u(\tau),\partial_t(u(t)-u(\tau)\rangle}{(t-\tau)^{\alpha+1}}\,{\mathrm d}\tau.\notag
    \end{equation}
    As for $I_1$, by writing the numerator as a derivative of the $L^2(\Omega)$ norm, direct calculation leads to
    \begin{align}
    {I}_1 &= \frac{1}{2t^\alpha}\frac{\mathrm{d}}{\mathrm{d}t}\Vert u(t)-u(0)\Vert^2
    = \frac {\mathrm d}{\mathrm{d}t} \Bigg(\frac{\Vert u(t)-u(0)\Vert^2}{2t^\alpha} \Bigg)+ \alpha\frac{\Vert u(t)-u(0)\Vert^2}{2t^{\alpha+1}}\label{I1}.
    \end{align}
    As for $I_2$, we have, for $t>0$,
    \begin{align}
    & \frac{\mathrm{d}}{\mathrm{d}t} \int_0^t\frac{\left\Vert u(t)-u(\tau)\right\Vert^2}{{(t-\tau)^{\alpha+1}}}\,{\mathrm d}\tau 
    = \frac{\mathrm{d}}{\mathrm{d}t}\lim_{\delta\rightarrow0} \int_0^{t-\delta}\frac{\Vert u(t)-u(\tau)\Vert^2}{{(t-\tau)^{\alpha+1}}}\,{\mathrm d}\tau \notag\\
    = &\lim_{\delta\rightarrow0}\frac{\Vert u(t)-u(t-\delta)\Vert^2}{\delta^{\alpha+1}}
    +2\int_0^{t}\frac{\langle u(t)-u(\tau),\partial_t(u(t)-u(\tau)\rangle}{(t-\tau)^{\alpha+1}}\,{\mathrm d}\tau
    -(\alpha+1)\int_0^t\frac{\Vert u(t)-u(\tau)\Vert ^2}{(t-\tau)^{\alpha+2}}\,{\mathrm d}\tau\nonumber\\
    = &2{I}_2-(\alpha+1)\int_0^t\frac{\Vert u(t)-u(\tau)\Vert^2}{(t-\tau)^{\alpha+2}}\,{\mathrm d}\tau,\nonumber
    \end{align}
    which gives
    \begin{align}
    {I}_2 = \frac12\frac {\mathrm d}{\mathrm{d}t}\Bigg(\int_0^t\frac{\Vert u(t)-u(\tau)\Vert ^2}{{(t-\tau)^{\alpha+1}}}\,{\mathrm d}\tau \Bigg)+\frac{(\alpha+1)}{2}\int_0^t\frac{\Vert u(t)-u(\tau)\Vert ^2}{(t-\tau)^{\alpha+2}}\,{\mathrm d}\tau\label{I2}.
    \end{align}
    Thus,
    \begin{align}\label{eq:3.8}
      \Gamma(1-\alpha)\langle\partial_t^\alpha u, \partial_t u\rangle
      & = \frac {\mathrm d}{\mathrm{d}t} \Bigg(\frac{\Vert u(t)-u(0)\Vert^2}{2t^\alpha}+\frac\alpha2\int_0^t\frac{\Vert u(t)-u(\tau)\Vert ^2}{{(t-\tau)^{\alpha+1}}}\,{\mathrm d}\tau \Bigg) \notag\\
       &
       + \frac{\alpha \Vert u(t)-u(0)\Vert^2}{2t^{\alpha+1}} + \frac{\alpha(\alpha+1)}{2}\int_0^t\frac{\Vert u(t)-u(\tau)\Vert ^2}{(t-\tau)^{\alpha+2}}\,{\mathrm d}\tau.
    \end{align}
     We then obtain
    \begin{align}
      \Gamma(1-\alpha)\langle\partial_t^\alpha u, \partial_t u\rangle = \frac{\mathrm{d}}{\mathrm{d}t}D_{\alpha}(t)+\alpha D_{\alpha+1}(t),\label{AC-ed-1}
    \end{align}
    where $D_{\alpha}(t)$ is defined in \eqref{energy-diss-1}.
    
   The remaining task is to show the wellposedness of the terms on the right-hand side of \eqref{eq:3.8}.
   According to Proposition \ref{theorem1}, we have $\|\partial_t u(t,\cdot)\|\le Ct^{\alpha-1}$ for a fixed $t\in (0,T]$.
   Then we have
   \begin{align}\label{well-1}
&\frac{\|u(t)-u(0)\|^2}{{t^{\alpha+1}}}=\frac{\|\int_0^t\partial_\xi u(\xi,\cdot)\,{\mathrm d}\xi\|^2}{{t^{\alpha+1}}}\le \frac{(\int_0^t\|\partial_\xi u(\xi,\cdot)\|\,{\mathrm d}\xi)^2}{{t^{\alpha+1}}}\le Ct^{\alpha-1}
\end{align}
and
\begin{align}\label{well-2}
\int_0^t\frac{\|u(t)-u(\tau)\|^2}{{(t-\tau)^{\alpha+2}}}\,{\mathrm d}\tau &=\int_0^t\frac{\|\int_\tau^t\partial_\xi u(\xi)\,{\mathrm d}\xi\|^2}{{(t-\tau)^{\alpha+2}}}\,{\mathrm d}\tau
\le \int_0^t\frac{(\int_\tau^t\|\partial_\xi u(\xi)\|\,{\mathrm d}\xi)^2}{{(t-\tau)^{\alpha+2}}}\,{\mathrm d}\tau\nonumber\\
&\le C \int_0^t\frac{(\int_\tau^t \xi^{\alpha-1} \,{\mathrm d}\xi)^2}{{(t-\tau)^{\alpha+2}}}\,{\mathrm d}\tau
=\frac C{\alpha}\int_0^t\frac{(t^\alpha-\tau^\alpha)^2}{{(t-\tau)^{\alpha+2}}}\,{\mathrm d}\tau\nonumber\\
&= \frac{Ct^{\alpha-1}}{\alpha}\int_0^1\frac{(1-s^\alpha)^2}{{(1-s)^{\alpha+2}}}\,{\mathrm d}s
\le Ct^{\alpha-1}.
\end{align}
Similarly we have \begin{align}\label{well-4}
&\frac{\|u(t)-u(0)\|^2}{{t^{\alpha}}}\le Ct^{\alpha},\quad\int_0^t\frac{\|u(t)-u(\tau)\|^2}{{(t-\tau)^{\alpha+1}}}\,{\mathrm d}\tau\le Ct^{\alpha}.
\end{align}
Therefore the well-posedness of $D_\alpha$ and $D_{\alpha+1}$ in the above proof is established. 
  \end{proof}

\subsection{Upper bound of energy}\label{sect3.2}
We propose a decreasing upper bound functional of the original energy for the TFAC equation as follows
\begin{align}\label{modifiedEac}
  \tilde E(t) &= E(t) + \frac 1 {\gamma\Gamma(1-\alpha) }D_{\alpha}(t),
\end{align}
where $E(t)$ is the original Ginzburg--Landau energy given by \eqref{glenergy} and $D_{\alpha}(t)$ is given by \eqref{energy-diss-1}.
We show that this ``modified energy'' decreases w.r.t. time
\begin{equation}
  \frac{\mathrm{d}}{\mathrm{d}t}\tilde E(t)\leq 0, \quad t\ge 0. \label{dis-law-1}
\end{equation}
We are ready to state prove our main result on the dissipation of the modified energy.
\begin{thm}\label{theorem_of_ac}
Assume the initial data $u_0\in H^2(\Omega)\bigcap H_0^1(\Omega)$.
The solution $u$ to the TFAC equation \eqref{eq01} satisfies
\begin{equation}
    E(t)\leq\tilde E(t)\leq\tilde E(s)\leq E(0),\quad \forall 0\le s\le t\le T. \label{dis-law-3}
\end{equation}
where $E(t)$ is the original Ginzburg--Landau energy \eqref{glenergy}  and $\tilde E(t)$ is the upper bound \eqref{modifiedEac} (or modified energy).

\end{thm}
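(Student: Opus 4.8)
The plan is to split the chain \eqref{dis-law-3} into its three links and dispatch them in order, the central link (monotonicity of $\tilde E$) being the substance of the proof and a direct consequence of Lemma~\ref{lemma1111}. The leftmost inequality $E(t)\le\tilde E(t)$ is immediate: by \eqref{modifiedEac} one has $\tilde E(t)-E(t)=\frac{1}{\gamma\Gamma(1-\alpha)}D_\alpha(t)$, and $D_\alpha(t)\ge 0$ because both terms in \eqref{energy-diss-1} are a squared $L^2$-norm divided by a positive quantity, while $\gamma,\Gamma(1-\alpha)>0$.

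For the central inequality I would compute $\frac{\mathrm d}{\mathrm dt}\tilde E(t)$ and show it is nonpositive. Differentiating the Ginzburg--Landau energy \eqref{glenergy} and integrating by parts (the boundary terms vanish under the homogeneous Dirichlet condition) gives $\frac{\mathrm d}{\mathrm dt}E(t)=-\langle\varepsilon^2\Delta u-f(u),\partial_t u\rangle$. Substituting the governing equation \eqref{eq01} in the form $\varepsilon^2\Delta u-f(u)=\gamma^{-1}\partial_t^\alpha u$ yields $\frac{\mathrm d}{\mathrm dt}E(t)=-\gamma^{-1}\langle\partial_t^\alpha u,\partial_t u\rangle$. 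Now I invoke \eqref{eqlemma1111} to replace $\Gamma(1-\alpha)\langle\partial_t^\alpha u,\partial_t u\rangle$ by $\frac{\mathrm d}{\mathrm dt}D_\alpha(t)+\alpha D_{\alpha+1}(t)$, which produces the boxed identity $\gamma\Gamma(1-\alpha)\frac{\mathrm d}{\mathrm dt}E(t)=-\frac{\mathrm d}{\mathrm dt}D_\alpha(t)-\alpha D_{\alpha+1}(t)$. Adding $\frac{1}{\gamma\Gamma(1-\alpha)}\frac{\mathrm d}{\mathrm dt}D_\alpha(t)$ to both sides, the $\frac{\mathrm d}{\mathrm dt}D_\alpha$ terms cancel and I obtain $\frac{\mathrm d}{\mathrm dt}\tilde E(t)=-\frac{\alpha}{\gamma\Gamma(1-\alpha)}D_{\alpha+1}(t)\le 0$, since every factor is nonnegative. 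Hence $\tilde E$ is nonincreasing and $\tilde E(t)\le\tilde E(s)$ for $0<s\le t$.

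For the rightmost inequality $\tilde E(s)\le E(0)$ I would pass to the limit $s\to 0^+$. The regularity $u\in C([0,T];H^2(\Omega)\cap H_0^1(\Omega))$ from Proposition~\ref{theorem1} makes $E(t)$ continuous at the origin, so $E(s)\to E(0)$; meanwhile the estimates \eqref{well-4} established inside the proof of Lemma~\ref{lemma1111} give $D_\alpha(s)\le Cs^\alpha\to 0$. Therefore $\tilde E(s)\to E(0)$, and monotonicity yields $\tilde E(s)\le\tilde E(0^+)=E(0)$ for every $s$, which also extends the whole chain down to $s=0$.

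The main obstacle is making the differentiation of $E(t)$ and the use of Lemma~\ref{lemma1111} rigorous near $t=0$, where Proposition~\ref{theorem1} only guarantees $\|\partial_t u(t)\|\lesssim t^{\alpha-1}$, so $\partial_t u$ need not be square-integrable up to the left endpoint. I would carry out the differentiation and apply \eqref{eqlemma1111} on an arbitrary interval $[\delta,T]$ with $\delta>0$, where $\partial_t u\in C([\delta,T];L^2(\Omega))$, integrate the resulting differential inequality $\frac{\mathrm d}{\mathrm dt}\tilde E\le 0$ there, and then let $\delta\to 0$, using the continuity of $E$ and the vanishing of $D_\alpha$ at the origin supplied by \eqref{well-4} to recover the endpoint value $\tilde E(0^+)=E(0)$.
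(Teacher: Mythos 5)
Your proposal is correct and follows essentially the same route as the paper: test the equation with $\partial_t u$, identify the right-hand side with $-\gamma\,\mathrm{d}E/\mathrm{d}t$, invoke Lemma~\ref{lemma1111} for the left-hand side, and integrate the resulting identity using the nonnegativity of $D_\alpha$ and $D_{\alpha+1}$. Your extra care near $t=0$ (working on $[\delta,T]$ and using \eqref{well-4} to recover $\tilde E(0^+)=E(0)$) is a welcome refinement of a step the paper passes over silently, but it does not change the argument.
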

\begin{proof}
Multiplying the equation \eqref{eq01} with $\partial_t u$, integrating the resulting one over $\Omega$ and using $\langle \cdot, \cdot \rangle$ to denote the $L^2(\Omega)$ inner product, we have
\begin{equation}
\left\langle \partial_t^\alpha u,\partial_t u\right\rangle =\gamma\left \langle\varepsilon^2\Delta u - f(u),\partial_t u\right\rangle.
\end{equation}
The right-hand side of this equation is
\begin{equation}
\text{(RHS)}:= \gamma\langle\varepsilon^2\Delta u - f(u),\partial_t u\rangle = -\gamma\frac{\mathrm{d}}{\mathrm{d}t} E(t), \label{RHS_of_ac}
\end{equation}
while according to  Lemma \ref{lemma1111},  the left-hand side can be written as
\begin{align}
\text{(LHS)} := \langle\partial_t^\alpha u, u_t\rangle = \frac{1}{\Gamma(1-\alpha)}\frac{\mathrm{d}}{\mathrm{d}t}{D_{\alpha}(t)}+\frac{\alpha}{\Gamma(1-\alpha)}{D_{\alpha+1}(t)}\label{lemma1112},
  \end{align}
 where $D_{\alpha}(t)$ is given by \eqref{energy-diss-1}.
We then have
\begin{align}
  \frac{\mathrm{d}}{\mathrm{d}t} \left(E(t) +  \frac{1}{\gamma\Gamma(1-\alpha)}{D_{\alpha}(t)}\right)+\frac{\alpha}{\gamma\Gamma(1-\alpha)} D_{\alpha+1}(t)=0.\label{theoremproofend}
\end{align}
Note that $D_{\alpha+1}(t)$ is positive..
The desired result \eqref{dis-law-3} in Theorem \ref{theorem_of_ac} follows by integrating \eqref{theoremproofend} over the interval $(s,t)$ for $0\le s\le t\le T$.
\end{proof}

\subsection{Relation between $\tilde E$ and $E$}\label{sect3.3}
We show the relation between $\tilde E(t)$ and $E(t)$.
Precisely speaking, the energy functional $\tilde E(t)$ coincides the original Ginzburg-Landau energy $E(t)$ at $t=0$ and as $t\to\infty$, if the solution $u(t)$ of the TFAC equation \eqref{eq01} converges strongly to some steady state $u_\infty(x)$ in $L^2(\Omega)$ as $t\to \infty$.
Before proving this result, we first propose two useful lemmas.

\begin{lem}[maximum principle \& H\"older continuity]\label{lem_Holder} 
Let $u(t,x)$ be the unique solution of the system \eqref{eq01}. Assume that $u_0\in H^2(\Omega)\cap H_0^1(\Omega)$ and $-1\le u_0\le 1$ on $\bar{\Omega}$. Then we have
\begin{itemize}
\item[(1)] $-1\le u(t,x)\le 1$ for any $x\in \bar{\Omega}$ and $t\ge 0$;
\item[(2)] $\exists C>0$, s.t. $\|u(t,\cdot)-u(\tau,\cdot)\| \leq C |t-\tau|^\alpha$ for any $t,\tau\ge 0$.
\end{itemize}
\end{lem}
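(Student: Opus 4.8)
The plan is to establish the two assertions in turn, using the maximum principle (1) to supply the uniform bound that drives the global-in-time H\"older estimate (2). For (1), the invariance of the interval $[-1,1]$ is precisely the maximum principle for the TFAC equation proved by Du, Yang, and Zhou, so I would invoke \cite{du2020time} directly. The underlying mechanism worth recording is that $f(u)=u^3-u$ vanishes at $u=\pm1$ and is confining outside the interval, i.e.\ $f(u)>0$ for $u>1$ and $f(u)<0$ for $u<-1$; combined with the extremum principle for the Caputo derivative (if $u-1$ attained a positive maximum over $[0,t]\times\bar\Omega$ at some time $t_0>0$ and interior point $x_0$, then $\partial_t^\alpha u(t_0,x_0)\ge0$ while $\Delta u(t_0,x_0)\le0$ and $-\gamma f(u(t_0,x_0))<0$), this forces the right-hand side of \eqref{eq01} to be negative against a nonnegative left-hand side, a contradiction. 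Since $u_0\le1$ and $u=0$ on $\partial\Omega$, the bound $-1\le u_0\le1$ therefore propagates to all $t\ge0$.

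For (2), I would start from the fundamental-theorem-of-calculus representation: for $0\le\tau\le t$,
\[
\|u(t)-u(\tau)\|=\Big\|\int_\tau^t\partial_\xi u(\xi)\,{\mathrm d}\xi\Big\|\le\int_\tau^t\|\partial_\xi u(\xi)\|\,{\mathrm d}\xi .
\]
Feeding in the derivative bound $\|\partial_\xi u(\xi)\|\le C\xi^{\alpha-1}$ from Proposition \ref{theorem1} and integrating gives $\int_\tau^t\xi^{\alpha-1}\,{\mathrm d}\xi=\alpha^{-1}(t^\alpha-\tau^\alpha)$, and the subadditivity of $x\mapsto x^\alpha$ on $[0,\infty)$ for $\alpha\in(0,1)$ (a consequence of concavity together with $0^\alpha=0$) yields $t^\alpha-\tau^\alpha\le(t-\tau)^\alpha$. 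Hence $\|u(t)-u(\tau)\|\le(C/\alpha)(t-\tau)^\alpha$, which is the claimed estimate on any finite window. This step is essentially a quantitative reading of the membership $u\in C^\alpha([0,T];L^2(\Omega))$ already recorded in Proposition \ref{theorem1}, and is routine.

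The main obstacle is that Proposition \ref{theorem1} delivers the constant $C$ only on a bounded interval $(0,T]$, with $C=C(T)$, whereas (2) demands a single constant valid for all $t,\tau\ge0$; this is exactly where part (1) enters. I would close the gap by splitting on the separation $|t-\tau|$. For $|t-\tau|\ge1$ the maximum principle makes the bound immediate, since $|u|\le1$ gives $\|u(t)-u(\tau)\|\le2|\Omega|^{1/2}\le2|\Omega|^{1/2}|t-\tau|^\alpha$. For $|t-\tau|\le1$ with both times near the origin, the displayed computation with a fixed window (say $T=2$) suffices. The delicate sub-case is $|t-\tau|\le1$ with $\tau$ large: here I would use that the maximum principle keeps $\|f(u(\xi))\|\le 2|\Omega|^{1/2}$ uniformly in time, so that \eqref{eq01} reduces to a fractional diffusion equation with uniformly bounded right-hand side, for which the smoothing estimate $\|\partial_\xi u(\xi)\|\le C$ holds with $C$ independent of the large base time; combining this with $(t-\tau)\le(t-\tau)^\alpha$ for $t-\tau\in[0,1]$ recovers the $\alpha$-H\"older bound. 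I expect this upgrade of Proposition \ref{theorem1} to a time-uniform regularity statement—leveraging the $L^\infty$ control from (1)—to be the only genuine difficulty, the integration and the elementary power inequalities being straightforward.
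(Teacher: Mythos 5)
Your part (1) is fine and matches the paper in spirit: the authors likewise dispose of the maximum principle by citation (to \cite{tang2019energy} rather than \cite{du2020time}), and the confinement mechanism you record is the standard one. The gap is in part (2), at exactly the place you flag as ``the only genuine difficulty'': you assert, but do not prove, that for large base times the smoothing estimate $\|\partial_\xi u(\xi)\|\le C$ holds with $C$ independent of $\xi$. This is not a routine upgrade of Proposition \ref{theorem1}. That proposition gives $\|\partial_t u(t)\|\le C(T)\,t^{\alpha-1}$ on $(0,T]$ only, and the time-fractional problem has memory: in the Duhamel representation the derivative of the history term involves the kernel $\frac{\mathrm d}{\mathrm d s}\bigl(s^{\alpha-1}E_{\alpha,\alpha}(-\lambda_k s^{\alpha})\bigr)=s^{\alpha-2}E_{\alpha,\alpha-1}(-\lambda_k s^{\alpha})$, which is non-integrable at $s=0$, so a uniform-in-time pointwise bound on $\|\partial_t u\|$ does not follow from the uniform bound on $\|f(u)\|$ alone; it would require a separate (Gronwall-type or resolvent) argument that you have not supplied, and whose constant must be shown not to grow with the base time. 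Since the whole content of (2) beyond Proposition \ref{theorem1} is precisely the uniformity of the constant in $t$, this missing step is the theorem, not a technicality.

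The paper's proof avoids differentiating in time altogether. It writes the solution in the eigenfunction/Mittag-Leffler form of Sakamoto--Yamamoto, splits $\|u(t+h)-u(t)\|$ into three pieces, and estimates the delicate history piece by
\begin{equation*}
\int_0^t\int_{t-\tau}^{t+h-\tau}s^{\alpha-2}\,\mathrm{d}s\,\mathrm{d}\tau=\frac{h^{\alpha}+t^{\alpha}-(t+h)^{\alpha}}{\alpha(1-\alpha)}\le\frac{h^{\alpha}}{\alpha(1-\alpha)},
\end{equation*}
i.e.\ the subadditivity $(t+h)^{\alpha}\ge t^{\alpha}$... more precisely $t^\alpha+h^\alpha\ge (t+h)^\alpha$, which you also invoke, but applied \emph{after} integrating the singular kernel over the history, where the $t$-dependence cancels. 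The uniform bound $\|f(u)\|_{L^\infty(L^2)}<\infty$ from the maximum principle then closes the estimate with a constant independent of $t$. If you want to salvage your route, you would need to actually prove the time-uniform derivative bound (or, more naturally, adopt the representation-formula argument); as written, the proof is incomplete at its central step.
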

\begin{proof}
See Appendix \ref{append3} for details.
\end{proof}

\begin{lem}\label{lem-4.1}
Let $v\in C^\alpha([0,\infty);L^2(\Omega))$.
If
\begin{itemize}
\item $\exists M>0$, s.t. $\|v(t,\cdot)\| \leq M$;
\item $\exists C>0$, s.t. $\|v(t,\cdot)-v(\tau,\cdot)\| \leq C |t-\tau|^\alpha$ for any $t,\tau\ge 0$;
\item $\|v(t,\cdot) -v_\infty\|\xrightarrow{t\rightarrow +\infty} 0$ for some $v_\infty(x)$,
\end{itemize}
we then have
\begin{equation}
\int_0^t \frac{\|v(t)-v(\tau)\|^2}{(t-\tau)^{\alpha+1}} \, {\mathrm d}\tau\rightarrow 0,\quad\mbox{as } t\to \infty.
\end{equation}
\end{lem}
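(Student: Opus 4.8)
The plan is to split the integral into three regions according to the position of $\tau$ relative to the endpoint $t$, and to control each region with a different one of the three hypotheses. The integrand is singular at $\tau = t$, but it is integrable there thanks to the Hölder bound; for $\tau$ away from $t$ the smallness comes either from the decay of the kernel $(t-\tau)^{-(\alpha+1)}$ or from the convergence $v(\tau)\to v_\infty$. The subtle point is the \emph{order} in which the three cutoffs are chosen, and that is the one thing the argument must get right.

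Concretely, fix $\epsilon>0$. First I would treat the singular piece $\int_{t-\delta}^{t}$. Using $\|v(t)-v(\tau)\|^2 \le C^2(t-\tau)^{2\alpha}$, the integrand is dominated by $C^2(t-\tau)^{\alpha-1}$, whose integral over $[t-\delta,t]$ equals $C^2\delta^\alpha/\alpha$ — a bound uniform in $t$. Choosing $\delta$ small makes this $<\epsilon/3$, and $\delta$ is then frozen for the remainder of the proof. Next, split the leftover $\int_0^{t-\delta}$ at a fixed large time $T_0$. On the bulk region $[T_0,\,t-\delta]$ both $\tau$ and $t$ exceed $T_0$, so the triangle inequality and the convergence hypothesis give $\|v(t)-v(\tau)\|\le 2\eta$ whenever $\|v(s)-v_\infty\|<\eta$ for all $s\ge T_0$. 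The kernel integrates to at most $1/(\alpha\delta^\alpha)$ via the substitution $s=t-\tau$, so this region contributes at most $4\eta^2/(\alpha\delta^\alpha)$; since $\delta$ is already fixed, I pick $T_0$ large enough (by convergence) that $\eta$ is small and this term is $<\epsilon/3$.

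Finally, on the near-origin region $[0,T_0]$ the numerator is bounded by $(2M)^2$ through the boundedness hypothesis, while $(t-\tau)^{-(\alpha+1)}\le (t-T_0)^{-(\alpha+1)}$, yielding the bound $4M^2T_0/(t-T_0)^{\alpha+1}$, which tends to $0$ as $t\to\infty$ with $T_0$ fixed; hence it is $<\epsilon/3$ for all sufficiently large $t$. Summing the three estimates gives the integral $<\epsilon$ for $t$ large, which is the claim.

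The main obstacle is the bookkeeping of the three scales rather than any single estimate. The singular cutoff $\delta$ must be chosen \emph{before} $T_0$, because the bulk bound carries the factor $\delta^{-\alpha}$ that would blow up if $\delta$ were sent to zero afterwards; and $T_0$ must be chosen before letting $t\to\infty$, since the near-origin bound degrades as $T_0$ grows. Getting this nested dependence $\epsilon\to\delta\to T_0\to t$ in the correct order is the only genuinely delicate point, as each individual estimate is elementary.
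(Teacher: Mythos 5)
Your proof is correct and follows essentially the same strategy as the paper's: a three-region splitting of the integral in which the singularity at $\tau=t$ is controlled by the H\"older bound, the region near $\tau=0$ by the uniform bound $M$, and the intermediate region by the convergence to $v_\infty$, with the same nesting of parameter choices (singular cutoff first, then the large-time cutoff, then $t\to\infty$). The only cosmetic difference is that the paper places both cut points at fixed distances $t_1,t_2$ from $t$ (so its ``far'' bound $M^2t_2^{-\alpha}/\alpha$ is uniform in $t$), whereas you fix the left cut at an absolute time $T_0$ and let that contribution vanish as $t\to\infty$; both are valid.
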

\begin{proof}
Given an arbitrary small value $\delta>0$, it is sufficient to show that there exists some $t_*>0$ s.t.
\begin{equation}
\int_0^t \frac{\|v(t)-v(\tau)\|^2}{(t-\tau)^{\alpha+1}}  \, {\mathrm d}\tau
= \int_0^t \frac{\|v(t)-v(t-s)\|^2}{s^{\alpha+1}}  \, {\mathrm d}s
< \delta,\quad\mbox{when } t> t_*.
\end{equation}

Firstly, there exists sufficiently small $t_1>0$ such that
\begin{equation}
\int_0^{t_1} \frac{\|v(t)-v(t-s)\|^2}{s^{\alpha+1}}\,{\mathrm d}s\leq C^2 \int_0^{t_1} s^{\alpha-1} \,{\mathrm d}s = \frac{C^2 }{\alpha} t_1^{\alpha} <\frac{\delta}{3}.
\end{equation}
Secondly, there exists sufficiently large $t_2>0$ such that
\begin{equation}
\int_{t_2}^{t} \frac{\|v(t)-v(t-s)\|^2}{s^{\alpha+1}}\,{\mathrm d}s
\leq M^2 \int_{t_2}^{\infty} s^{-1-\alpha} \,{\mathrm d}s = \frac{M^2 }{\alpha} t_2^{-\alpha} <\frac{\delta}{3}.
\end{equation}
Note that $t_1$ and $t_2$ are fixed now.
Since $v(t)\rightarrow v_*$, there exists some large $t_3>0$ s.t.
\begin{equation}
\|v(t,\cdot)-v_\infty\| < \delta_1 \coloneqq \sqrt{\frac{\alpha \delta}{12(t_2^{-\alpha}-t_1^{-\alpha})}},\quad \forall t> t_3.
\end{equation}
As a consequence, when $t>t_* = t_2+t_3$, we then have
\begin{equation}
\int_{t_1}^{t_2} \frac{\|v(t)-v(t-s)\|^2}{s^{\alpha+1}}\,{\mathrm d}s < \int_{t_1}^{t_2} \frac{(2\delta_1)^2}{s^{\alpha+1}}\,{\mathrm d}s = (2\delta_1)^2 \alpha^{-1} \left(t_2^{-\alpha} -t_1^{-\alpha}\right) = \frac\delta 3.
\end{equation}
Summing the above inequalities, we conclude that when $t>t_* = t_2+t_3$,
\begin{equation}
\int_{0}^{t} \frac{\|v(t)-v(t-s)\|^2}{s^{\alpha+1}}\,{\mathrm d}s <\delta.
\end{equation}
The proof is completed.
\end{proof}

\begin{prop}[asymptotic analysis]
\label{Prop-ACEt} 
Let $\tilde E(t)$ and $E(t)$ be defined by \eqref{modifiedEac} and \eqref{glenergy}. 
Assume that $u_0\in H^2(\Omega)\bigcap H_0^1(\Omega)$ and $u(t)$ is a global bounded solution of \eqref{eq01}.  Then, for any $\alpha\in(0,1)$, we have
\begin{align}
\tilde E(0) = E(0).\label{AC-prop1} 
\end{align}
Moreover, assuming that $u(t)$ converges to some steady state $u_\infty$  strongly in $L^2(\Omega)$,
then we have
\begin{align}
\tilde E(t)\xrightarrow{t\rightarrow +\infty} E(t).\label{AC-prop2}
\end{align}
In addition, for any $ t\in(0,\infty)$,
\begin{align}
\tilde E(t)\xrightarrow{\alpha\rightarrow 1} E(t).\label{AC-prop3}
\end{align}
\end{prop}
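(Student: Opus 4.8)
The plan is to reduce all three assertions to the single identity
$\tilde E(t)-E(t)=\frac{1}{\gamma\Gamma(1-\alpha)}D_{\alpha}(t)$, which is immediate from the
definition \eqref{modifiedEac}. Since $\gamma$ is a fixed positive constant, each claim amounts
to controlling the limiting behaviour of $\frac{1}{\Gamma(1-\alpha)}D_{\alpha}(t)$ in the relevant
regime, so I would isolate the nonnegative term $D_{\alpha}(t)$ from \eqref{energy-diss-1} and
analyze its two pieces separately using the maximum principle and H\"older continuity of
Lemma \ref{lem_Holder} together with the estimates already produced in the proof of
Lemma \ref{lemma1111}.

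For \eqref{AC-prop1} I would let $t\to 0^{+}$ with $\alpha$ fixed. Using
$\|u(t)-u(\tau)\|\le C|t-\tau|^{\alpha}$ from Lemma \ref{lem_Holder} (equivalently the bound
\eqref{well-4}), the first term of $D_{\alpha}$ is dominated by $\frac{C^{2}}{2}t^{\alpha}$ and the
second by $\frac{\alpha C^{2}}{2}\int_{0}^{t}(t-\tau)^{\alpha-1}\,{\mathrm d}\tau=\frac{C^{2}}{2}t^{\alpha}$.
Hence $D_{\alpha}(t)\le C^{2}t^{\alpha}\to 0$, giving $\tilde E(0)=E(0)$ for every $\alpha\in(0,1)$.
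For \eqref{AC-prop2} I would send $t\to\infty$. The first term $\frac{\|u(t)-u(0)\|^{2}}{2t^{\alpha}}$
vanishes because the maximum principle forces $\|u(t)-u(0)\|\le 2|\Omega|^{1/2}$ while
$t^{\alpha}\to\infty$; the second term is exactly the integral treated in Lemma \ref{lem-4.1}, so I
would apply that lemma with $v=u$, its three hypotheses being supplied respectively by the uniform
$L^{2}$ bound from the maximum principle, the H\"older estimate of Lemma \ref{lem_Holder}, and the
assumed strong convergence $u(t)\to u_{\infty}$ in $L^{2}(\Omega)$. This yields $D_{\alpha}(t)\to 0$
and hence \eqref{AC-prop2}.

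For \eqref{AC-prop3} I would fix $t\in(0,\infty)$ and let $\alpha\to 1^{-}$. The decisive mechanism
is that $\Gamma(1-\alpha)\to+\infty$, so the prefactor $\frac{1}{\gamma\Gamma(1-\alpha)}\to 0$, and it
then suffices to show $D_{\alpha}(t)$ stays bounded as $\alpha\uparrow 1$. The first term is harmless
since $t^{\alpha}$ is bounded away from $0$ for fixed $t$. For the integral I would reuse the
computation \eqref{well-2}, reducing it to
$\frac{Ct^{\alpha-1}}{\alpha}\int_{0}^{1}\frac{(1-s^{\alpha})^{2}}{(1-s)^{\alpha+1}}\,{\mathrm d}s$,
whose integrand behaves like $(1-s)^{1-\alpha}$ near $s=1$ and stays integrable as $\alpha\to 1$.

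The main obstacle is making the last estimate \emph{uniform} in $\alpha$: both the regularity
constant $C$ in $\|\partial_t u\|\le Ct^{\alpha-1}$ from Proposition \ref{theorem1} and the value of
$\int_{0}^{1}\frac{(1-s^{\alpha})^{2}}{(1-s)^{\alpha+1}}\,{\mathrm d}s$ depend on $\alpha$, so I must
verify they remain bounded for $\alpha$ in a left-neighbourhood of $1$. I expect this to follow by
splitting $\int_{0}^{t}$ at a fixed point $\tau=t-\eta$: on $[0,t-\eta]$ the denominator is bounded
below and the uniform $L^{2}$ bound $\|u(t)-u(\tau)\|\le 2|\Omega|^{1/2}$ controls the bulk, while on
the shrinking window $[t-\eta,t]$ the H\"older estimate keeps the singular contribution bounded, the
two pieces together giving a bound on $D_{\alpha}(t)$ that is uniform for $\alpha$ near $1$. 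Once this
boundedness is secured, \eqref{AC-prop3} follows from $\frac{1}{\Gamma(1-\alpha)}\to 0$.
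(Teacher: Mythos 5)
Your proposal follows essentially the same route as the paper: it splits $D_{\alpha}$ into the same two terms, uses the H\"older continuity of Lemma \ref{lem_Holder} to handle $t\to 0$, the maximum principle together with Lemma \ref{lem-4.1} to handle $t\to\infty$, and the fact that $1/\Gamma(1-\alpha)\to 0$ to handle $\alpha\to 1$. The only difference is that you explicitly flag (and sketch a fix for) the uniformity in $\alpha$ of the bound on $D_{\alpha}(t)$ near $\alpha=1$, a point the paper's proof passes over by simply asserting that $T_1(t)$ and $T_2(t)$ are bounded.
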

\begin{proof}
Firstly, we denote
  \begin{equation}
    T_1(t)= \frac{\|u(t,\cdot)-u(0,\cdot)\|^2}{2\gamma t^\alpha},\quad T_2(t) = \frac{\alpha}{2\gamma} \int_0^t\frac{\|u(t,\cdot)-u(\tau,\cdot)\|^2}{{(t-\tau)^{\alpha+1}}}d\tau, \notag
  \end{equation}
  so that $\tilde E = E+\frac1{\Gamma(1-\alpha)}(T_1+T_2)$.
According to Lemma \ref{lem_Holder}, $u\in C^\alpha([0,\infty); L^2(\Omega))$, i.e., 
 \begin{equation}
\sup_{t,\tau\in[0,\infty)}\frac{\|u(t,\cdot)-u(\tau,\cdot)\|}{|t-\tau|^\alpha}<\infty.
 \end{equation}
Thus $T_1(t)$  and $T_2(t)$ are continuous at $t=0$. 
The result \eqref{AC-prop1} then follows. 
 
Secondly, since $u(t)$ is a global bounded solution of \eqref{eq01},  we have $T_1(t)\to 0$ as $t\to \infty$. 
According to Lemma \ref{lem-4.1}, we have $T_2(t)\to 0$ as $t\to \infty$. 
Thus, the result \eqref{AC-prop2} is proved. 

Finally \eqref{AC-prop3} holds true, because for any fixed $t\in(0,T)$, $T_1(t)$ and $T_2(t)$ are bounded, and $\frac{1}{\Gamma(1-\alpha)}\rightarrow 0$ as $\alpha\rightarrow1$.
\end{proof}

\section{Numerical schemes}\label{sect4}

\subsection{L1-IMEX scheme with stabilization}\label{sect4.1}
We adopt the L1 approximation \cite{sun2006fully,lin2007finite} of Caputo derivative that appears on the left-hand side in \eqref{eq01}. 
For simplicity, we consider uniform time mesh here.
Let $\triangle t= \frac T N$ be the time step size and $t_n = n\triangle t, ~0\leq n \leq N $. 
The L1 approximation is written as
\begin{equation}
  \overline\partial_t^\alpha u^n \coloneqq \sum_{k = 1}^n b_{n-k}\left(u^k-u^{k-1}\right),\label{eqdisCapdef}
\end{equation}
where $\overline\partial_t^\alpha$ is the discrete fractional derivative with coefficients
\begin{equation}
  b_{k} =\frac{1}{\Gamma(2-\alpha)\triangle t^{\alpha}}\left[(k+1)^{1-\alpha}-k^{1-\alpha}\right],\quad  0\leq k\leq n-1.
\end{equation}
\eqref{eqdisCapdef} can be recast as
\begin{equation}
  \overline\partial_t^\alpha u^n = \sum_{k = 1}^{n-1} {(b_{n-k-1}-b_{n-k})(u^n-u^k)} + b_{n-1}(u^n-u^0),\label{eqdisCapdef-new}.
\end{equation}
We propose the following lemma on the L1 operator $\overline\partial_t^\alpha$.

\begin{lem} \label{resultofdiscretelemma}
For the L1 approximation \eqref{eqdisCapdef}, the following inequality hold:
  \begin{align}
    \langle\overline\partial_t^\alpha u^n &,u^n-u^{n-1}\rangle\geq D^n-D^{n-1},
    \end{align}
  where
  \begin{equation}
  D^n =     \left\{
  \begin{aligned}
      &0, &&n=0,\\
      &\frac12 b_{0} \|u^1-u^0\|^2, &&n=1,\\
      &\frac12\sum_{k = 1}^{n-1} (b_{n-k-1}-b_{n-k})\|u^n-u^k\|^2 + \frac12 b_{n-1} \|u^n-u^0\|^2, &&n\geq2.
  \end{aligned}
  \right.
  \end{equation}
\end{lem}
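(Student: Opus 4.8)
The plan is to prove this as the fully discrete counterpart of Lemma \ref{lemma1111}, with the recast form \eqref{eqdisCapdef-new} playing the role of the integration-by-parts identity \eqref{newdef} and the ``summation-by-parts'' replaced by an elementary polarization inequality. The whole argument rests on two structural facts about the coefficients $b_k$: writing $g(x)=x^{1-\alpha}$, one has $b_k=\frac{1}{\Gamma(2-\alpha)\triangle t^\alpha}\bigl(g(k+1)-g(k)\bigr)$, and since $0<1-\alpha<1$ the function $g$ is increasing and concave with $g'''>0$. From this I would record, for later use, that $(b_k)$ is (i) positive and strictly decreasing, i.e. $b_{k}>b_{k+1}>0$, and (ii) convex, i.e. $b_k-2b_{k+1}+b_{k+2}\ge 0$; the latter holds because $b_k-2b_{k+1}+b_{k+2}$ is a positive multiple of the third forward difference $g(k+3)-3g(k+2)+3g(k+1)-g(k)=g'''(\xi)>0$.

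The core estimate is the polarization inequality $\langle a-c,\,a-b\rangle=\tfrac12(\|a-c\|^2-\|b-c\|^2+\|a-b\|^2)\ge \tfrac12(\|a-c\|^2-\|b-c\|^2)$, which I would apply with $a=u^n$, $b=u^{n-1}$, $c=u^k$ to obtain
\[
\langle u^n-u^k,\,u^n-u^{n-1}\rangle\ \ge\ \tfrac12\bigl(\|u^n-u^k\|^2-\|u^{n-1}-u^k\|^2\bigr).
\]
Starting from the recast form \eqref{eqdisCapdef-new}, taking the inner product with $u^n-u^{n-1}$, and applying this inequality to each summand --- which preserves the inequality because every coefficient $b_{n-k-1}-b_{n-k}$ and $b_{n-1}$ is nonnegative by property (i) --- one finds
\[
\langle\overline\partial_t^\alpha u^n,\,u^n-u^{n-1}\rangle\ \ge\ D^n-\widetilde D^{\,n-1},
\]
where the ``forward'' terms in $\|u^n-u^k\|^2$ reassemble exactly into $D^n$, and $\widetilde D^{\,n-1}=\tfrac12\sum_{k=1}^{n-1}(b_{n-k-1}-b_{n-k})\|u^{n-1}-u^k\|^2+\tfrac12 b_{n-1}\|u^{n-1}-u^0\|^2$.

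The main obstacle --- and the point where the telescoping is not automatic --- is that $\widetilde D^{\,n-1}$ is \emph{not} equal to $D^{n-1}$: it carries the level-$n$ coefficients rather than the level-$(n-1)$ ones. I would dispose of this by showing $\widetilde D^{\,n-1}\le D^{n-1}$ termwise. The $k=n-1$ summand of $\widetilde D^{\,n-1}$ vanishes, and comparing the remaining coefficients against those of $D^{n-1}=\tfrac12\sum_{k=1}^{n-2}(b_{n-k-2}-b_{n-k-1})\|u^{n-1}-u^k\|^2+\tfrac12 b_{n-2}\|u^{n-1}-u^0\|^2$ reduces to the two inequalities $b_{n-k-1}-b_{n-k}\le b_{n-k-2}-b_{n-k-1}$ for $1\le k\le n-2$ and $b_{n-1}\le b_{n-2}$; the first is precisely the convexity (ii) and the second the monotonicity (i).

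Chaining the two displayed inequalities then gives $\langle\overline\partial_t^\alpha u^n,u^n-u^{n-1}\rangle\ge D^n-\widetilde D^{\,n-1}\ge D^n-D^{n-1}$, as desired. The base case $n=1$, where $\overline\partial_t^\alpha u^1=b_0(u^1-u^0)$ yields $\langle\overline\partial_t^\alpha u^1,u^1-u^0\rangle=b_0\|u^1-u^0\|^2\ge\tfrac12 b_0\|u^1-u^0\|^2=D^1-D^0$, is immediate, and the unified comparison above already covers $n=2$ (the sum being empty and only the monotonicity $b_1\le b_0$ being invoked), which completes the argument.
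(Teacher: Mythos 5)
Your proof is correct and follows essentially the same route as the paper's: expand via the recast form \eqref{eqdisCapdef-new}, apply the polarization inequality $\langle a-c,a-b\rangle\ge\tfrac12(\|a-c\|^2-\|b-c\|^2)$ to each summand, and then bound the resulting ``level-$n$ weighted'' sum $\widetilde D^{\,n-1}$ by $D^{n-1}$ using the monotonicity and convexity of the $b_k$. The only difference is that you explicitly justify the convexity $b_k-2b_{k+1}+b_{k+2}\ge 0$ via the third difference of $g(x)=x^{1-\alpha}$, a standard property of the L1 weights that the paper uses without comment.
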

  \begin{proof} In the case of $n = 1$, we have
    \begin{align}
      \langle\overline\partial_t^\alpha u^1 ,u^1-u^{0}\rangle\geq  \frac12 b_{0} \|u^1-u^0\|^2 - 0= D^1-D^{0}.
    \end{align}
In the case of $n = 2$, we have
    \begin{align}
      \langle\overline\partial_t^\alpha u^2 ,u^2-u^1 \rangle \geq  \frac12(b_0-b_1)\|u^2-u^1\|^2+\frac12b_1\|u^2-u^1\| - \frac12b_1\|u^1-u^0\|^2\notag\\
      \geq\frac12(b_0-b_1)\|u^2-u^1\|^2+\frac12b_1\|u^2-u^1\| - \frac12b_0\|u^1-u^0\|^2 = D^2-D^1.
    \end{align}
In the case of $n\geq3$, multiplying \eqref{eqdisCapdef-new} by $u^n-u^{n-1}$ and integrating over $\Omega$, we have
    \begin{align}
      & \langle\overline\partial_t^\alpha u^n ,u^n-u^{n-1}\rangle \notag\\
      &= \sum_{k = 1}^{n-1} (b_{n-k-1}-b_{n-k})\langle u^n-u^k,u^n-u^k-(u^{n-1}-u^k)\rangle\notag\\
      & \quad + b_{n-1} \langle u^n-u^0,u^n-u^0-(u^{n-1}-u^0)\rangle\notag\\
     & \geq \frac12\sum_{k = 1}^{n-1} (b_{n-k-1}-b_{n-k})\|u^n-u^k\|^2 -\frac12\sum_{k = 1}^{n-2} (b_{n-k-1}-b_{n-k})\|u^{n-1}-u^k\|^2\notag\\
     & \quad + \frac 12 b_{n-1}  \|u^{n}-u^0\|^2 - \frac 12 b_{n-1} \|u^{n-1}-u^0\|^2 \notag\\
      &\geq \frac12\sum_{k = 1}^{n-1} (b_{n-k-1}-b_{n-k})\|u^n-u^k\|^2 -\frac12\sum_{k = 1}^{n-2} (b_{n-k-2}-b_{n-k-1})\|u^{n-1}-u^k\|^2 \notag\\
      & \quad + \frac 12 b_{n-1}  \|u^{n}-u^0\|^2 - \frac 12 b_{n-2} \|u^{n-1}-u^0\|^2 \notag\\
      & = D^n-D^{n-1}.
    \end{align}
\end{proof}

We adopt the L1 approximation for the Caputo derivative and use the stabilization technique for the bulk force term. 
Consider the following semidiscrete L1 implicit-explicit scheme for equation \eqref{eq01}:
\begin{align}
  \overline\partial_t^\alpha u(t_n)= \gamma\left(\varepsilon^2\Delta u^{n} - f(u^{n-1})-S(u^n-u^{n-1})\right),\label{schemeac}
\end{align}
where $S\geq 0$ is some stabilization constant. 

\begin{thm}[modified energy dissipation]
 If $S\geq2$ and $\|u^0\|_\infty\leq 1$, then the scheme \eqref{schemeac} is unconditionally energy stable and satisfies a modified energy dissipation law
  \begin{align}
    \tilde E(u^n)\leq\tilde E(u^{n-1}) \quad \forall  n\geq 1,
  \end{align}
  where
  \begin{align}\label{dis-eac}
    \tilde E(u^n) &=  E(u^n)+\frac1{2\gamma}\sum_{k = 1}^{n-1} (b_{n-k-1}-b_{n-k})\Vert u^n-u^k\Vert^2 +\frac1{2\gamma} b_{n-1}\Vert u^n-u^0\Vert^2 .
  \end{align}
\end{thm}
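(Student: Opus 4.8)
The plan is to mirror the continuous argument of Theorem~\ref{theorem_of_ac} on the discrete level, replacing the test function $\partial_t u$ by the time increment $u^n-u^{n-1}$. Testing the scheme \eqref{schemeac} against $u^n-u^{n-1}$ in $L^2(\Omega)$ gives
\begin{equation*}
\langle \overline\partial_t^\alpha u^n,\,u^n-u^{n-1}\rangle = \gamma\big\langle \varepsilon^2\Delta u^n-f(u^{n-1})-S(u^n-u^{n-1}),\,u^n-u^{n-1}\big\rangle.
\end{equation*}
The left-hand side is handled immediately by Lemma~\ref{resultofdiscretelemma}, which yields $\langle \overline\partial_t^\alpha u^n,u^n-u^{n-1}\rangle \ge D^n-D^{n-1}$, where $D^n$ is precisely $\gamma$ times the memory part of the modified energy \eqref{dis-eac}. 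This is the discrete counterpart of absorbing the Caputo-derivative terms into $D_\alpha(t)$, and it requires no extra work.

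It remains to bound the right-hand side above by $-\gamma\big(E(u^n)-E(u^{n-1})\big)$. For the diffusion contribution I would apply discrete integration by parts together with the polarization identity $\langle a,a-b\rangle=\tfrac12\big(\|a\|^2-\|b\|^2+\|a-b\|^2\big)$ with $a=\nabla u^n$, $b=\nabla u^{n-1}$, which gives $-\tfrac{\gamma\varepsilon^2}{2}\big(\|\nabla u^n\|^2-\|\nabla u^{n-1}\|^2\big)$ plus the nonpositive surplus $-\tfrac{\gamma\varepsilon^2}{2}\|\nabla(u^n-u^{n-1})\|^2$. For the bulk term I would use the pointwise Taylor expansion $F(u^n)-F(u^{n-1})=f(u^{n-1})(u^n-u^{n-1})+\tfrac12 f'(\xi)(u^n-u^{n-1})^2$ for some $\xi$ between $u^{n-1}$ and $u^n$; after integrating over $\Omega$ the explicit force term becomes $-\gamma\int_\Omega\big(F(u^n)-F(u^{n-1})\big)\,\mathrm dx$ plus a quadratic defect $\tfrac{\gamma}{2}\int_\Omega f'(\xi)(u^n-u^{n-1})^2\,\mathrm dx$. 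Grouping this defect with the stabilization term leaves $\gamma\int_\Omega\big(\tfrac12 f'(\xi)-S\big)(u^n-u^{n-1})^2\,\mathrm dx$, which is nonpositive as soon as $f'(\xi)\le 2S$.

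The main obstacle, and the reason the hypotheses $S\ge 2$ and $\|u^0\|_\infty\le 1$ are imposed, is the pointwise control of $f'(\xi)=3\xi^2-1$: since $\xi$ lies between $u^{n-1}$ and $u^n$, I must first establish the discrete maximum principle $\|u^n\|_\infty\le 1$ for every $n$, which I would prove by induction on $n$. Using the reformulation \eqref{eqdisCapdef-new}, the scheme rearranges to
\begin{equation*}
(b_0+\gamma S)u^n-\gamma\varepsilon^2\Delta u^n = \sum_{k=1}^{n-1}(b_{n-k-1}-b_{n-k})u^k+b_{n-1}u^0+\big(\gamma S u^{n-1}-\gamma f(u^{n-1})\big),
\end{equation*}
where the coefficients $b_{n-k-1}-b_{n-k}$ and $b_{n-1}$ are nonnegative and telescope, together, to $b_0$. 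Under the inductive hypothesis $|u^k|\le 1$ for $k<n$, the map $v\mapsto \gamma S v-\gamma f(v)$ is nondecreasing on $[-1,1]$ because its derivative $\gamma(S-f'(v))\ge\gamma(S-2)\ge 0$ there, so $\gamma S u^{n-1}-\gamma f(u^{n-1})\le \gamma S$ and the whole right-hand side is bounded above by $b_0+\gamma S$. Evaluating at an interior maximum of $u^n$, where $-\Delta u^n\ge 0$, forces $(b_0+\gamma S)\max u^n\le b_0+\gamma S$, i.e. $\max u^n\le 1$, and symmetrically $\min u^n\ge-1$ (the homogeneous boundary values lie in $[-1,1]$). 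This is exactly where $S\ge 2$ is needed, whereas the energy defect alone would only require $S\ge 1$.

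Granting $\|u^n\|_\infty\le 1$, one has $|\xi|\le 1$ and therefore $f'(\xi)\le 2\le 2S$, so the quadratic defect is absorbed by the stabilization and the right-hand side is indeed $\le -\gamma\big(E(u^n)-E(u^{n-1})\big)$. Combining this with $\langle \overline\partial_t^\alpha u^n,u^n-u^{n-1}\rangle\ge D^n-D^{n-1}$ and the equality obtained from testing the scheme gives $D^n-D^{n-1}\le -\gamma\big(E(u^n)-E(u^{n-1})\big)$; dividing by $\gamma$ and recalling $\tilde E(u^n)=E(u^n)+\tfrac1\gamma D^n$ from \eqref{dis-eac} yields $\tilde E(u^n)\le\tilde E(u^{n-1})$. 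No restriction on $\triangle t$ was used, so the scheme is unconditionally energy stable. I expect the discrete maximum principle to be the delicate part, both because it drives the threshold $S\ge 2$ and because it must be carried as a standing inductive hypothesis throughout the estimate.
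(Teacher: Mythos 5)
Your proposal is correct and follows essentially the same route as the paper: test the scheme \eqref{schemeac} with $u^n-u^{n-1}$, control the memory term via Lemma \ref{resultofdiscretelemma}, and treat the right-hand side with the polarization identity plus a Taylor expansion of $F$, absorbing the quadratic defect $\tfrac12 f'(\xi^n)(u^n-u^{n-1})^2$ into the stabilization term. The only divergence is that you prove the discrete maximum principle $\|u^n\|_\infty\le 1$ by induction (rearranging \eqref{eqdisCapdef-new}, using the nonnegativity and telescoping of the coefficients, and evaluating at interior extrema), whereas the paper simply cites it from \cite{tang2019energy}; your inline argument is the standard one and is sound, and your observation that $S\ge 2$ is forced by the maximum principle while the energy defect alone needs only $S\ge 1$ is consistent with the paper, whose stated bound $|f'(\xi^n)|\le 4$ is loose (on $[-1,1]$ one actually has $|f'|\le 2$) but suffices since $2S\ge 4$.
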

\begin{proof}
It is already known (cf. \cite{tang2019energy}) that in the case of $S\geq 2$, the maximal principle holds, i.e., $\|u^n\|_\infty \leq 1$.
Multiplying \eqref{schemeac} by $(u^{n}-u^{n-1})$ and integrating the equation over $\Omega$, we have
  \begin{align}
    \frac1\gamma\langle\overline\partial_t^\alpha u(t_n), u^n-u^{n-1}\rangle = \langle \varepsilon^2\Delta u^{n} -f(u^{n-1})-S(u^n-u^{n-1}),u^n-u^{n-1}\rangle\label{equation4.1}
  \end{align}
 Denote the left-hand side and the right-hand side of \eqref{equation4.1} by (LHS) and (RHS) respectively.
Then, we have, for $n\geq1$,
\begin{align}
(\mathrm{RHS}) =& \langle \varepsilon^2\Delta u^{n} -f(u^{n-1})-S(u^n-u^{n-1}),u^n-u^{n-1}\rangle\notag\\
=&-\frac{\varepsilon^2}2\Vert\nabla u^n\Vert^2+\frac{\varepsilon^2}2\Vert\nabla u^{n-1}\Vert^2-\frac{\varepsilon^2}2\Vert\nabla u^n-\nabla u^{n-1}\Vert^2\notag\\
&-\langle f(u^{n-1})(u^n-u^{n-1})+S(u^n-u^{n-1})^2,1\rangle\notag\\
=&-\frac{\varepsilon^2}2\Vert\nabla u^n\Vert^2+\frac{\varepsilon^2}2\Vert\nabla u^{n-1}\Vert^2-\frac{\varepsilon^2}2\Vert\nabla u^n-\nabla u^{n-1}\Vert^2\notag\\
&-\langle F(u^n)-F(u^{n-1})+(S-\frac12f'(\xi^n))(u^n-u^{n-1})^2,1\rangle\label{theoremacresultp1}.
\end{align}
According to Lemma \ref{resultofdiscretelemma}, we have
\begin{align}
  (\mathrm{LHS})  = \frac1\gamma\langle\overline\partial_t^\alpha u(t_n) ,u^n-u^{n-1}\rangle \geq \frac1\gamma(D^n-D^{n-1}).\label{theoremacresultp2}
\end{align}
As a result, combining \eqref{theoremacresultp1} and \eqref{theoremacresultp2}, we have
\begin{align}
\tilde E(u^n)-\tilde E(u^{n-1}) \leq -\langle(S-\frac12f'(\xi^n))(u^n-u^{n-1})^2,1\rangle\leq 0,
\end{align}
where we use the fact that $|f'(\xi^n)| = |3(\xi^n)^2-1|\leq 4$, $0<\xi^n<1$.
\end{proof}
{\begin{rem}
 For the discrete modified energy $\tilde E(u^n)$ defined in \eqref{dis-eac}, it is easy to check on the discrete level that \eqref{AC-prop1} and \eqref{AC-prop3} in Proposition \ref{Prop-ACEt} hold. 
Furthermore, later numerical results show that \eqref{AC-prop2} is true on the discrete level. 
\end{rem}}

\subsection{L2-IMEX scheme with stabilization}\label{sect4.2}
We study the decreasing energy bound of higher-order schemes for the TFAC equation.
Consider the L2 approximation \cite{lv2016error} of time fractional derivative:
\begin{equation}\label{eq:L2}
  \begin{array}{r@{}l}
	\begin{aligned}
	 L_1^\alpha  u  & = &&\frac{1}{\Gamma(2-\alpha)\Delta t^{\alpha}} \left( u^1- u^0\right),\\
 	 L_n^\alpha  u & = &&\frac{1}{\Gamma(3-\alpha)\Delta t^\alpha} {\Bigg \{} \sum_{j=1}^{n-1} \left(a_j u^{n-j-1}+b_j u^{n-j}+c_j u^{n-j+1}\right) \\
	 & &&+  \frac{\alpha}{2} u^{n-2}-2 u^{n-1}+\frac{4-\alpha}{2}u^n {\Bigg\}},\quad n\geq 2,\\	
	\end{aligned}
  \end{array}
\end{equation}
where
\begin{equation}\label{eq:abc}
  \begin{array}{r@{}l}
	\begin{aligned}
	a_j & = -\frac{3}{2}(2-\alpha)(j+1)^{1-\alpha}+\frac 1 2 (2-\alpha)j^{1-\alpha} + (j+1)^{2-\alpha}-j^{2-\alpha}, \\
	b_j & = 2(2-\alpha)(j+1)^{1-\alpha}-2(j+1)^{2-\alpha}+2j^{2-\alpha},\\
	c_j & = -\frac 1 2 (2-\alpha)\left((j+1)^{1-\alpha}+j^{1-\alpha}\right)+(j+1)^{2-\alpha}-j^{2-\alpha}.
	\end{aligned}
  \end{array}
\end{equation}
Note that the relationship
$a_j+b_j+c_j = 0$ holds and the error of the L2 approximation to the Caputa derivative is $O(\Delta t^{3-\alpha})$.
Here, we reformulate \eqref{eq:L2} to be
\begin{equation}\label{eq:L2reform}
  \begin{array}{r@{}l}
	\begin{aligned}
	 L_1^\alpha  u & = \frac{1}{\Gamma(3-\alpha)\Delta t^\alpha} (r_1 + d_1) \delta u^1, \\
 	 L_n^\alpha  u 
	 & = \frac{1}{\Gamma(3-\alpha)\Delta t^\alpha} \bigg\{ \frac{3\alpha}{2} \delta u^n - \frac{\alpha}{2} \delta u^{n-1}    +  \sum_{j=1}^{n} d_j \delta u^{n-j+1}  -c_n \delta u^1 \bigg\}, \quad n \geq 2,
	\end{aligned}
  \end{array}
\end{equation}
where
\begin{equation}
\delta u^j = u^j-u^{j-1},
\end{equation}
\begin{equation}\label{eq:r1}
r_1 = 2-\alpha - d_1 = 2 + \frac 1 2 \alpha - \left(\frac \alpha 2 +1\right) 2^{1-\alpha}>0,
\end{equation}
and
\begin{equation}\label{eq:dj}
d_j = \left\{
  \begin{array}{r@{}l}
	\begin{aligned}
	& c_1 + 2-2\alpha, &&  j = 1,\\
	& c_j - a_{j-1}, && j = 2,\ldots,n. \\
	\end{aligned}
  \end{array}	
  \right.
\end{equation}
See \cite{quan2021energy} for the detailed calculations. 

\begin{lem} \label{lem:L2}
For the L2 approximation \eqref{eq:L2reform}, the following inequality hold:
  \begin{align}
    \langle L_n^\alpha u^n &,\delta u^n\rangle\geq   \tilde D^n-\tilde D^{n-1} + \frac\alpha{\Gamma(3-\alpha)\Delta t^\alpha} \|\delta u^n\|^2, \quad n\geq 2,
    \end{align}
  where
  \begin{equation}\label{eq:Dtilde}
  \begin{aligned}
    \tilde D^n &=  \frac{1}{\Gamma(3-\alpha)\Delta t^\alpha} \bigg [\frac\alpha 4 \| u^n-u^{n-1}\|^2 +
    \frac12\sum_{j = 1}^{n-1} (d_{n-j}-d_{n-j+1})\|u^n-u^j\|^2 \\
    &\qquad\qquad\qquad\qquad+ \frac{c_n}{2} \|u^n-u^1\|^2-\frac{a_n}{2} \|u^{n}-u^0\|^2 \bigg ].
    \end{aligned}
    \end{equation}
\end{lem}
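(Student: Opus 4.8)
The plan is to follow the same route as the proof of Lemma~\ref{resultofdiscretelemma} for the L1 operator, adapted to the richer coefficient structure of the L2 reformulation \eqref{eq:L2reform}. Write $\mu := 1/(\Gamma(3-\alpha)\Delta t^\alpha)$. Taking the inner product of $L_n^\alpha u^n$ with $\delta u^n$ and using \eqref{eq:L2reform}, I would separate the right-hand side into a \emph{local} part $\tfrac{3\alpha}{2}\delta u^n - \tfrac{\alpha}{2}\delta u^{n-1}$, a \emph{memory} sum $\sum_{j=1}^n d_j\,\delta u^{n-j+1}$, and the boundary correction $-c_n\,\delta u^1$, treating the three pieces separately and then recombining.

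For the local part I would use $\langle\delta u^n,\delta u^n\rangle = \|\delta u^n\|^2$ together with the elementary bound $-\langle\delta u^{n-1},\delta u^n\rangle \ge -\tfrac12\|\delta u^{n-1}\|^2 - \tfrac12\|\delta u^n\|^2$. Since $\delta u^n = u^n - u^{n-1}$, this gives $\mu\big(\tfrac{3\alpha}{2}\|\delta u^n\|^2 - \tfrac{\alpha}{2}\langle\delta u^{n-1},\delta u^n\rangle\big) \ge \mu\big(\tfrac{5\alpha}{4}\|\delta u^n\|^2 - \tfrac{\alpha}{4}\|\delta u^{n-1}\|^2\big)$, which is precisely $\mu\alpha\|\delta u^n\|^2$ plus the telescoped difference of the $\tfrac{\alpha}{4}\|u^n-u^{n-1}\|^2$ terms in $\tilde D^n - \tilde D^{n-1}$. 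This pins down exactly where the additional term $\tfrac{\alpha}{\Gamma(3-\alpha)\Delta t^\alpha}\|\delta u^n\|^2$ on the right-hand side originates.

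The heart of the argument is the memory sum, and the key reorganization is to convert the increments $\delta u^j = u^j-u^{j-1}$ into centered differences $u^n-u^k$ by Abel summation, exactly as \eqref{eqdisCapdef} is recast into \eqref{eqdisCapdef-new} in the L1 case. Re-indexing $\sum_{j=1}^n d_j\,\delta u^{n-j+1} = \sum_{k=1}^n d_{n-k+1}(u^k-u^{k-1})$ and summing by parts yields $\sum_{k=1}^{n-1}(d_{n-k}-d_{n-k+1})(u^n-u^k) + d_n(u^n-u^0)$; absorbing $-c_n\delta u^1 = -c_n(u^n-u^0) + c_n(u^n-u^1)$ and invoking the coefficient identities $a_j+b_j+c_j=0$ and $d_j = c_j - a_{j-1}$ from \eqref{eq:dj} rewrites the boundary contribution as multiples of $u^n-u^1$ and $u^n-u^0$. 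Pairing each centered difference against $\delta u^n = (u^n-u^k)-(u^{n-1}-u^k)$ and applying the polarization bound $\langle u^n-u^k,\delta u^n\rangle \ge \tfrac12\big(\|u^n-u^k\|^2 - \|u^{n-1}-u^k\|^2\big)$ turns the paired memory sum into a telescoping expression; after cancellation against the matching boundary terms of $\tilde D^{n-1}$, the boundary pieces carry precisely the coefficients $c_n$ and $-a_n$ of \eqref{eq:Dtilde}, and the bulk sum matches the $\tfrac12\sum_j(d_{n-j}-d_{n-j+1})\|u^n-u^j\|^2$ term.

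The main obstacle is that the polarization step produces a valid \emph{lower} bound only when each coefficient it multiplies has the right sign, and the telescoping against $\tilde D^{n-1}$ closes only when the level-$n$ coefficients are dominated by their level-$(n-1)$ counterparts. Concretely, one needs the successive differences $d_{n-j}-d_{n-j+1}$ to be nonnegative, a discrete convexity ensuring $d_{n-j}-d_{n-j+1} \le d_{n-1-j}-d_{n-j}$ (the exact analogue of the L1 inequality $b_{n-k-1}-b_{n-k}\le b_{n-k-2}-b_{n-k-1}$), and the correct signs of $c_n$ and $a_n$ so that the $\|u^n-u^1\|^2$ and $\|u^n-u^0\|^2$ terms fit into \eqref{eq:Dtilde}. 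These monotonicity, convexity, and sign properties of the L2 weights $a_j,b_j,c_j,d_j$ are the delicate algebraic estimates established in \cite{quan2021energy}; once they are invoked, collecting the local and memory contributions reproduces $\tilde D^n - \tilde D^{n-1} + \tfrac{\alpha}{\Gamma(3-\alpha)\Delta t^\alpha}\|\delta u^n\|^2$ and completes the proof for $n\ge 2$.
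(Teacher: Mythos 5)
Your proposal follows essentially the same route as the paper's proof: the same three-way split into the local part, the memory sum, and the $-c_n\delta u^1$ correction (the paper's $Q_1$, $Q_2$, $Q_3$), the same Young-inequality treatment of $\tfrac{3\alpha}{2}\delta u^n-\tfrac{\alpha}{2}\delta u^{n-1}$ producing the extra $\tfrac{\alpha}{\Gamma(3-\alpha)\Delta t^\alpha}\|\delta u^n\|^2$, the same Abel summation to centered differences followed by the polarization bound and telescoping, and the same appeal to the monotonicity/convexity/sign properties of $a_j,c_j,d_j$ (in particular $d_{n-j}-d_{n-j+1}\le d_{n-j-1}-d_{n-j}$, $c_{n-1}\ge c_n$, and $d_{n-1}-d_n\ge c_{n-1}-c_n$) from \cite{quan2021energy}. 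No gaps beyond what the paper itself defers to that reference.
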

  \begin{proof}
  According to \eqref{eq:L2reform}, we have
    \begin{align}
      {\Gamma(3-\alpha)\Delta t^\alpha}\langle L_n^\alpha u^n ,\delta u^n\rangle = Q_1+Q_2+Q_3,
    \end{align}
    where
    \begin{equation}
    \begin{aligned}
    &  Q_1 = \left\langle \frac{3\alpha}2\delta u^n-\frac\alpha 2 \delta u^{n-1},\delta u^n \right\rangle,\\
    & Q_2 = \sum_{j=1}^{n} d_j \left\langle \delta u^{n-j+1}, \delta u^n\right\rangle, \\
    & Q_3 = \left\langle -c_n\delta u^1, \delta u^n\right\rangle.
    \end{aligned}
    \end{equation}
    Clearly, we have
    \begin{equation}\label{eq:Q1}
    Q_1 \geq \alpha \|\delta u^n\|^2 + \frac\alpha 4 (\|\delta u^n\|^2-\|\delta u^{n-1}\|^2).
    \end{equation}
    Further,
    \begin{equation}\label{eq:Q2}
    \begin{aligned}
    Q_2 & = \sum_{j=1}^{n} d_{n-j+1} \left\langle \delta u^{j}, \delta u^n\right\rangle  \\
    &= \sum_{j = 1}^{n-1} (d_{n-j}-d_{n-j+1})\langle u^n-u^j,u^n-u^j-(u^{n-1}-u^j)\rangle\\
      & \quad + d_{n-1} \langle u^n-u^0,u^n-u^0-(u^{n-1}-u^0)\rangle\\
     & \geq \frac12\sum_{j = 1}^{n-1} (d_{n-j}-d_{n-j+1})\|u^n-u^j\|^2 -\frac12\sum_{j = 1}^{n-2} (d_{n-j}-d_{n-j+1})\|u^{n-1}-u^j\|^2\\
     & \quad + \frac 12 d_{n}  \|u^{n}-u^0\|^2 - \frac 12 d_{n} \|u^{n-1}-u^0\|^2 \\
      &\geq \frac12\sum_{j = 1}^{n-1} (d_{n-j}-d_{n-j+1})\|u^n-u^j\|^2 -\frac12\sum_{j = 1}^{n-2} (d_{n-j-1}-d_{n-j})\|u^{n-1}-u^j\|^2 \\
      & \quad + \frac 12 d_{n}  \|u^{n}-u^0\|^2 - \frac 12 d_{n-1} \|u^{n-1}-u^0\|^2+\frac12(d_{n-1}-d_n)\|u^{n-1}-u^0\|^2,
    \end{aligned}
    \end{equation}
    where we use the fact $d_{n-j}-d_{n-j+1}\leq d_{n-j-1}-d_{n-j}$ (see \cite{quan2021energy} for this property).
    Moreover, we have
    \begin{equation}\label{eq:Q3}
    \begin{aligned}
    & Q_3  = \left\langle -c_n\delta u^1, \delta u^n\right\rangle \\
    & = c_n \left\langle u^n - u^1, \delta u^n  \right\rangle
    - c_n \left\langle u^n - u^0, \delta u^n  \right\rangle\\
    & = \frac{c_n}{2} (\|u^n-u^1\|^2-\|u^{n-1}-u^1\|^2 -\|u^{n}-u^0\|^2+\|u^{n-1}-u^0\|^2) \\
    & \geq \frac{c_n}{2} (\|u^n-u^1\|^2- \|u^{n}-u^0\|^2) -\frac{c_{n-1}}{2} (\|u^{n-1}-u^1\|^2-\|u^{n-1}-u^0\|^2)\\
    & - \frac12(c_{n-1}-c_n)\|u^{n-1}-u^0\|^2
    \end{aligned}
    \end{equation}
    where we use the fact $c_{n-1}\geq c_n$ (see \cite{quan2021energy}).
    Summing up \eqref{eq:Q1}--\eqref{eq:Q3} and using the fact (see \cite{quan2021energy} for this property)
    \begin{equation}
    {d_{n-1}-d_n\geq c_{n-1}-c_n,}
    \end{equation}
     we then have
    \begin{equation}
    \begin{aligned}
    \langle L_n^\alpha u^n ,\delta u^n\rangle \geq \tilde D^n - \tilde D^{n-1} +\frac\alpha{\Gamma(3-\alpha)\Delta t^\alpha} \|\delta u^n\|^2,
    \end{aligned}
    \end{equation}
    where $\tilde D^n$ is given by
    \eqref{eq:Dtilde}.
    Note that in \eqref{eq:Dtilde}, $a_n<0$ and consequently $\tilde D^n \geq 0$ holds.
    The proof is completed.
\end{proof}

We consider the following second-order L2 Adam--Bashforth scheme for the TFAC equation:
\begin{equation}\label{eq:L2AB}
  \begin{array}{r@{}l}
	\begin{aligned}
L_{n} ^\alpha u& =\gamma\left[ \varepsilon^2 \Delta u^{n} - 2 \tilde f(u^{n-1})+\tilde f(u^{n-2}) - S \Delta t (u^{n}-u^{n-1}) \right],
	\end{aligned}
  \end{array}
\end{equation}
where $S\ge 0$ is some stabilization coefficient and we adopt the derivative of a truncated double-well potential as the force term (see for example \cite{shen2010numerical})
\begin{equation}\label{eq:trunc_f}
\tilde f(u) = \tilde F'(u)
\end{equation}
with
  \begin{equation}
    \tilde F(u)=\left\{
    \begin{array}{lr}
    \begin{aligned}
    & \frac{1}{2}(3M^2-1)(u-M)^2+(M^3-M)(u-M)+\frac14(M^2-1)^2,&&  u\geq M,\\
    & \frac14(u^2-1)^2, && u\in[-M,M],\\
    & \frac{1}{2}(3M^2-1)(u+M)^2-(M^3-M)(u+M)+\frac14(M^2-1)^2,&&  u\leq -M.\label{che_potentialeq-truncated}
    \end{aligned}
    \end{array}\right.
  \end{equation} 
  Here $M\geq 1$ is some constant. 
In this case, we have
  \begin{equation}
  \max_{u\in\mathbb R} |F''(u)|\leq 3M^2-1.
  \end{equation}
We state and prove the following result on the energy dissipation for this scheme.
\begin{thm}[modified energy dissipation]
If
\begin{equation}
S \geq \frac{3\alpha (3M^2-1)}{2(1+\alpha)}\left(\frac{3\gamma\Gamma(3-\alpha) (3M^2-1)}{2\alpha(1+\alpha)}\right)^{\frac1\alpha},
\end{equation}
  the scheme \eqref{eq:L2AB} satisfies the following modified energy dissipation
\begin{equation}\label{eq:thm6.1}
 \tilde E(u^n)\leq\tilde E(u^{n-1}),\quad \forall n\geq 2,
\end{equation}
where
\begin{equation}
 \tilde E(u^n) = E(u^n) + \frac1\gamma\tilde D^n + \frac12(3M^2-1)\| u^n-u^{n-1}\|^2,
 \end{equation}
 and $\tilde D^n$ given by \eqref{eq:Dtilde}.
\end{thm}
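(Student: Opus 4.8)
The plan is to follow the template of the L1 proof: test the scheme \eqref{eq:L2AB} against the increment $\delta u^n = u^n-u^{n-1}$, use Lemma \ref{lem:L2} to extract the telescoping quantity $\tilde D^n$ from the fractional operator, and arrange the remaining terms into a non-positive remainder. Throughout I read $E$ with the truncated potential $\tilde F$ of \eqref{che_potentialeq-truncated}, for which $\tilde F\in C^2(\mathbb R)$ and $|\tilde f'|=|\tilde F''|\le 3M^2-1=:L$ \emph{globally}; this global bound (rather than a maximum principle) is what makes the nonlinear estimates close. First I would multiply \eqref{eq:L2AB} by $\delta u^n$ and integrate over $\Omega$. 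Lemma \ref{lem:L2} controls the left-hand side from below by
\[
\tfrac1\gamma\langle L_n^\alpha u,\delta u^n\rangle \ge \tfrac1\gamma\bigl(\tilde D^n-\tilde D^{n-1}\bigr)+\tfrac{\alpha}{\gamma\Gamma(3-\alpha)\Delta t^\alpha}\|\delta u^n\|^2,
\]
while integration by parts turns the diffusion term into $-\tfrac{\varepsilon^2}2(\|\nabla u^n\|^2-\|\nabla u^{n-1}\|^2)-\tfrac{\varepsilon^2}2\|\nabla\delta u^n\|^2$ and the stabilization term into $-S\Delta t\|\delta u^n\|^2$.

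The delicate ingredient is the Adams--Bashforth extrapolation $2\tilde f(u^{n-1})-\tilde f(u^{n-2})$. I would split it as $\tilde f(u^{n-1})+\bigl(\tilde f(u^{n-1})-\tilde f(u^{n-2})\bigr)$ and treat each piece by the mean value theorem. A second-order Taylor expansion of $\tilde F$ gives, with $\tilde{\mathcal F}(u):=\int_\Omega \tilde F(u)\,\mathrm dx$,
\[
-\langle \tilde f(u^{n-1}),\delta u^n\rangle = -\bigl(\tilde{\mathcal F}(u^n)-\tilde{\mathcal F}(u^{n-1})\bigr)+\tfrac12\langle \tilde f'(\eta^n)\delta u^n,\delta u^n\rangle,
\]
while $\tilde f(u^{n-1})-\tilde f(u^{n-2})=\tilde f'(\zeta^n)\,\delta u^{n-1}$. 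Combining with the gradient part reproduces $-(E(u^n)-E(u^{n-1}))$ together with the two quadratic remainders $\tfrac12\langle\tilde f'(\eta^n)\delta u^n,\delta u^n\rangle$ and $-\langle\tilde f'(\zeta^n)\delta u^{n-1},\delta u^n\rangle$.

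Collecting everything and moving the energy and $\tilde D$ differences to the left, and then adding $\tfrac L2(\|\delta u^n\|^2-\|\delta u^{n-1}\|^2)$ to both sides to complete $\tilde E(u^n)-\tilde E(u^{n-1})$, I would bound the remainders using $|\tilde f'|\le L$ and Young's inequality \emph{with weight one}: $\tfrac12\langle\tilde f'(\eta^n)\delta u^n,\delta u^n\rangle\le\tfrac L2\|\delta u^n\|^2$ and $-\langle\tilde f'(\zeta^n)\delta u^{n-1},\delta u^n\rangle\le \tfrac L2(\|\delta u^n\|^2+\|\delta u^{n-1}\|^2)$. With weight one the $\|\delta u^{n-1}\|^2$ contributions cancel \emph{exactly} against the $-\tfrac L2\|\delta u^{n-1}\|^2$ from the modified-energy difference — this is precisely why the extra term $\tfrac12(3M^2-1)\|u^n-u^{n-1}\|^2$ must be built into $\tilde E$ — so that the whole remainder collapses to a single scalar multiple of $\|\delta u^n\|^2$:
\[
\tilde E(u^n)-\tilde E(u^{n-1})\le \Bigl(\tfrac{3L}{2}-S\Delta t-\tfrac{\alpha}{\gamma\Gamma(3-\alpha)\Delta t^\alpha}\Bigr)\|\delta u^n\|^2-\tfrac{\varepsilon^2}2\|\nabla\delta u^n\|^2.
\]
Thus $\tilde E(u^n)\le\tilde E(u^{n-1})$ holds as soon as the bracket is non-positive, i.e. once $S\Delta t+\tfrac{\alpha}{\gamma\Gamma(3-\alpha)}\Delta t^{-\alpha}\ge \tfrac{3L}{2}$.

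The last step, which I expect to be the main obstacle, is converting this $\Delta t$-dependent requirement into the stated $\Delta t$-independent condition on $S$, i.e. guaranteeing it for every step size. I would treat $g(\Delta t)=S\Delta t+\tfrac{\alpha}{\gamma\Gamma(3-\alpha)}\Delta t^{-\alpha}$ as a function of $\Delta t>0$, locate its minimizer from $g'(\Delta t)=0$ at $\Delta t_*=\bigl(\alpha^2/(\gamma\Gamma(3-\alpha)S)\bigr)^{1/(1+\alpha)}$, and impose $\min_{\Delta t>0}g\ge\tfrac{3L}{2}$. A direct computation yields $\min g=(1+\alpha)\,\alpha^{-\alpha/(1+\alpha)}\bigl(\tfrac{\alpha}{\gamma\Gamma(3-\alpha)}\bigr)^{1/(1+\alpha)}S^{\alpha/(1+\alpha)}$, which is increasing in $S$; solving $\min g\ge\tfrac{3L}{2}$ for $S$ and substituting $L=3M^2-1$ reproduces exactly the threshold in the hypothesis. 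The only genuinely fiddly part is the bookkeeping of the fractional exponents $\alpha/(1+\alpha)$ and $1/\alpha$ when isolating $S$; the cancellation of the $\|\delta u^{n-1}\|^2$ terms and the invocation of Lemma \ref{lem:L2} are routine.
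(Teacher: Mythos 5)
Your proposal follows the paper's proof essentially step for step: test against $\delta u^n$, invoke Lemma \ref{lem:L2} for the telescoping $\tilde D^n$ terms, use the mean-value/Taylor bounds $|\tilde f'|\le L=3M^2-1$ to reduce the nonlinear remainder to $L\|\delta u^n\|^2+\tfrac L2\|\delta u^{n-1}\|^2$, absorb the $\|\delta u^{n-1}\|^2$ term into the extra $\tfrac L2\|\delta u^n\|^2$ piece of $\tilde E$, and reduce everything to $S\Delta t+\tfrac{\alpha}{\gamma\Gamma(3-\alpha)}\Delta t^{-\alpha}\ge\tfrac{3L}{2}$. The only difference is that you carry out explicitly the minimization over $\Delta t$ that the paper dismisses as ``not difficult to verify,'' and your computation of the minimizer and the resulting threshold for $S$ is correct.
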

\begin{proof}
Multiplying \eqref{eq:L2AB} by $\delta u^n$ and integrating over $\Omega$, we then obtain
\begin{equation}
    \begin{aligned}
    \frac 1 \gamma\langle L_n^\alpha u^n ,\delta u^n\rangle
    &\leq    E(u^{n-1})- E(u^{n}) +\frac12 \langle f'(\xi^n_1)(u^n-u^{n-1})^2,1\rangle \\
    & \quad -\langle f'(\xi^n_2)(u^{n-1}-u^{n-2}),u^n-u^{n-1}\rangle - S\Delta t \|\delta u^n\|^2\\
    & \leq E(u^{n-1})- E(u^{n}) + (L-S\Delta t)\|\delta u^n\|^2 + \frac L2\|\delta u^{n-1}\|^2.
    \end{aligned}
\end{equation}
where we use
\begin{equation}
|f'(\xi)| \leq 3M^2-1=:L.
\end{equation}
According to Lemma \ref{lem:L2}, we then have
\begin{equation}
\begin{aligned}
 &E(u^{n})- E(u^{n-1}) +\frac1\gamma (\tilde D^n-\tilde D^{n-1})+
 \\
 &\qquad\left(\frac\alpha{\gamma\Gamma(3-\alpha)\Delta t^\alpha}+S\Delta t -L\right) \|\delta u^n\|^2
  - \frac L2\|\delta u^{n-1}\|^2
 \leq 0.
 \end{aligned}
\end{equation}
If
\begin{equation}
S \geq \frac{3\alpha L}{2(1+\alpha)}\left(\frac{3\gamma\Gamma(3-\alpha)L}{2\alpha(1+\alpha)}\right)^{\frac1\alpha},
\end{equation}
it is not difficult to verify
\begin{equation}
\frac\alpha{\gamma\Gamma(3-\alpha)\Delta t^\alpha}+S\Delta t -L\geq \frac L2,\quad \forall \Delta t>0,
\end{equation}
which implies that \eqref{eq:thm6.1} holds true.
\end{proof}

\section{Extension to TFCH equation}\label{sect5}
In this section, we extend the construction of energy upper bound to the TFCH equation
    \begin{equation}
      \partial_t^\alpha u= \gamma \Delta\left(-\varepsilon^2\Delta u +F'(u)\right),\quad x\in \Omega,\quad 0<t<T \label{eq02},
    \end{equation}
with homogeneous Dirichlet/Neumann or periodic boundary condition.
The analysis is similar, excepted for a minor difference and careful computations, where the energy bound of CH equations diverges from that of AC equations obtained in the former subsection.
For simplicity, we consider the homogeneous Dirichlet boundary condition and present some  results briefly. 

Let $\Psi(t,x)$ be the solution of 
\begin{equation}
    -\Delta \Psi (t, x) =u(t,x)-u(0,x),\quad x\in \Omega, t\ge 0\label{eq0192-1}
    \end{equation}
with the homogeneous Dirichlet boundary condition, zero mean $\int_\Omega \Psi (t, x)dx = 0$, and zero initial condition $\Psi(0,x)=0$.
We define the modified energy (upper bound)
\begin{align}
\widetilde E(t) =& E(t) + \frac{1}{\gamma\Gamma(1-\alpha)}D_{ \Delta,\alpha}(t), \notag\\
D_{ \Delta,\alpha}(t) = & \frac{\|\nabla\Psi (t,\cdot)\|^2}{2t^\alpha} + \frac{\alpha}{2} \int_0^t\frac{\|\nabla (\Psi (t,\cdot)-\Psi(\tau,\cdot))\|^2}{{(t-\tau)^{\alpha+1}}}\,\mathrm{d}\tau, \label{ch-energy}
\end{align}
where $E(t)$ is the original Ginzburg--Landau energy \eqref{glenergy}.
Now we state our result on the decreasing dissipation-preserving energy functional as follows.
\begin{thm} Let $u$ be the solution to the problem \eqref{eq02}.
Assume that $D_{ \Delta,\alpha}(t)$ and $D_{ \Delta,\alpha+1}(t)$ are well-defined for $0<t\le T$. Then we have the following decreasing energy dissipation law
\begin{equation}
E(t)\leq\widetilde E(t)\leq\widetilde E(s)\leq E(0),\quad \forall 0\le s\le t\le T.\label{result-ch}
\end{equation}
\end{thm}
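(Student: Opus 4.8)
The plan is to reproduce the proof of Theorem~\ref{theorem_of_ac} verbatim, but in the $H^{-1}$ metric that underlies the Cahn--Hilliard flow, so that the role played by $u$ in the Allen--Cahn argument is taken over by $\nabla\Psi$. The natural test function is no longer $\partial_t u$ but $\partial_t\Psi = (-\Delta)^{-1}\partial_t u$, since differentiating \eqref{eq0192-1} in time gives $-\Delta\partial_t\Psi = \partial_t u$. First I would multiply \eqref{eq02} by $\partial_t\Psi$ and integrate over $\Omega$. Writing $\mu = -\varepsilon^2\Delta u + F'(u)$ and integrating by parts twice (the boundary terms vanishing under the homogeneous Dirichlet condition), the right-hand side becomes
\[
\gamma\langle\Delta\mu,\partial_t\Psi\rangle = \gamma\langle\mu,\Delta\partial_t\Psi\rangle = -\gamma\langle\mu,\partial_t u\rangle = -\gamma\frac{\mathrm d}{\mathrm dt}E(t),
\]
where I used $\Delta\partial_t\Psi = \partial_t\Delta\Psi = -\partial_t u$ and the usual identity $\langle\mu,\partial_t u\rangle = \frac{\mathrm d}{\mathrm dt}E(t)$.

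For the left-hand side I would substitute the Caputo formula \eqref{newdef} for $\partial_t^\alpha u$ and use $u(t)-u(0) = -\Delta\Psi(t)$ together with $u(t)-u(\tau) = -\Delta(\Psi(t)-\Psi(\tau))$. Integrating the Laplacian by parts onto $\partial_t\Psi$ turns every $L^2$ pairing into a pairing of gradients,
\[
\Gamma(1-\alpha)\langle\partial_t^\alpha u,\partial_t\Psi\rangle = \left\langle\frac{\nabla\Psi(t)}{t^\alpha} + \alpha\int_0^t\frac{\nabla(\Psi(t)-\Psi(\tau))}{(t-\tau)^{\alpha+1}}\,\mathrm d\tau,\ \nabla\partial_t\Psi\right\rangle.
\]
Because $\Psi(0)=0$ we have $\nabla\Psi(t)=\nabla\Psi(t)-\nabla\Psi(0)$, so the right-hand side is precisely the expression appearing in the proof of Lemma~\ref{lemma1111} with the scalar field $u$ replaced, component by component, by the vector field $v:=\nabla\Psi$. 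Repeating the manipulations of that lemma — replacing $\nabla\partial_t\Psi(t)$ by $\partial_t\nabla(\Psi(t)-\Psi(\tau))$, rewriting the integrands as time-derivatives of squared norms, and differentiating under the integral sign — yields
\[
\Gamma(1-\alpha)\langle\partial_t^\alpha u,\partial_t\Psi\rangle = \frac{\mathrm d}{\mathrm dt}D_{\Delta,\alpha}(t) + \alpha D_{\Delta,\alpha+1}(t),
\]
with $D_{\Delta,\alpha}$ as in \eqref{ch-energy} and $D_{\Delta,\alpha+1}$ its order-$(\alpha+1)$ analogue.

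Equating the two sides and dividing by $\gamma\Gamma(1-\alpha)$ produces the differential identity
\[
\frac{\mathrm d}{\mathrm dt}\widetilde E(t) + \frac{\alpha}{\gamma\Gamma(1-\alpha)}D_{\Delta,\alpha+1}(t) = 0,
\]
and since $D_{\Delta,\alpha+1}(t)\ge 0$ this gives $\frac{\mathrm d}{\mathrm dt}\widetilde E\le 0$. Integrating over $(s,t)$ yields $\widetilde E(t)\le\widetilde E(s)$; the bound $E(t)\le\widetilde E(t)$ is immediate from $D_{\Delta,\alpha}\ge 0$; and $\widetilde E(s)\le E(0)$ follows by letting $s\to 0^+$ together with $\widetilde E(0)=E(0)$, the $H^{-1}$ analogue of \eqref{AC-prop1}, valid because $D_{\Delta,\alpha}(t)\to 0$ as $t\to 0$. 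This establishes \eqref{result-ch}.

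I expect the genuine work to lie not in the algebra — which is dictated by Lemma~\ref{lemma1111} once the substitution $u\leftrightarrow\nabla\Psi$ is made — but in the regularity bookkeeping. One must justify commuting $\partial_t^\alpha$ and $\partial_t$ with $(-\Delta)^{-1}$, the differentiation under the integral sign, and the vanishing of the limiting boundary term $\lim_{\delta\to 0}\delta^{-\alpha-1}\|\nabla(\Psi(t)-\Psi(t-\delta))\|^2$, all of which reduce to time-H\"older and $H^1$-in-space control of $\Psi$. The theorem sidesteps the hardest part by assuming $D_{\Delta,\alpha}$ and $D_{\Delta,\alpha+1}$ are well-defined; the remaining obstacle is to transfer the estimate $\|\partial_t u\|\lesssim t^{\alpha-1}$ of Proposition~\ref{theorem1} to $\|\nabla\partial_t\Psi\|$ via elliptic regularity, so that the well-posedness bounds \eqref{well-1}--\eqref{well-4} carry over to the gradient field $\nabla\Psi$.
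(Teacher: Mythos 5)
Your proposal is correct and takes essentially the same route as the paper: the paper's proof simply says ``similarly to the proof of Theorem \ref{theorem_of_ac}'' and states the differential identity \eqref{result_CH}, and the steps you spell out --- testing \eqref{eq02} with $\partial_t\Psi=(-\Delta)^{-1}\partial_t u$, integrating by parts to turn the right-hand side into $-\gamma\frac{\mathrm{d}}{\mathrm{d}t}E(t)$, and rerunning Lemma \ref{lemma1111} with $u$ replaced by $\nabla\Psi$ (using $\Psi(0)=0$) to obtain $\frac{\mathrm{d}}{\mathrm{d}t}D_{\Delta,\alpha}(t)+\alpha D_{\Delta,\alpha+1}(t)$ --- are exactly what that identity presupposes. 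Integrating over $(s,t)$ and using $\widetilde E(0)=E(0)$ and $D_{\Delta,\alpha}\geq 0$ then yields \eqref{result-ch}, just as in the paper.
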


\begin{proof}
Similarly to the proof of Theorem \ref{theorem_of_ac},
we have
  \begin{align}
  &\frac{\mathrm{d}}{\mathrm{d}t}\left(E(t)+\frac{\|\nabla\Psi (t,\cdot)\|^2}{2\gamma\Gamma(1-\alpha)t^\alpha}+\alpha\int_0^t\frac{\|\nabla (\Psi (t,\cdot)-\Psi(\tau,\cdot))\|^2}{{2\gamma\Gamma(1-\alpha)(t-\tau)^{\alpha+1}}}\,{\mathrm d}\tau\right)  \notag\\
  &=-\left(\frac{\alpha \|\nabla\Psi (t,\cdot)\|^2}{2\gamma\Gamma(1-\alpha)t^{\alpha+1}}+\alpha(\alpha+1)\int_0^t\frac{\|\nabla (\Psi (t,\cdot)-\Psi(\tau,\cdot))\|^2}{2\gamma\Gamma(1-\alpha)(t-\tau)^{\alpha+2}}\,{\mathrm d}\tau\right)\leq0,\label{result_CH}
  \end{align}
 that is,
 \begin{align}
 &\frac{\mathrm{d}}{\mathrm{d}t}\left(E(t)+ \frac{1}{\gamma\Gamma(1-\alpha)}D_{ \Delta,\alpha}(t)\right)
  =-\frac{\alpha}{\gamma\Gamma(1-\alpha)} D_{ \Delta,\alpha+1}(t)\leq0,
 \end{align}
Integrating \eqref{result_CH} over $(s,t)$ leads to the desired dissipation-preserving law.
\end{proof}

We also give a proposition to reveal the relationship between $\widetilde E_{CH}(t)$ and $E(t)$.

\begin{prop} \label{Prop-CHEt} Let $\widetilde E(t)$ and $E(t)$ are defined in \eqref{ch-energy} and \eqref{glenergy}. Assuming that $u(t)$ is a global bounded solution of \eqref{eq02}.  Then, for any $\alpha\in(0,1)$, we have
\begin{align}
\widetilde E(0) = E(0).\label{CH-prop1}
\end{align}
Assuming that $u(t)$ converges to some steady state $u_\infty$  strongly in $L^2(\Omega)$ and $\exists C>0$, s.t. $\|u(t,\cdot)-u(\tau,\cdot)\| \leq C |t-\tau|^\alpha$ for any $t,\tau\ge 0$,
we then have
\begin{align}
 \widetilde E(t)\xrightarrow{t\rightarrow +\infty} E(t).\label{CH-prop2}
 \end{align}
Besides, for any $t\in (0,\infty)$,
\begin{align}
\widetilde E(t)\xrightarrow{\alpha\rightarrow 1} E(t).\label{CH-prop3}
\end{align}
\end{prop}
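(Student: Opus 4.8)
\emph{Overall strategy.} The plan is to transcribe the proof of Proposition~\ref{Prop-ACEt} almost verbatim, the only structural change being that the $L^2$-norm of the increment $u(t,\cdot)-u(\tau,\cdot)$ is everywhere replaced by $\|\nabla(\Psi(t,\cdot)-\Psi(\tau,\cdot))\|$, which is precisely the $\dot H^{-1}$-norm of $u(t,\cdot)-u(\tau,\cdot)$. The one new ingredient that makes this replacement harmless is an elliptic estimate: since $-\Delta(\Psi(t,\cdot)-\Psi(\tau,\cdot))=u(t,\cdot)-u(\tau,\cdot)$ under homogeneous Dirichlet boundary conditions, integrating by parts and applying the Poincar\'e inequality $\|\Psi\|\le C_P\|\nabla\Psi\|$ gives
\[
\|\nabla(\Psi(t,\cdot)-\Psi(\tau,\cdot))\|^2 = \langle u(t,\cdot)-u(\tau,\cdot),\,\Psi(t,\cdot)-\Psi(\tau,\cdot)\rangle \le C_P\,\|u(t,\cdot)-u(\tau,\cdot)\|\,\|\nabla(\Psi(t,\cdot)-\Psi(\tau,\cdot))\|,
\]
and hence $\|\nabla(\Psi(t,\cdot)-\Psi(\tau,\cdot))\|\le C_P\,\|u(t,\cdot)-u(\tau,\cdot)\|$. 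Thus every bound on $L^2$-increments of $u$ transfers, with one extra factor $C_P$, to a bound on the gradient increments of $\Psi$. Accordingly I would split $D_{\Delta,\alpha}(t)=T_1^\Delta(t)+T_2^\Delta(t)$ with
\[
T_1^\Delta(t)=\frac{\|\nabla\Psi(t,\cdot)\|^2}{2t^\alpha},\qquad T_2^\Delta(t)=\frac{\alpha}{2}\int_0^t\frac{\|\nabla(\Psi(t,\cdot)-\Psi(\tau,\cdot))\|^2}{(t-\tau)^{\alpha+1}}\,\mathrm{d}\tau,
\]
so that $\widetilde E=E+\frac{1}{\gamma\Gamma(1-\alpha)}(T_1^\Delta+T_2^\Delta)$.

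\emph{The identity \eqref{CH-prop1}.} Using the H\"older estimate $\|u(t,\cdot)-u(\tau,\cdot)\|\le C|t-\tau|^\alpha$ (which holds by the regularity of the TFCH solution, exactly as in Lemma~\ref{lem_Holder}) together with the elliptic estimate above, one gets $\|\nabla(\Psi(t,\cdot)-\Psi(\tau,\cdot))\|\le C'|t-\tau|^\alpha$. Arguing as in \eqref{well-4}, this yields $T_1^\Delta(t)\lesssim t^\alpha$ and $T_2^\Delta(t)\lesssim\int_0^t(t-\tau)^{\alpha-1}\,\mathrm{d}\tau=\alpha^{-1}t^\alpha$, so both pieces are continuous at $t=0$ and vanish there (recall $\Psi(0,\cdot)=0$). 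Hence $\widetilde E(0)=E(0)$.

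\emph{The limit \eqref{CH-prop2}.} Boundedness of the global solution gives $\|\nabla\Psi(t,\cdot)\|\le C_P\|u(t,\cdot)-u(0,\cdot)\|\le M$, so $T_1^\Delta(t)=\|\nabla\Psi(t,\cdot)\|^2/(2t^\alpha)\to 0$ as $t\to\infty$. For $T_2^\Delta$ I would apply Lemma~\ref{lem-4.1} to the $L^2(\Omega)$-valued curve $w(t):=\nabla\Psi(t,\cdot)$, whose three hypotheses follow from the elliptic estimate: boundedness $\|w(t)\|\le M$; H\"older continuity $\|w(t)-w(\tau)\|\le C'|t-\tau|^\alpha$; and convergence $w(t)\to\nabla\Psi_\infty$ in $L^2$, where $\Psi_\infty$ solves $-\Delta\Psi_\infty=u_\infty-u(0,\cdot)$, since $\|w(t)-\nabla\Psi_\infty\|=\|\nabla(\Psi(t,\cdot)-\Psi_\infty)\|\le C_P\|u(t,\cdot)-u_\infty\|\to 0$ under the assumed strong $L^2$-convergence. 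Lemma~\ref{lem-4.1} then gives $T_2^\Delta(t)\to 0$, whence $\widetilde E(t)\to E(t)$.

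\emph{The limit \eqref{CH-prop3}, and the main difficulty.} For fixed $t\in(0,\infty)$ both $T_1^\Delta(t)$ and $T_2^\Delta(t)$ are finite (by well-definedness of $D_{\Delta,\alpha}$), while the prefactor $1/\Gamma(1-\alpha)\to 0$ as $\alpha\to1$; this gives \eqref{CH-prop3} immediately. The only genuinely new point, and the step I expect to require the most care, is the justification that Lemma~\ref{lem-4.1} may be applied to the gradient field $\nabla\Psi$ rather than to $u$ itself: one must correctly identify the $L^2$-limit $\nabla\Psi_\infty$ and verify the boundedness, H\"older, and convergence hypotheses uniformly in $t$, all of which hinge on the Poincar\'e constant $C_P$ of $\Omega$ being finite (valid on a bounded domain with Dirichlet data). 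Once the elliptic estimate is in place, the remainder is a direct transcription of the Allen--Cahn argument in Proposition~\ref{Prop-ACEt}.
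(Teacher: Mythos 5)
Your proposal is correct and follows exactly the route the paper intends: the paper states Proposition~\ref{Prop-CHEt} without proof, leaving it as the analogue of Proposition~\ref{Prop-ACEt}, and your transcription of that argument with the elliptic/Poincar\'e estimate $\|\nabla(\Psi(t,\cdot)-\Psi(\tau,\cdot))\|\le C_P\|u(t,\cdot)-u(\tau,\cdot)\|$ supplies precisely the one new ingredient needed to transfer the $L^2$ bounds on $u$ to the $\dot H^{-1}$-type quantities in $D_{\Delta,\alpha}$. The only small caveat is that your proof of \eqref{CH-prop1} invokes the H\"older bound on $u$, which the proposition lists as a hypothesis only for \eqref{CH-prop2}; this mirrors the paper's own looseness and does not affect the substance of the argument.
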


On the discrete level, similar modified energy could be obtained. 
For example, consider the semidiscrete L1-IMEX scheme for the TFCH equation
\begin{align}
&\overline\partial_t^\alpha u^n =\gamma\Delta\left( -\varepsilon^2\Delta u^{n} + \tilde f(u^{n-1})+S(u^n-u^{n-1})\right),\label{schemech}
\end{align}
where $\tilde f(u)$ is the truncated double-well potential \eqref{eq:trunc_f}.
\begin{thm} If $S\geq \frac L{2}$, the scheme \eqref{schemech} satisfies a discrete energy dissipation law as below
  \begin{align}
  \widetilde E(u^n)\leq\widetilde E(u^{n-1}),
  \end{align}
  where
  \begin{align}
    \widetilde E(u^n) &=  E(u^n)+\frac1{2\gamma}\sum_{k = 1}^{n-1} (b_{n-k-1}-b_{n-k})\Vert \nabla \Psi^{n,k}\Vert^2 + \frac1{2\gamma}b_{n-1} \Vert \nabla \Psi^{n,0}\Vert^2
  \end{align}
and
  \begin{align}
    -\Delta\Psi^{n,k} = u^n-u^k\quad \forall 0\leq k\leq n-1.\label{explainCH}
  \end{align}
\end{thm}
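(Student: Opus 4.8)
The plan is to mirror the Allen--Cahn argument of Section \ref{sect4.1}, carrying out the energy estimate in the inner product induced by $(-\Delta)^{-1}$ instead of in $L^2(\Omega)$. Write the chemical potential as $\mu^n = -\varepsilon^2\Delta u^n + \tilde f(u^{n-1}) + S(u^n-u^{n-1})$, so that the scheme \eqref{schemech} reads $\overline\partial_t^\alpha u^n = \gamma\Delta\mu^n$. For $0\le k\le n-1$ let $\Psi^{n,k}$ be defined by \eqref{explainCH}, i.e. $-\Delta\Psi^{n,k}=u^n-u^k$ with homogeneous Dirichlet data; under the Dirichlet condition this Poisson problem is uniquely solvable, which is the only point where well-posedness of the $\Psi^{n,k}$ must be checked. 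I would then test \eqref{schemech} against $\Psi^{n,n-1}=(-\Delta)^{-1}(u^n-u^{n-1})$ and integrate over $\Omega$.

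For the right-hand side, two integrations by parts (the boundary terms vanish since $\Psi^{n,n-1}$ carries homogeneous Dirichlet data) give $\gamma\langle\Delta\mu^n,\Psi^{n,n-1}\rangle = \gamma\langle\mu^n,\Delta\Psi^{n,n-1}\rangle = -\gamma\langle\mu^n,\,u^n-u^{n-1}\rangle$. Expanding $\langle\mu^n,u^n-u^{n-1}\rangle$ exactly as in \eqref{theoremacresultp1}, using $\langle-\varepsilon^2\Delta u^n,u^n-u^{n-1}\rangle=\frac{\varepsilon^2}{2}(\|\nabla u^n\|^2-\|\nabla u^{n-1}\|^2+\|\nabla(u^n-u^{n-1})\|^2)$ together with the Taylor identity $\tilde F(u^n)-\tilde F(u^{n-1})=\tilde f(u^{n-1})(u^n-u^{n-1})+\frac12\tilde f'(\xi^n)(u^n-u^{n-1})^2$, yields $\langle\mu^n,u^n-u^{n-1}\rangle \ge E(u^n)-E(u^{n-1})$ whenever $S-\frac12\tilde f'(\xi^n)\ge0$. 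This last condition holds because the truncated potential gives $|\tilde f'|\le L$ globally and $S\ge L/2$; note that no maximum principle is needed here, which is precisely why the truncated potential is used for the TFCH equation.

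For the left-hand side, I would establish the $H^{-1}$-analogue of Lemma \ref{resultofdiscretelemma}. Introduce the inner product $\langle v,w\rangle_{-1}:=\langle\nabla(-\Delta)^{-1}v,\nabla(-\Delta)^{-1}w\rangle$, so that $\langle u^n-u^k,u^n-u^j\rangle_{-1}=\langle\nabla\Psi^{n,k},\nabla\Psi^{n,j}\rangle$ and, in particular, $\langle\overline\partial_t^\alpha u^n,\Psi^{n,n-1}\rangle=\langle\overline\partial_t^\alpha u^n,\,u^n-u^{n-1}\rangle_{-1}$. Since $\langle\cdot,\cdot\rangle_{-1}$ is a genuine symmetric positive-definite inner product and the proof of Lemma \ref{resultofdiscretelemma} uses only bilinearity, the polarization inequality $2\langle a,a-b\rangle\ge\|a\|^2-\|b\|^2$, and the convexity-type monotonicity of the coefficients $b_k$ (all inner-product agnostic), the very same chain of inequalities applied with $\|\cdot\|$ replaced by $\|\cdot\|_{-1}$ gives $\langle\overline\partial_t^\alpha u^n,u^n-u^{n-1}\rangle_{-1}\ge D^n-D^{n-1}$, where $D^n:=\frac12\sum_{k=1}^{n-1}(b_{n-k-1}-b_{n-k})\|\nabla\Psi^{n,k}\|^2+\frac12 b_{n-1}\|\nabla\Psi^{n,0}\|^2=\gamma(\widetilde E(u^n)-E(u^n))$.

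Combining the two sides of the tested equation gives $\gamma(E(u^n)-E(u^{n-1}))\le -\langle\overline\partial_t^\alpha u^n,u^n-u^{n-1}\rangle_{-1}\le -(D^n-D^{n-1})$, hence $\gamma E(u^n)+D^n\le \gamma E(u^{n-1})+D^{n-1}$, and dividing by $\gamma$ produces $\widetilde E(u^n)\le\widetilde E(u^{n-1})$. I expect the main obstacle to be the bookkeeping needed to confirm that Lemma \ref{resultofdiscretelemma} transfers literally to $\langle\cdot,\cdot\rangle_{-1}$, i.e. that none of its steps secretly invoke the pointwise or $L^2$ structure, together with making the two integration-by-parts identities rigorous (no boundary contributions) under the homogeneous Dirichlet condition; once these are in place the remaining computation is routine.
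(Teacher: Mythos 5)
Your proof is correct and is exactly the argument the paper intends: the paper omits the proof of this theorem with the remark that it is ``similar to the case of the TFAC equation,'' and your route---testing the scheme against $\Psi^{n,n-1}$ and transferring Lemma \ref{resultofdiscretelemma} verbatim to the $H^{-1}$ inner product $\langle\cdot,\cdot\rangle_{-1}$, whose induced norms are precisely the $\|\nabla\Psi^{n,k}\|$ appearing in $\widetilde E$---is that similar argument, including the correct observation that the truncated potential with $|\tilde f'|\le L$ replaces the maximum principle used in the TFAC case. The one technicality to nail down (which you rightly flag, and which the paper also glosses over) is that the double integration by parts $\langle\Delta\mu^n,\Psi^{n,n-1}\rangle=\langle\mu^n,\Delta\Psi^{n,n-1}\rangle$ needs boundary data on $\mu^n$ as well as on $\Psi^{n,n-1}$ for both boundary terms to vanish under the stated Dirichlet setting.
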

\begin{proof}
The proof is similar to the case of the TFAC equation and is omitted here.
\end{proof}

\section{Numerical experiments}\label{sect6}
In this section, some numerical implementations for solving the time-fractional phase-field equations are presented.
\subsection{TFAC equation}
Consider the TFAC equation with $\Omega = [-1,1]^2$ and $\varepsilon = 0.025$.
The diffusion constant is chosen as $\gamma = 2$. We use the stablized scheme \eqref{schemeac} with $S = 20$.
Moreover, $128\times 128$ Fourier modes and $\tau = 0.01$ are taken. The initial state is given as
\begin{equation}
  \phi_0(x,y) = \tanh\left(\frac1{\sqrt2\varepsilon}\left(\frac{2r}{3}-\frac14-\frac{1+\cos(4\theta)}{16}\right)\right),
\end{equation}
with
\begin{equation}
   r= \sqrt{x^2+y^2},\quad \theta = \mathrm{arctanh}\left(\frac{x}{y}\right).
\end{equation}

\begin{figure}[!t]
  \centering
  \subfigure[$\alpha = 0.9,~t=1$]{
  \begin{minipage}[t]{0.25\textwidth}
  \centering
  \includegraphics[width=\textwidth]{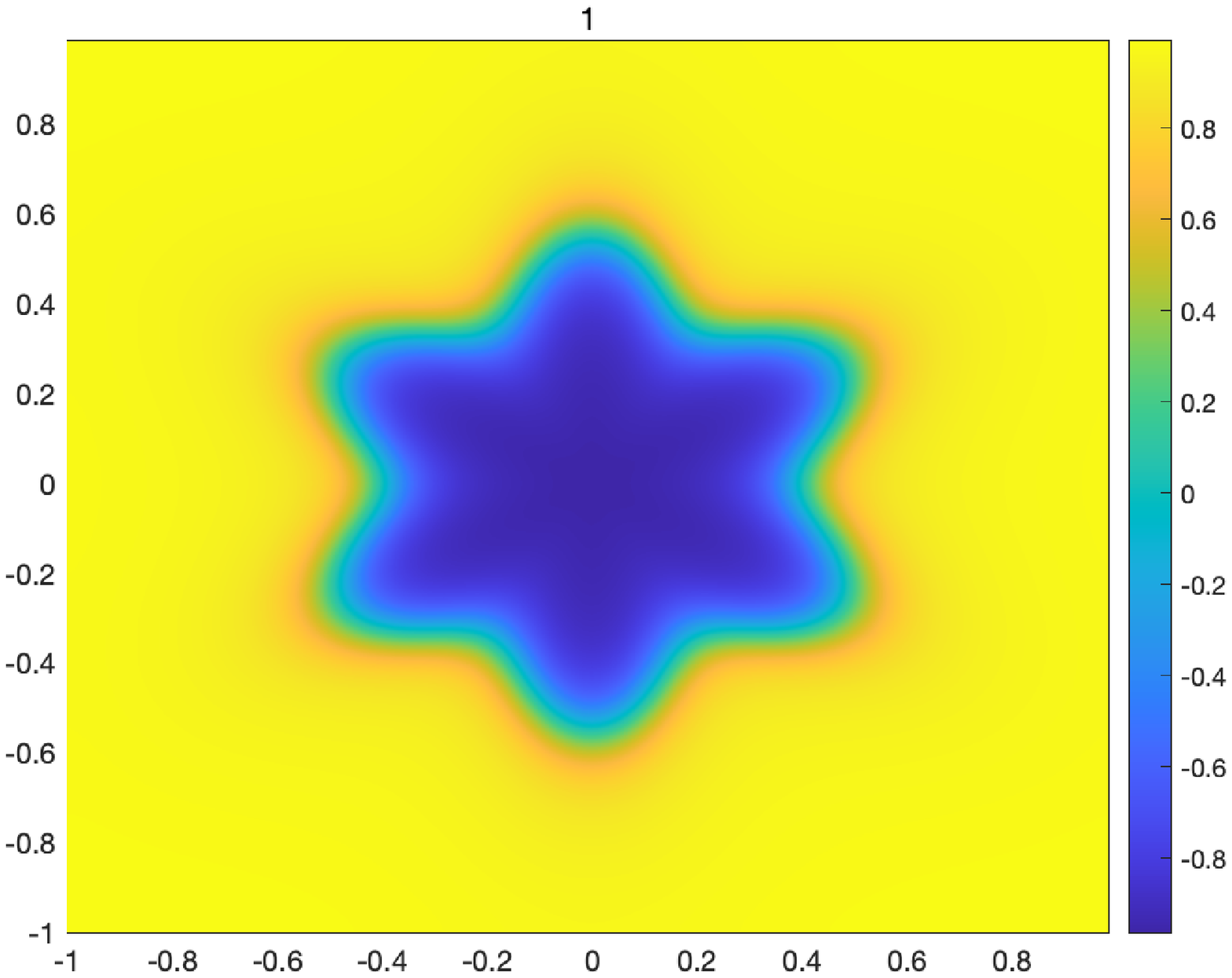}
  \end{minipage}
  }%
  \subfigure[$\alpha = 0.9,~t=8$]{
  \begin{minipage}[t]{0.25\textwidth}
    \centering
    \includegraphics[width=\textwidth]{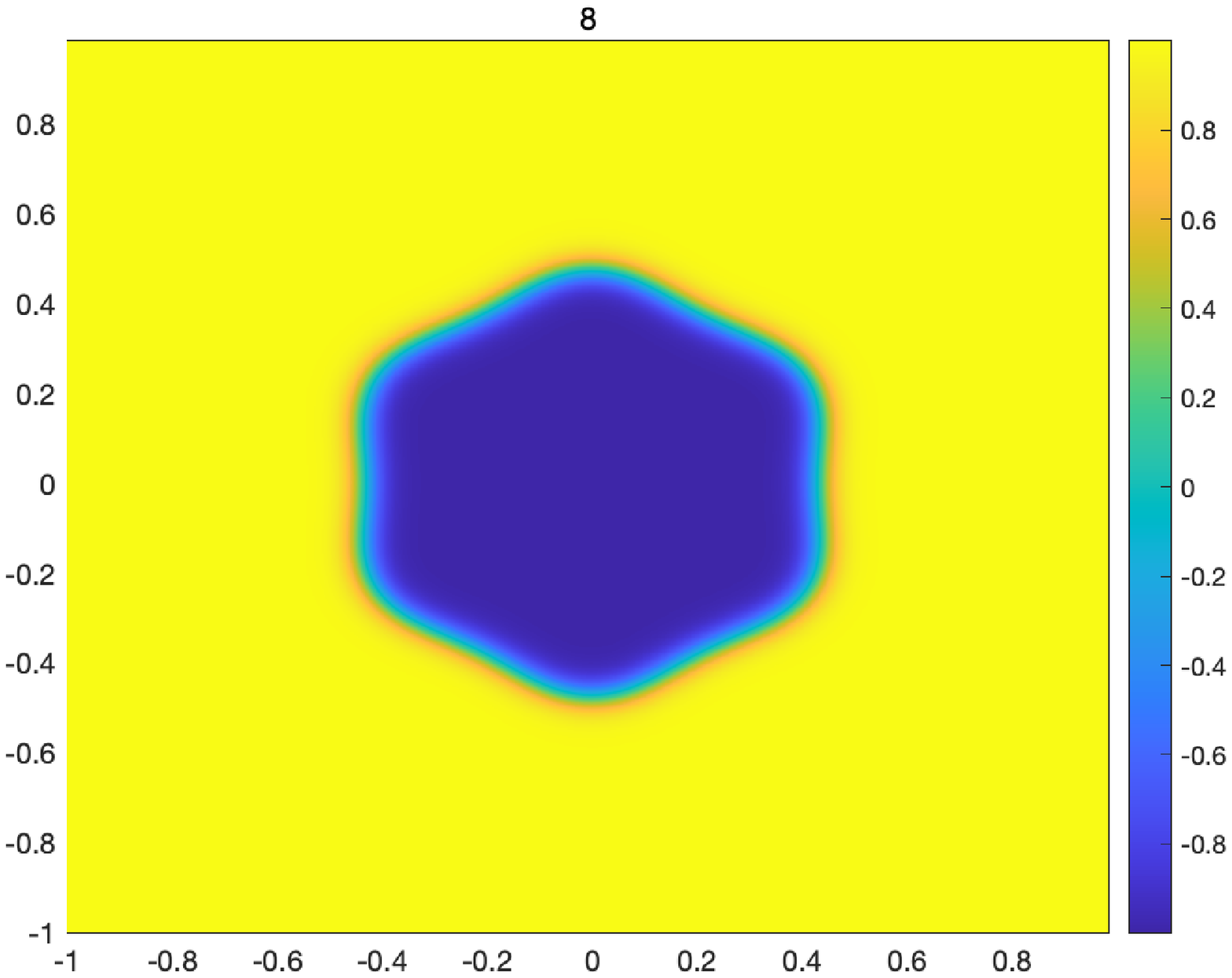}
  \end{minipage}
  }%
  \subfigure[$\alpha = 0.9,~t=64$]{
    \begin{minipage}[t]{0.25\textwidth}
      \centering
      \includegraphics[width=\textwidth]{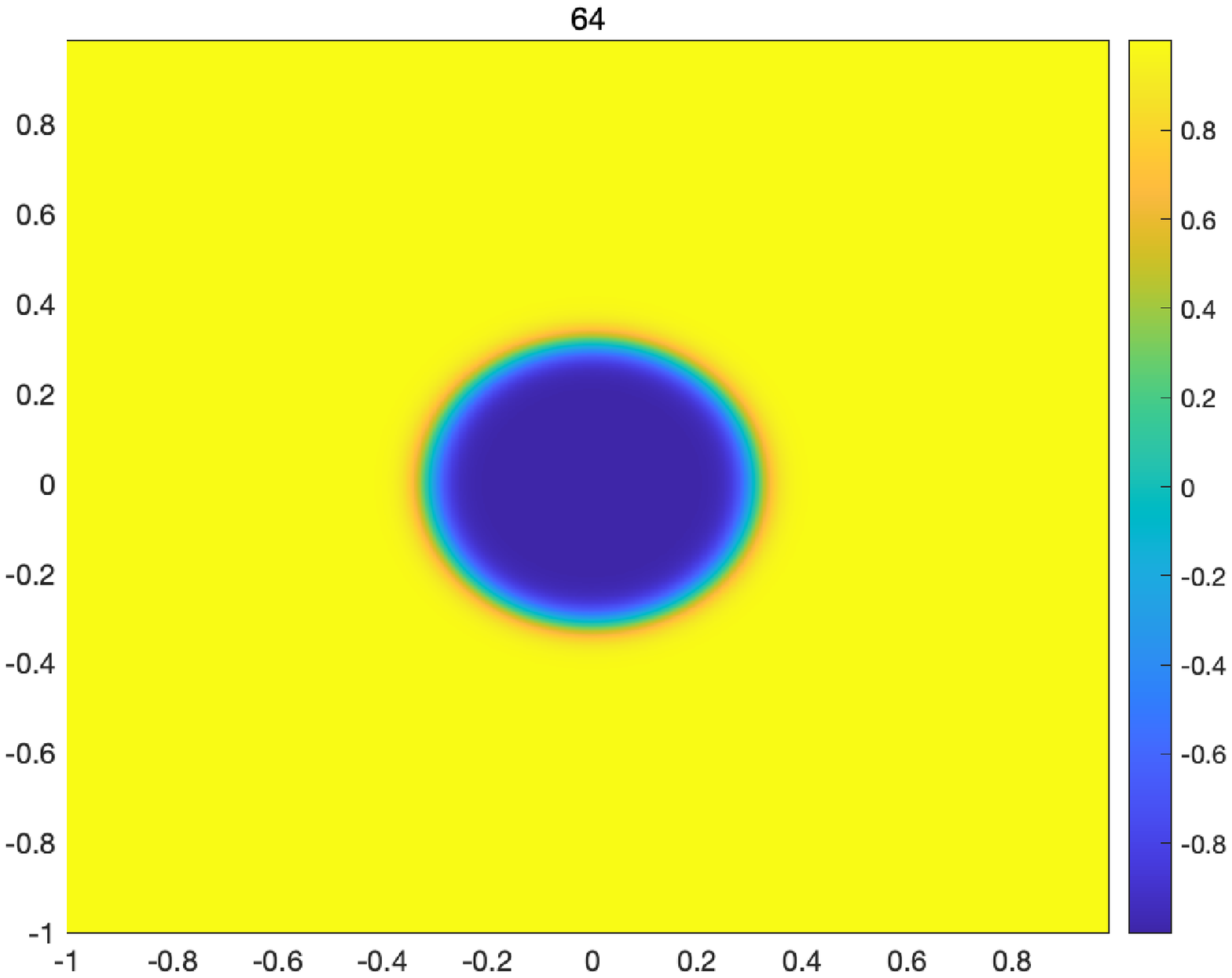}
    \end{minipage}
  }%

  \subfigure[$\alpha = 0.6,~t=1$]{
    \begin{minipage}[t]{0.25\textwidth}
    \centering
    \includegraphics[width=\textwidth]{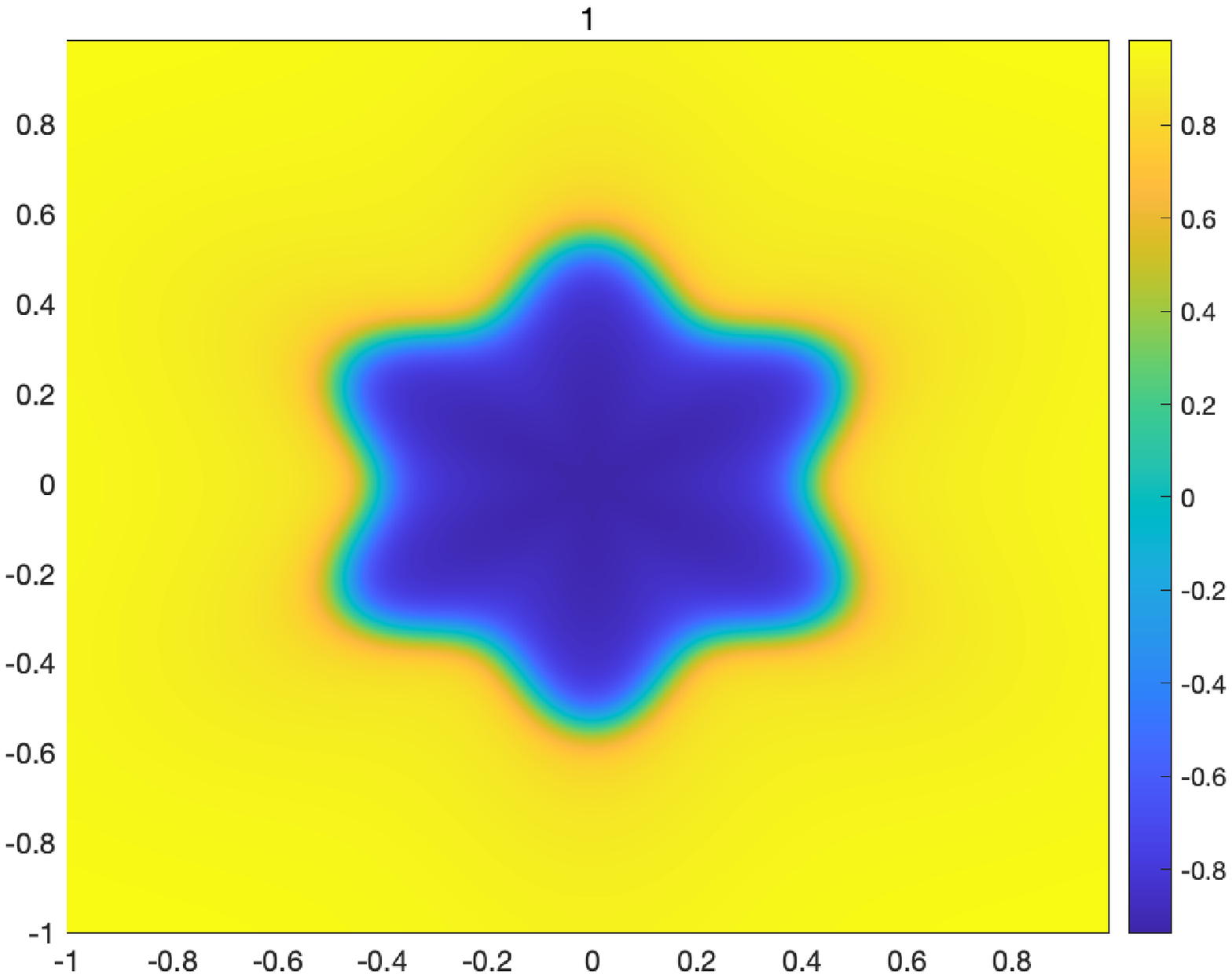}
    \end{minipage}
    }%
    \subfigure[$\alpha = 0.6,~t=8$]{
    \begin{minipage}[t]{0.25\textwidth}
      \centering
      \includegraphics[width=\textwidth]{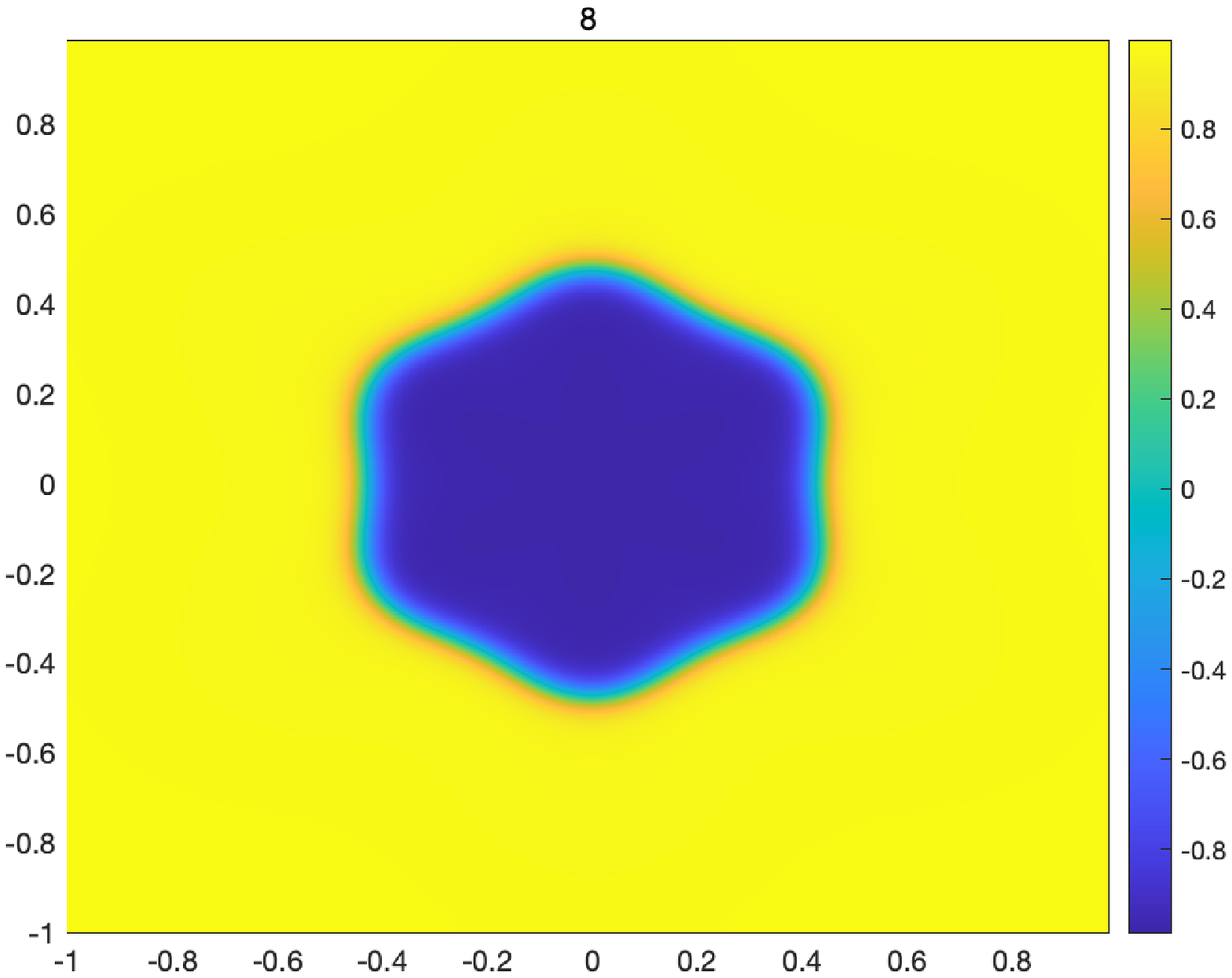}
    \end{minipage}
    }%
    \subfigure[$\alpha = 0.6,~t=64$]{
      \begin{minipage}[t]{0.25\textwidth}
        \centering
        \includegraphics[width=\textwidth]{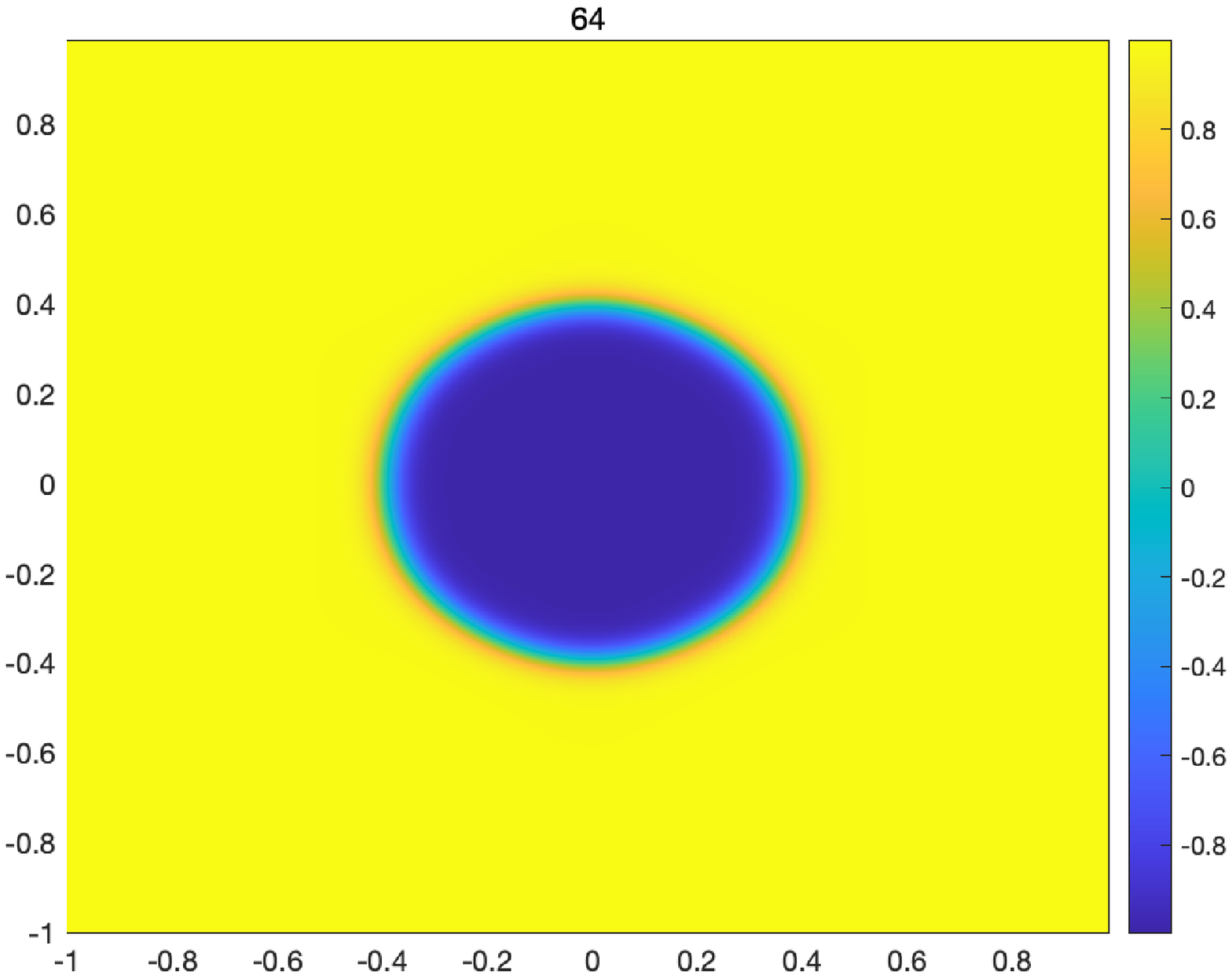}
      \end{minipage}
    }%

    \subfigure[$\alpha = 0.3,~t=1$]{
      \begin{minipage}[t]{0.25\textwidth}
      \centering
      \includegraphics[width=\textwidth]{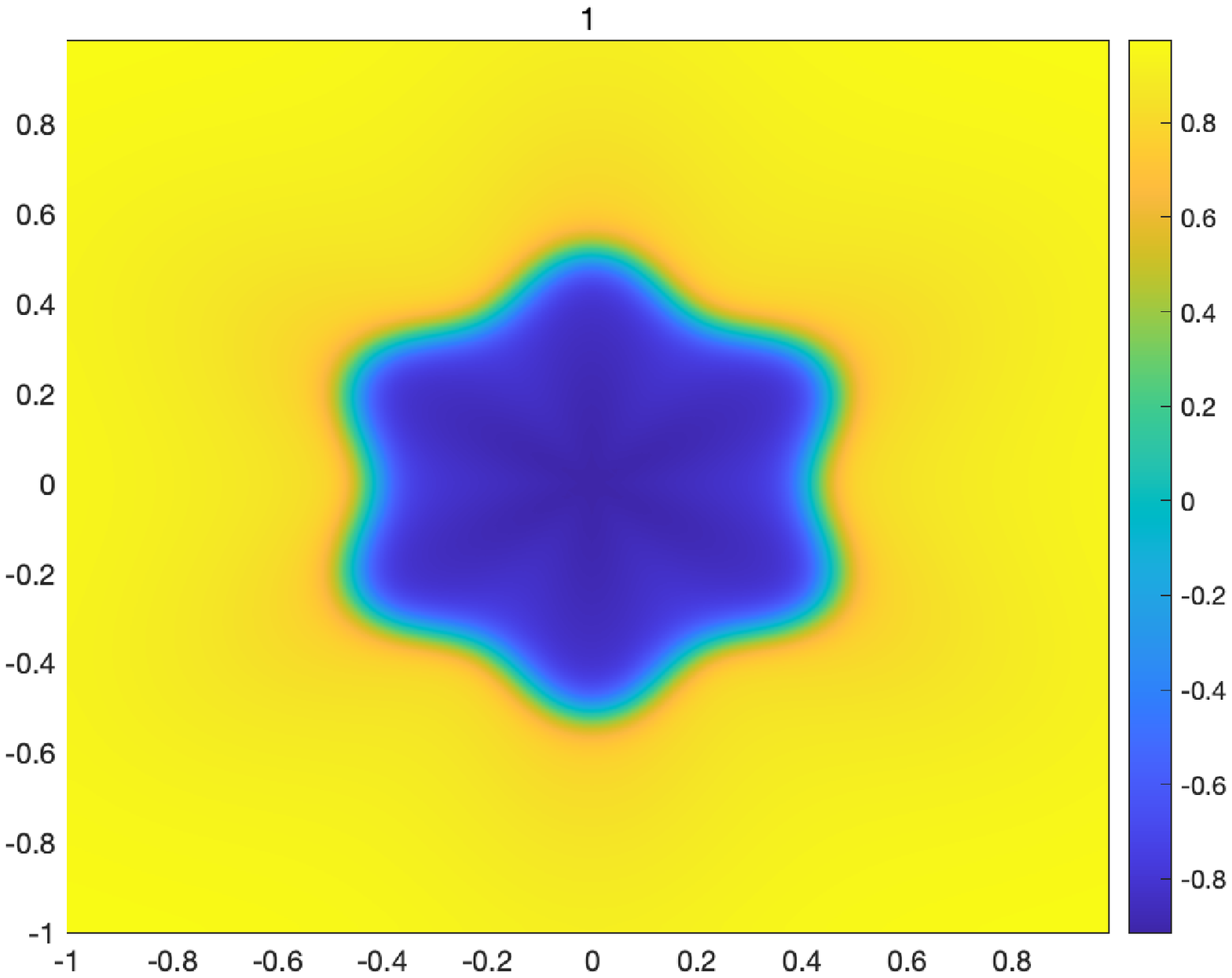}
      \end{minipage}
      }%
      \subfigure[$\alpha = 0.3,~t=8$]{
      \begin{minipage}[t]{0.25\textwidth}
        \centering
        \includegraphics[width=\textwidth]{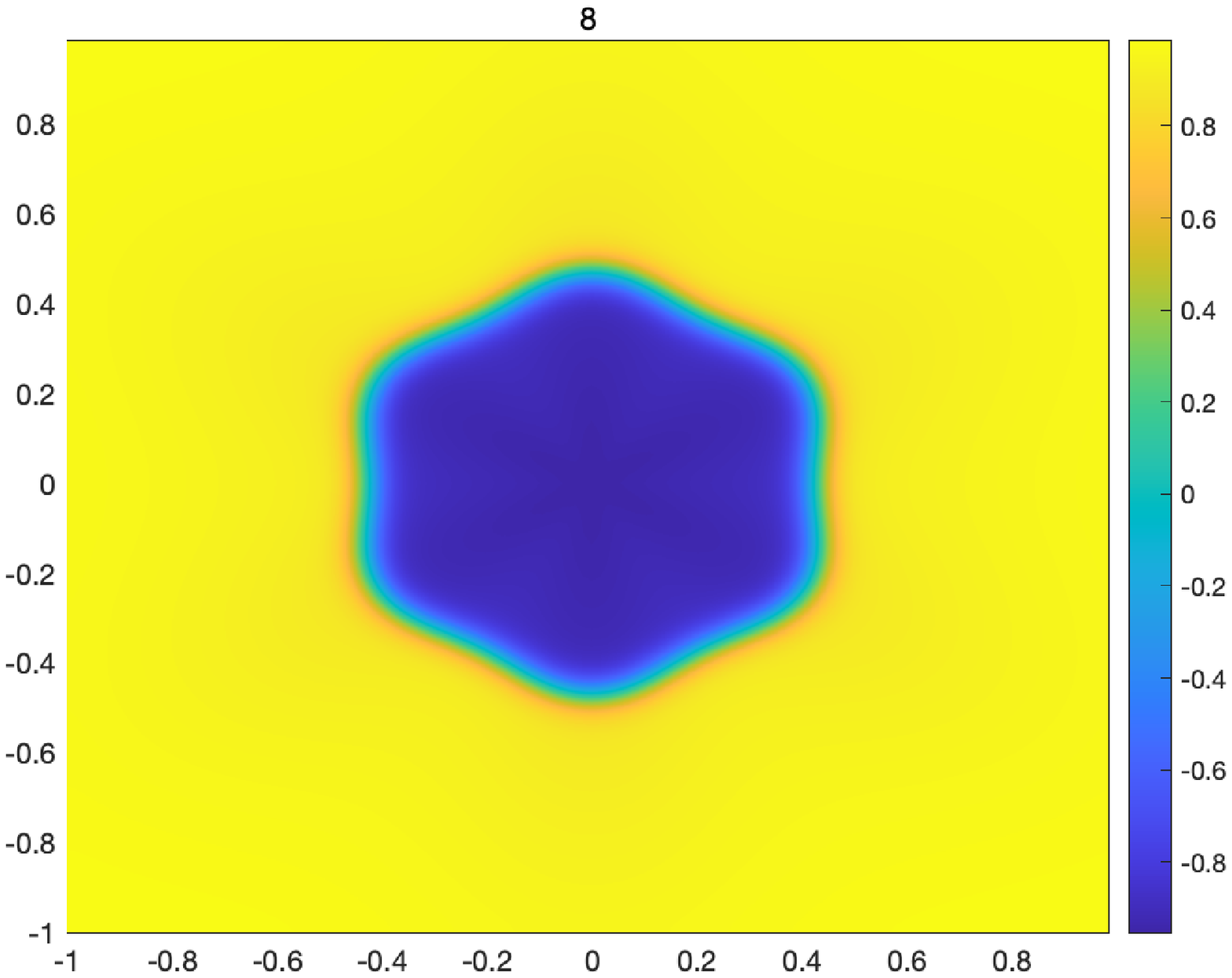}
      \end{minipage}
      }%
      \subfigure[$\alpha = 0.3,~t=64$]{
        \begin{minipage}[t]{0.25\textwidth}
          \centering
          \includegraphics[width=\textwidth]{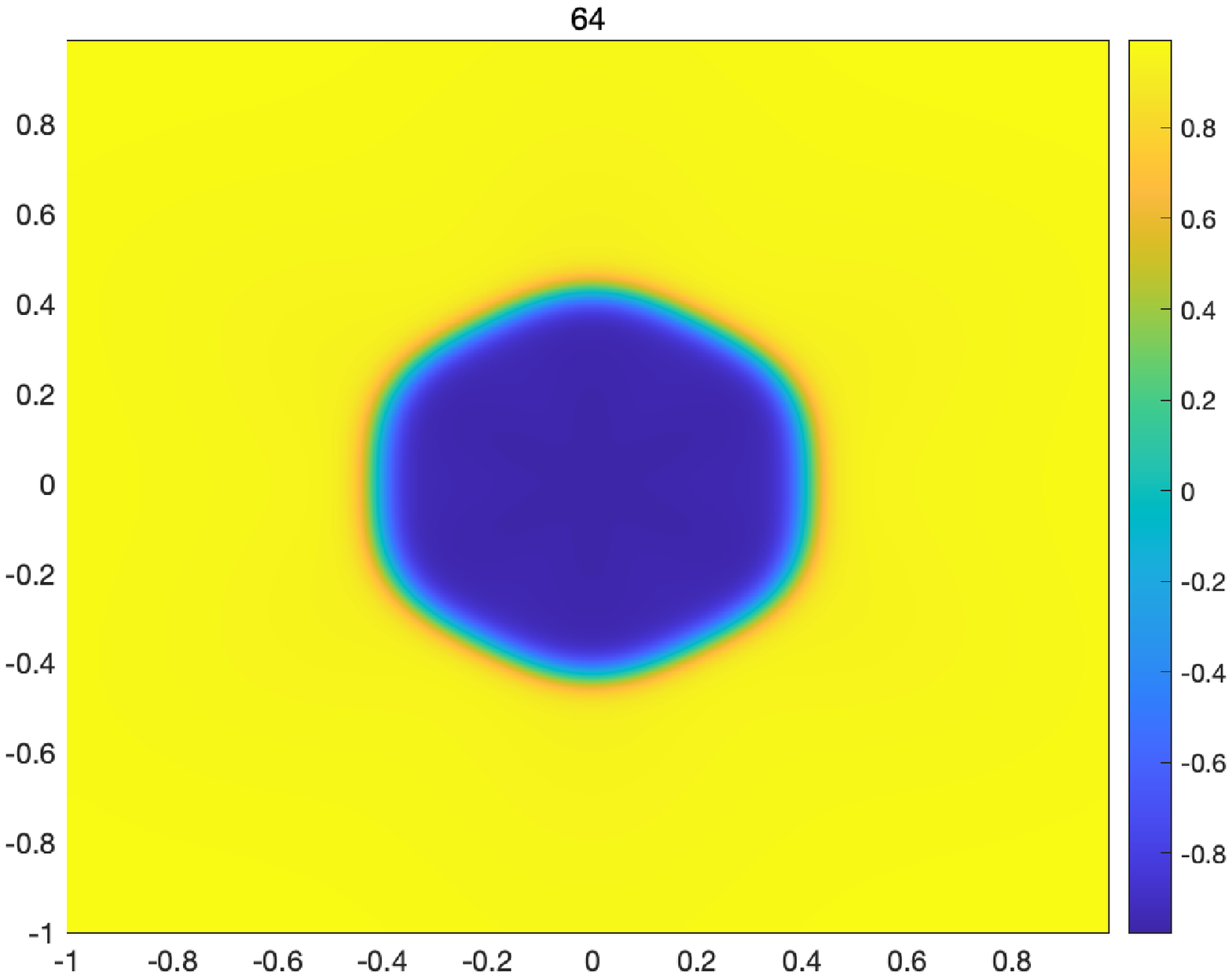}
        \end{minipage}
      }%
      \caption{Snapshots for the TFAC equation for $\alpha = 0.9, 0.6, 0.3$ (top, middle, bottom row, respectively, computed by the L1-IMEX scheme).}\label{fig-ac-evloving}
  \centering
\end{figure}

We study the energy dissipation numerically and several figures are presented to explain the results.
Figure \ref{fig-ac-evloving} illustrates the phases for $\alpha = 0.9,0.6,0.3$ at different time.
Figure \ref{fig-ac-energy-1} verify the energy dissipation property.
We also compare the differences between $\tilde E$ and $E$ when time is large.
In Figure \ref{fig-ac-energy-2} it can be observed that $\tilde E(t)\rightarrow E(t)$ as $t\to\infty$, and the larger $\alpha$ is, the smaller the difference between $\tilde E$ and $E$ is.
\begin{figure}[!h]
  \centering
  \subfigure{
  \begin{minipage}[t]{0.35\textwidth}
  \centering
  \includegraphics[width=\textwidth]{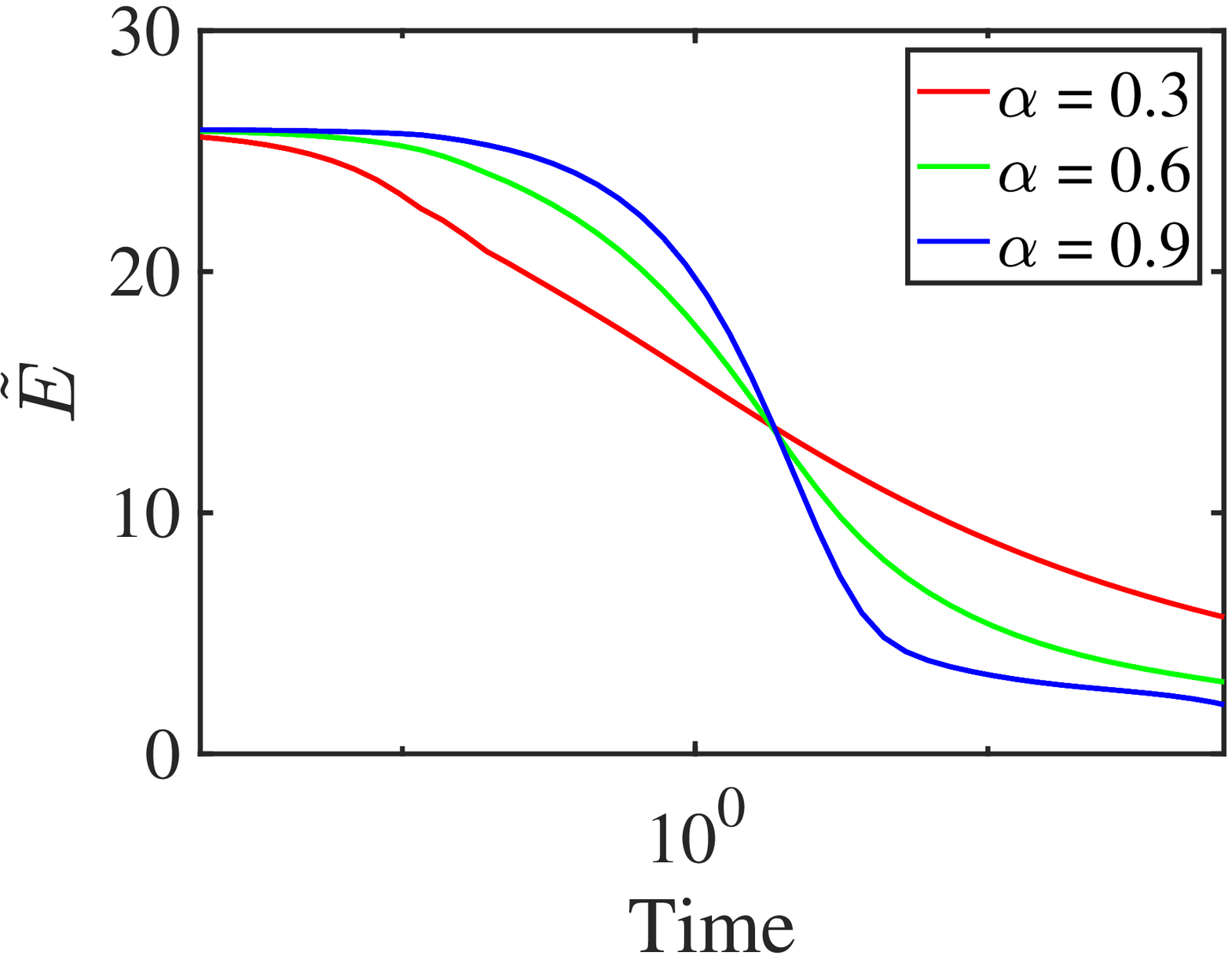}
  \end{minipage}
  }%
  \subfigure{
    \begin{minipage}[t]{0.35\textwidth}
      \centering
      \includegraphics[width=\textwidth]{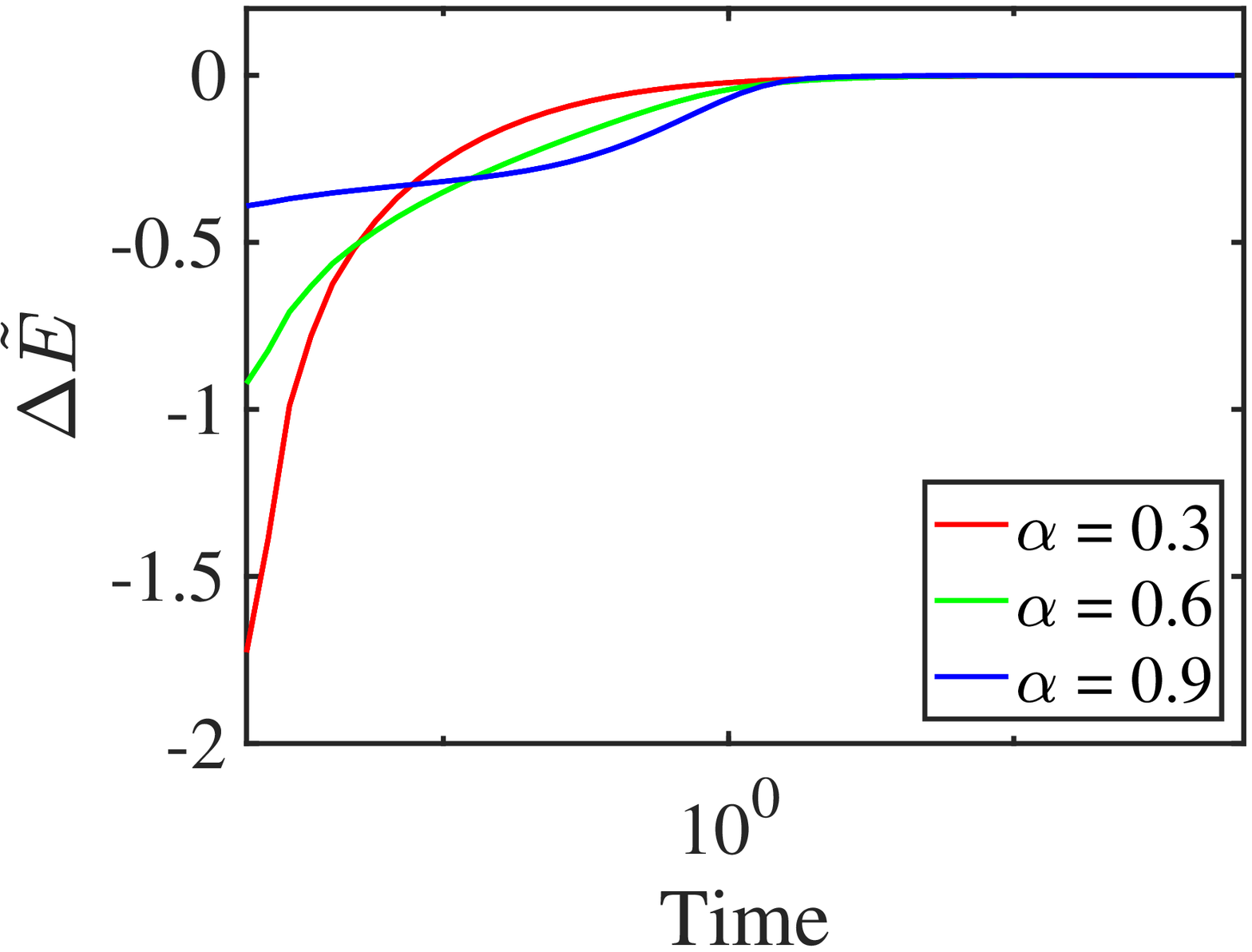}
    \end{minipage}
  }%
      \caption{Evolution of the modified energy $\tilde E$ and $\Delta \tilde E = \tilde E(t)-\tilde E(t-\Delta t)$ for the TFAC equation, computed by the L1-IMEX scheme.}\label{fig-ac-energy-1}
  \centering
  \subfigure{
  \begin{minipage}[t]{0.32\textwidth}
  \centering
  \includegraphics[width=\textwidth]{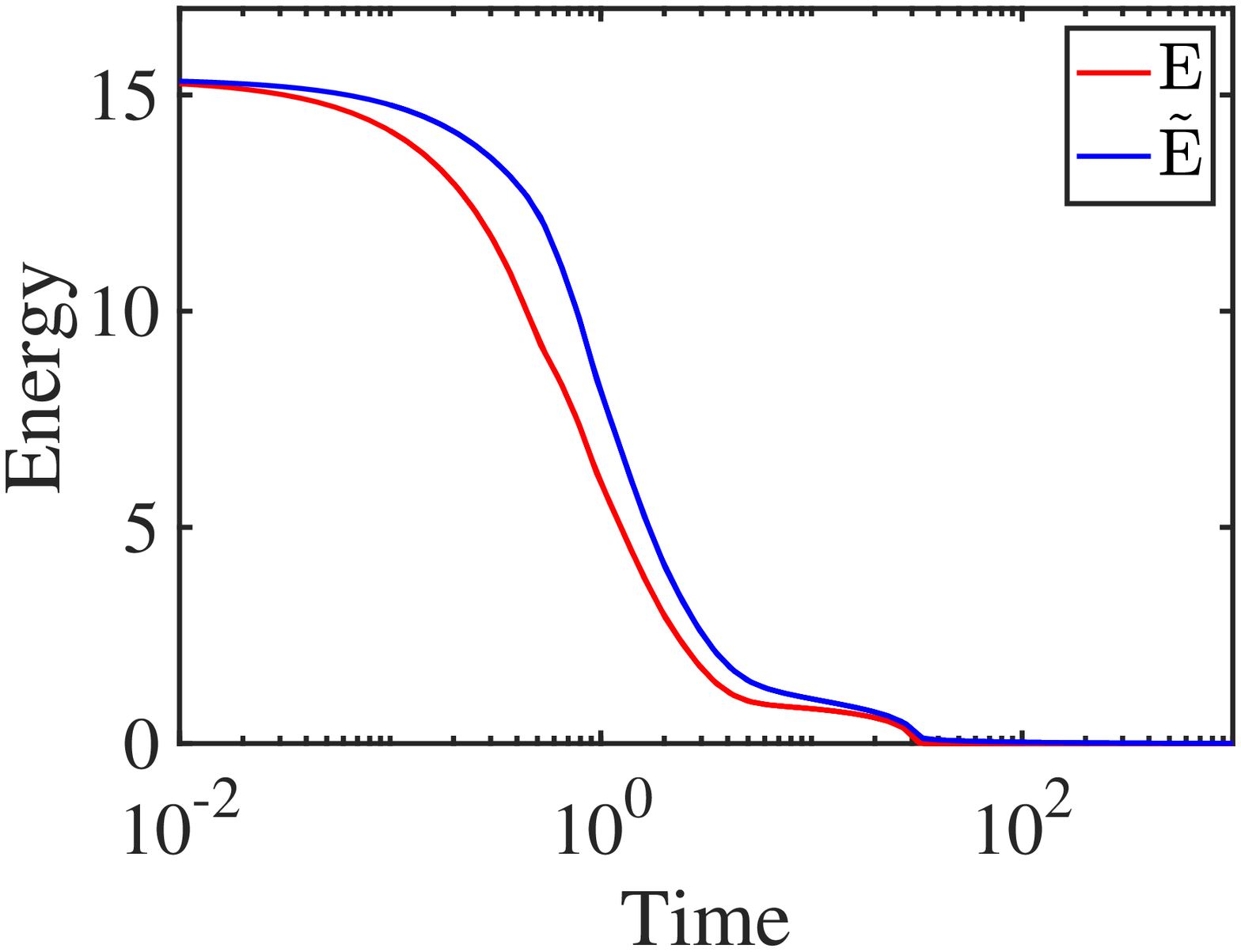}
  \end{minipage}
  }%
  \subfigure{
    \begin{minipage}[t]{0.32\textwidth}
      \centering
      \includegraphics[width=\textwidth]{figures/AClongalpha06.eps}\label{ac-energy-comparision}
    \end{minipage}
  }%
  \subfigure{
    \begin{minipage}[t]{0.32\textwidth}
    \centering
    \includegraphics[width=\textwidth]{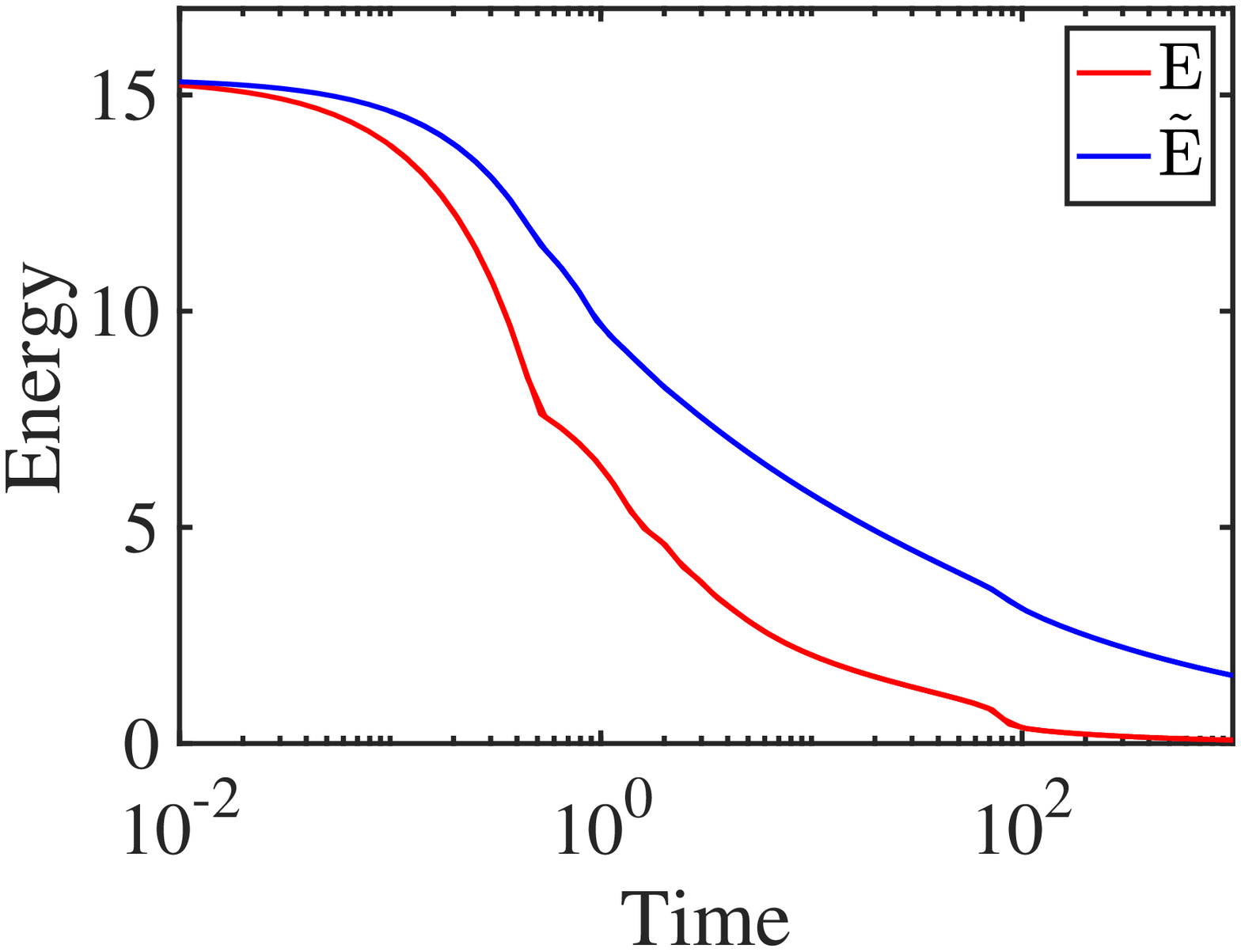}
  \end{minipage}
    }%
      \caption{Comparison of the original energy $E$ and the modified energy $\tilde E$ for the TFAC equation with $\alpha = 0.9, 0.6, 0.3$ (left, middle, right column, respectively), computed by the L1-IMEX scheme.}\label{fig-ac-energy-2}
  \centering
\end{figure}

We also test the stabilized L2 scheme. We take $\Omega = [0,2\pi]^2$, $\varepsilon = 0.1$, and $\gamma = 1$. We use the stabilized L2 scheme \eqref{eq:L2AB} with $S = 1$. Moreover, $128\times128$ Fourier modes and $\tau = 0.05$ are taken. The initial state is given by
\begin{equation}\label{eq:init_7circ}
  \phi_0(x,y) = -1 + \sum_{i=1}^7 f\left( \sqrt{(x-\xi_{1i})^2+(y-\xi_{2i})^2} - r_i\right),
  \end{equation}
  where
  \begin{equation}
  f(s) = \left\{
  \begin{aligned}
  & 2 e^{-\varepsilon^2/s^2} &&\mbox{if } s<0,\\
  & 0 && \mbox{otherwise,}\notag
  \end{aligned}
  \right.
  \end{equation}
and the centers and radii are given by Table \ref{tab:xyr}.
  \begin{table}[htb!]
    \begin{center}
      \caption{Centers $(x_i,y_i)$ and radii $r_i$ in the initial condition \eqref{eq:init_7circ}.}\label{tab:xyr}
      \scalebox{0.95}{
        \begin{tabular}{c|*{7}{c}} \hline\hline
          $i$   &1 & 2     &3   &4  & 5 & 6 & 7 \\  \hline
          $\xi_{1i}$  & $\pi/2$   & $\pi/4$   & $\pi/2$ & $\pi$ & $3\pi/2$ & $\pi$ & $3\pi/2$\\
        $\xi_{2i}$  & $\pi/2$   & $3\pi/4$   & $5\pi/4$ & $\pi/4$ & $\pi/4$ & $\pi$ & $3\pi/2$ \\
        $r_i$  & $\pi/5$   & $2\pi/15$   & $2\pi/15$ & $\pi/10$ & $\pi/10$ & $\pi/4$ & $\pi/4$ \\\hline\hline
        \end{tabular}}
    \end{center}
  \end{table}

The results are depicted in Figure \ref{fig-ac-evloving-L2} and Figure \ref{fig-ac-energy-L2-2}. 
It can be seen that $\tilde E$ of the L2 scheme is also decreasing w.r.t. time and when $\alpha$ is small, the decreasing rate is slow. 
The results of the L2 scheme are similar to the results of the L1 scheme.

  \begin{figure}[!t]
    \centering
    \subfigure[$\alpha=0.8,~t=2.5$]{
    \begin{minipage}[t]{0.25\textwidth}
    \centering
    \includegraphics[width=\textwidth]{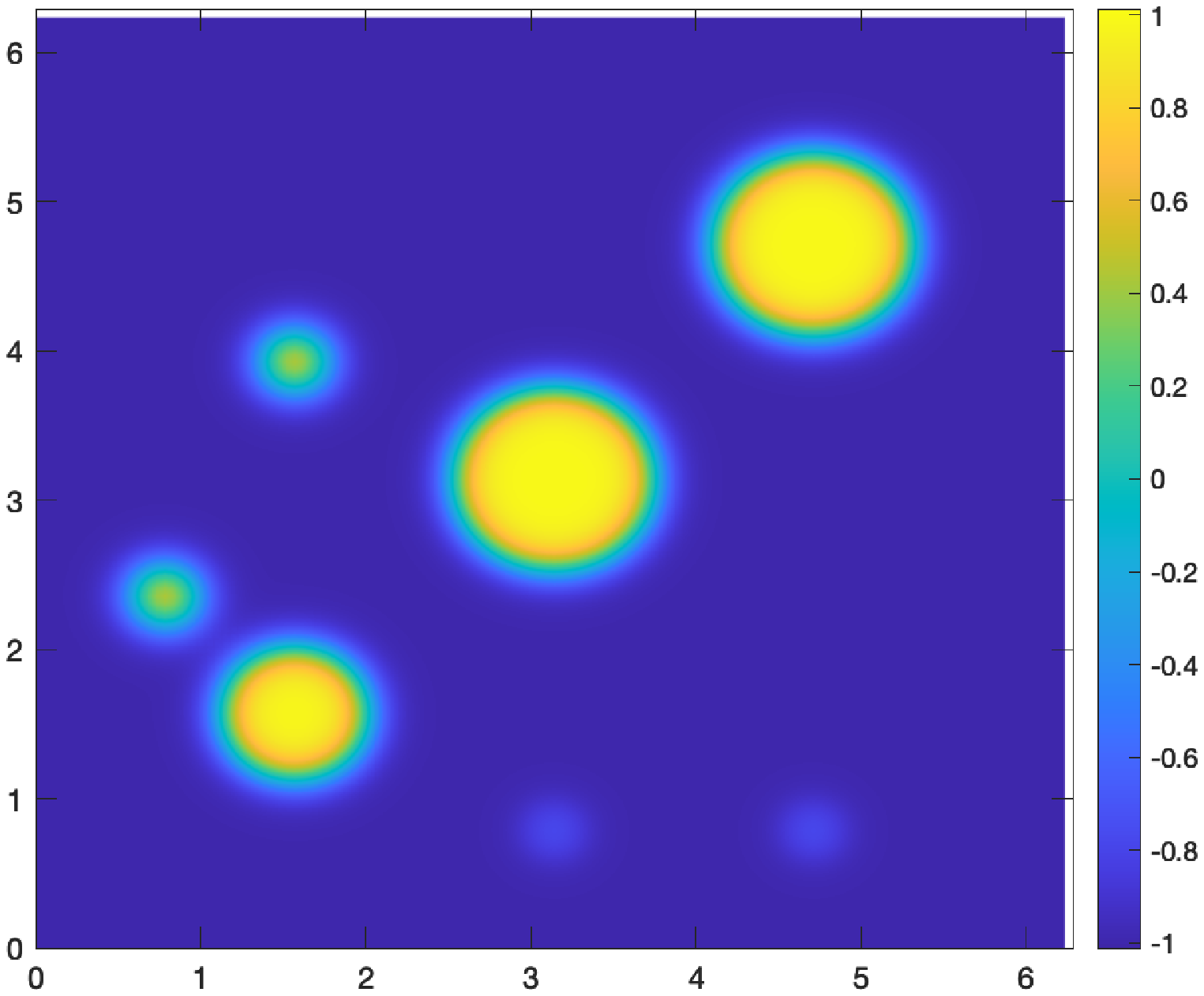}
    \end{minipage}
    }%
    \subfigure[$\alpha=0.8,~t=10$]{
    \begin{minipage}[t]{0.25\textwidth}
      \centering
      \includegraphics[width=\textwidth]{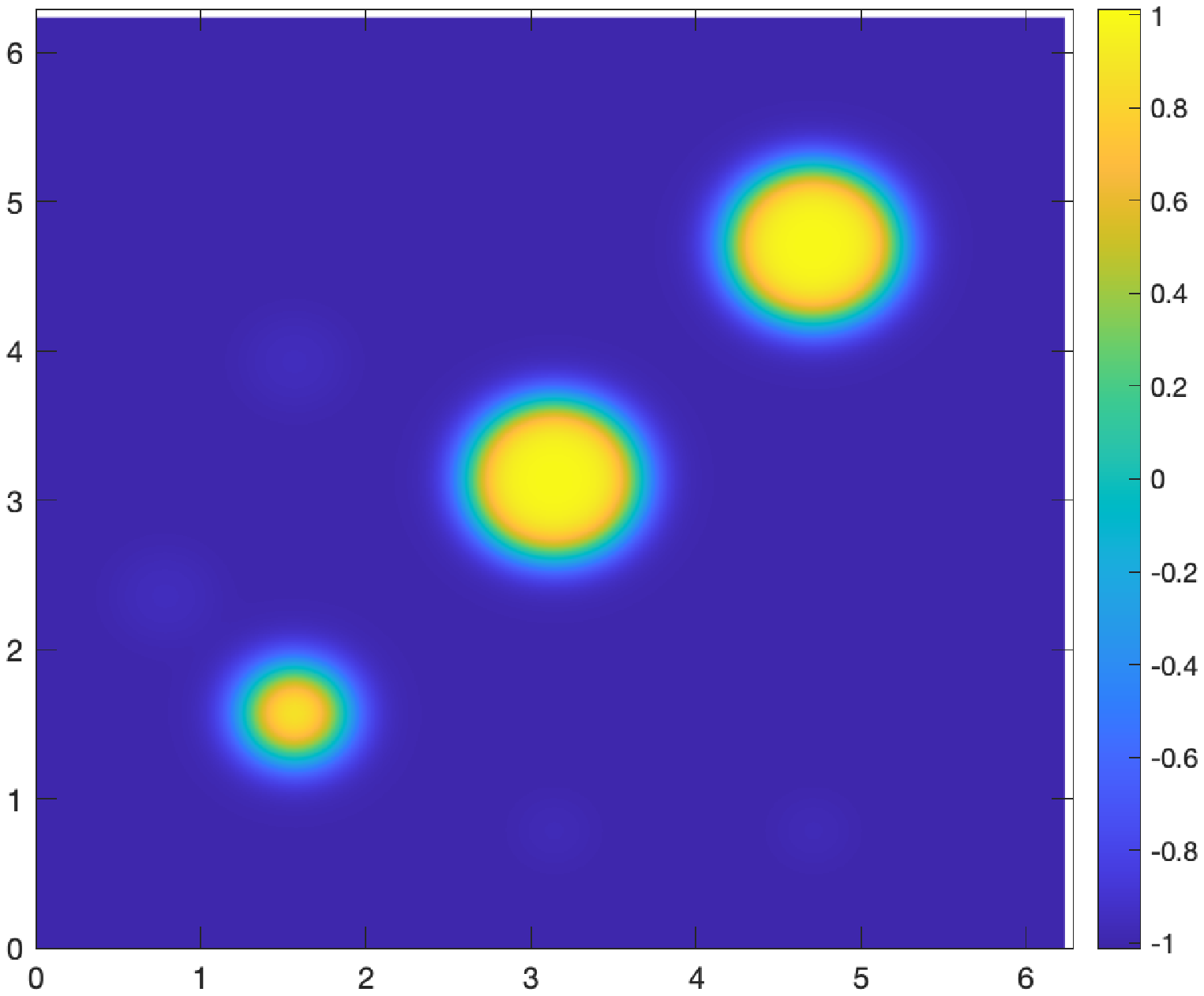}
    \end{minipage}
    }%
    \subfigure[$\alpha=0.8,~t=50$]{
      \begin{minipage}[t]{0.25\textwidth}
        \centering
        \includegraphics[width=\textwidth]{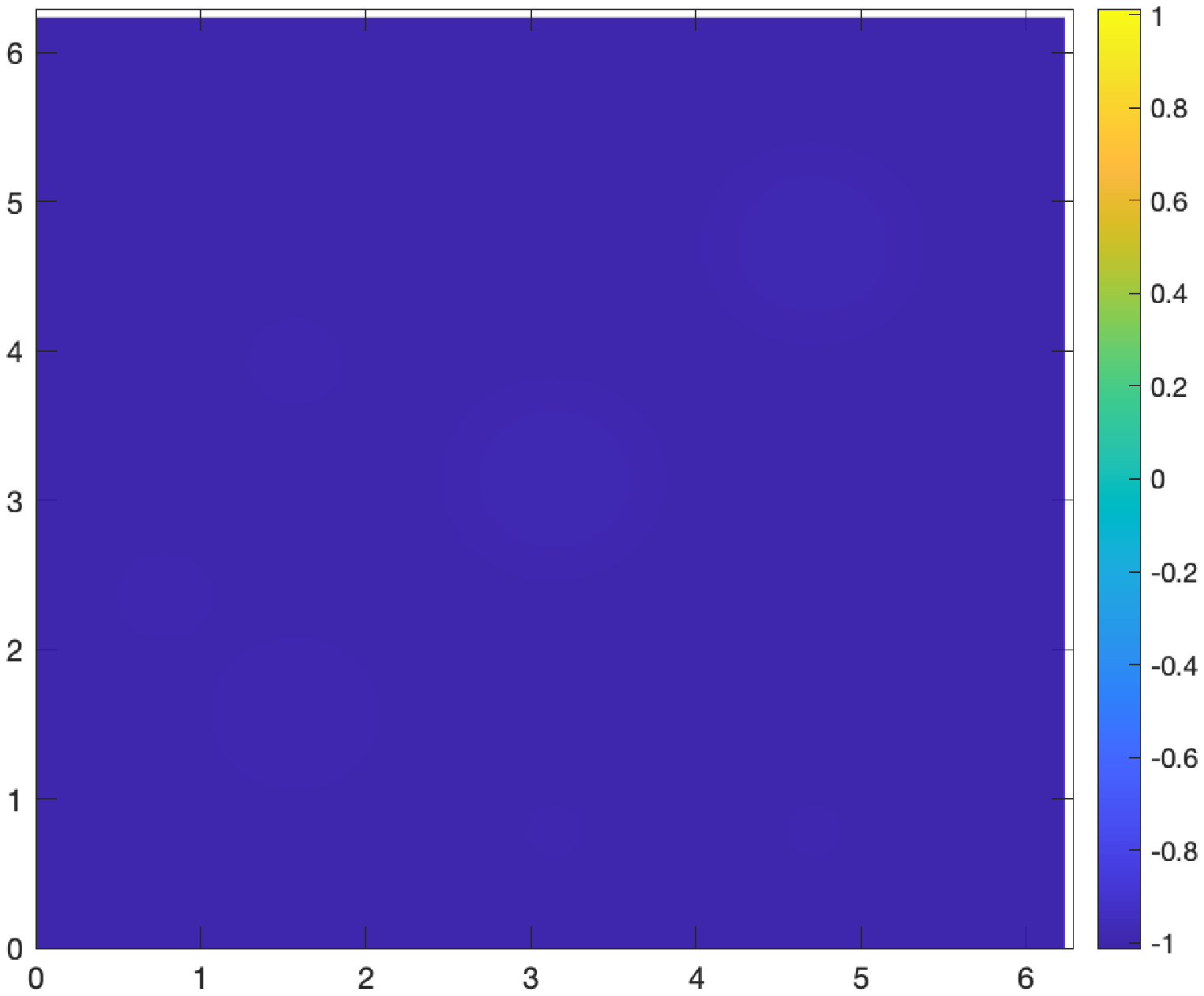}
      \end{minipage}
    }%
  
    \subfigure[$\alpha=0.6,~t=2.5$]{
      \begin{minipage}[t]{0.25\textwidth}
      \centering
      \includegraphics[width=\textwidth]{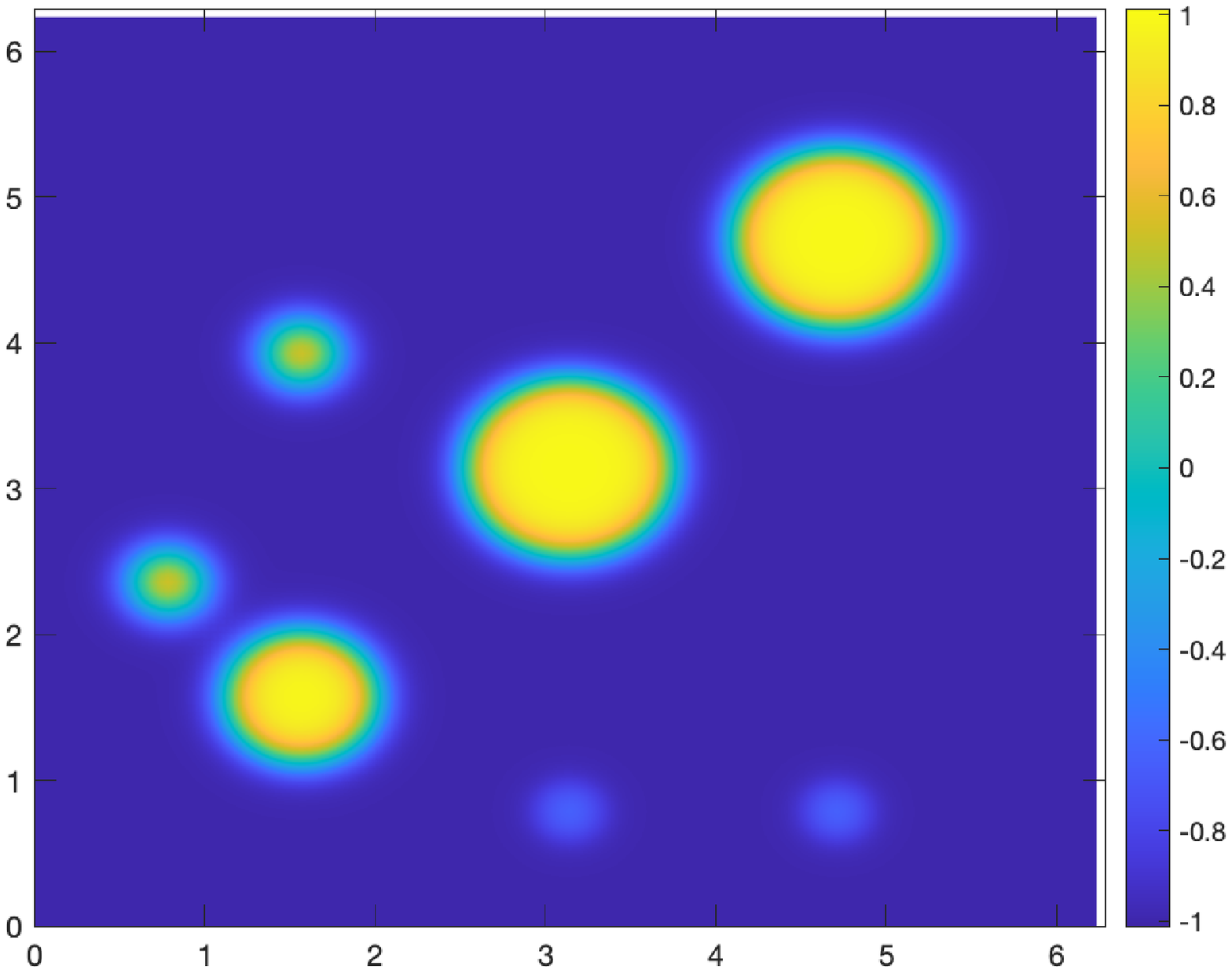}
      \end{minipage}
      }%
      \subfigure[$\alpha=0.6,~t=10$]{
      \begin{minipage}[t]{0.25\textwidth}
        \centering
        \includegraphics[width=\textwidth]{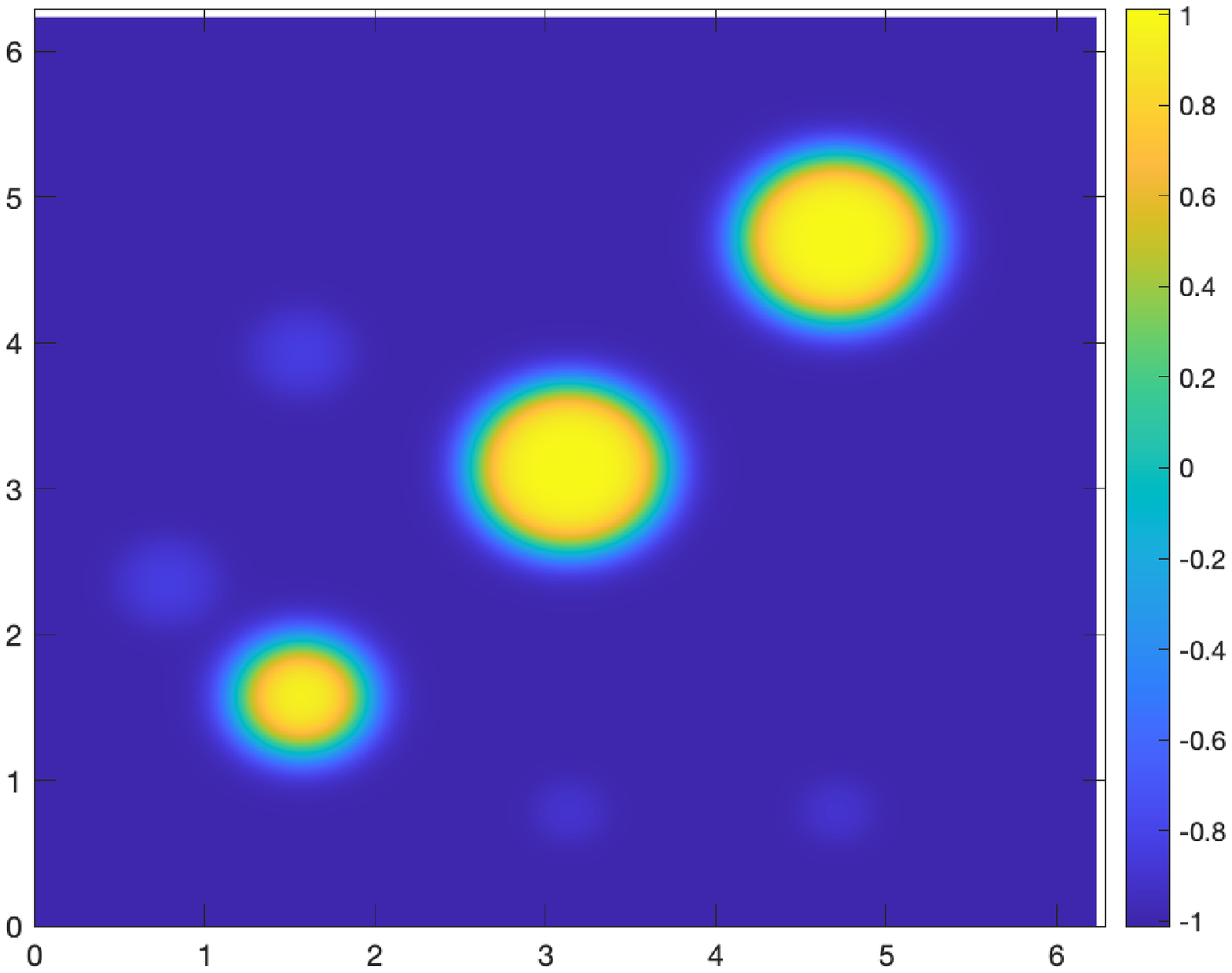}
      \end{minipage}
      }%
      \subfigure[$\alpha=0.6,~t=50$]{
        \begin{minipage}[t]{0.25\textwidth}
          \centering
          \includegraphics[width=\textwidth]{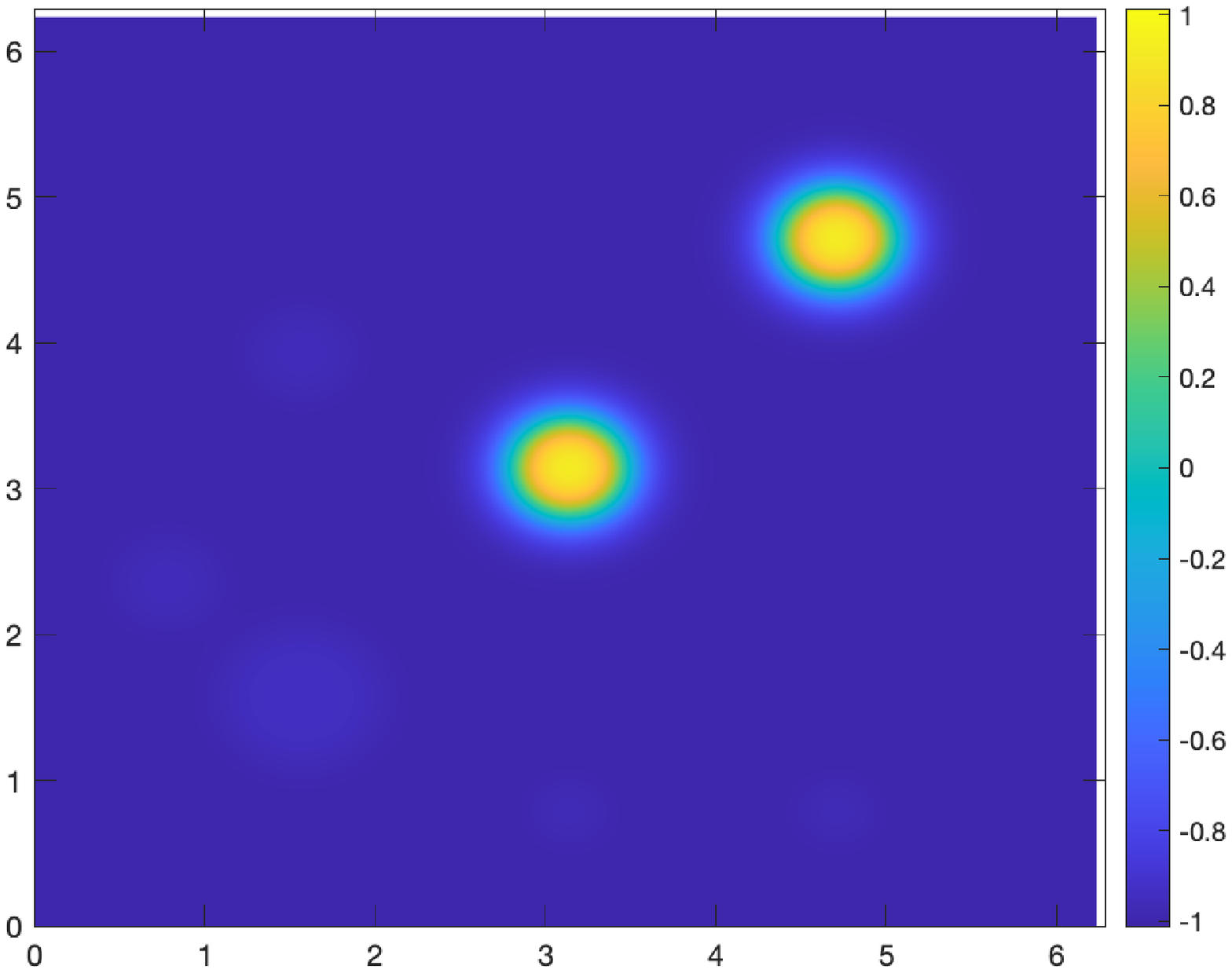}
        \end{minipage}
      }%
  
      \subfigure[$\alpha=0.4,~t=2.5$]{
        \begin{minipage}[t]{0.25\textwidth}
        \centering
        \includegraphics[width=\textwidth]{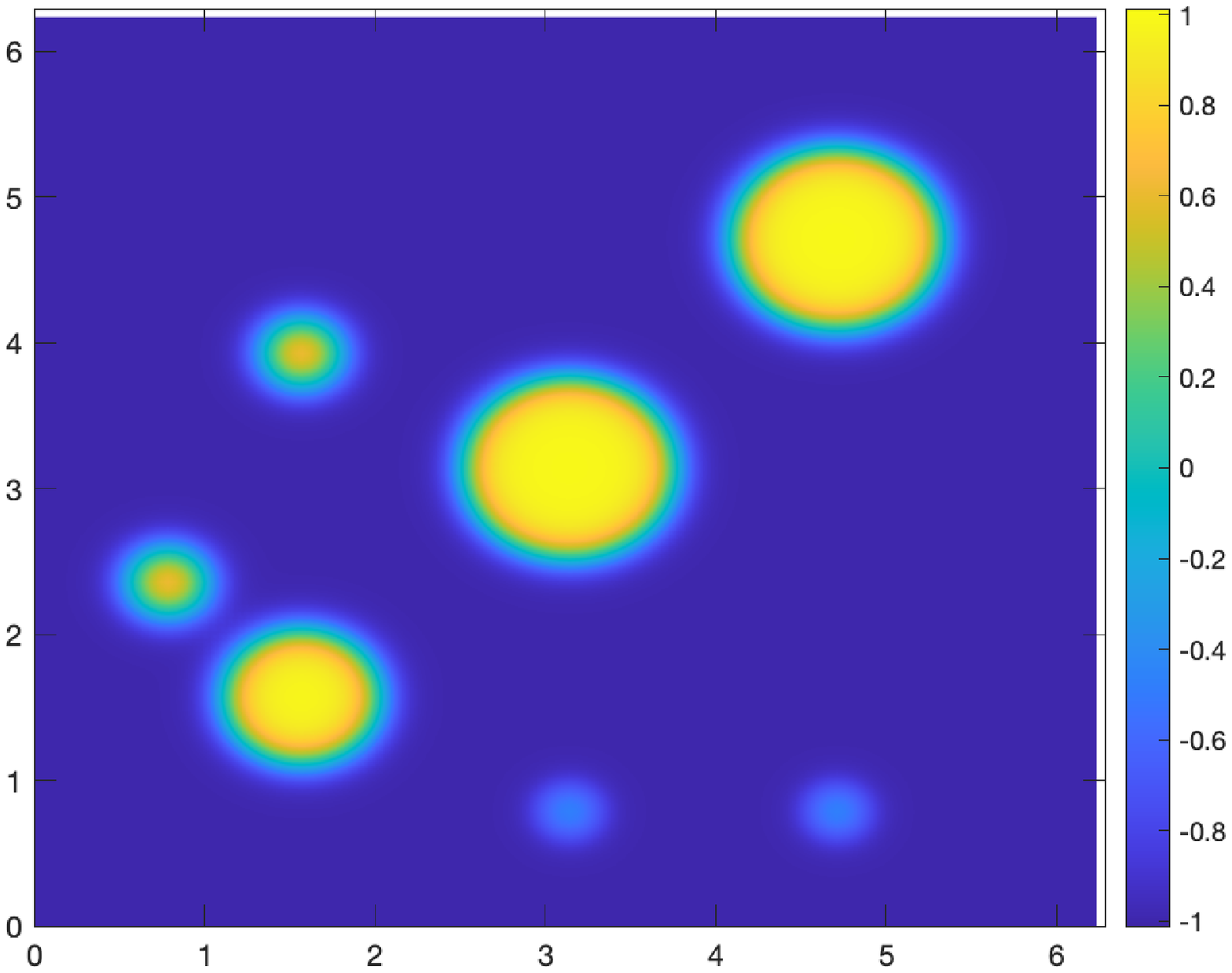}
        \end{minipage}
        }%
        \subfigure[$\alpha=0.4,~t=10$]{
        \begin{minipage}[t]{0.25\textwidth}
          \centering
          \includegraphics[width=\textwidth]{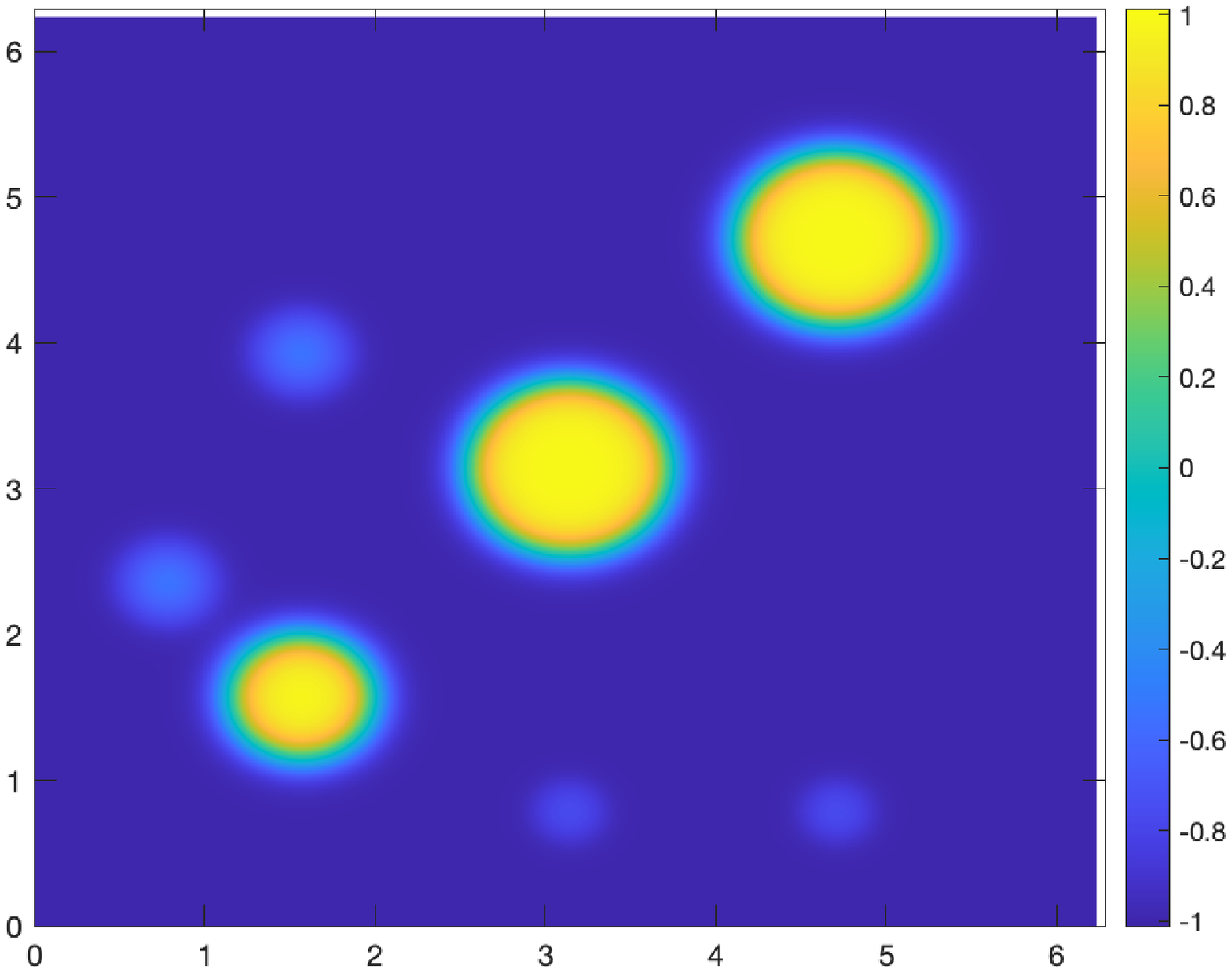}
        \end{minipage}
        }%
        \subfigure[$\alpha=0.4,~t=50$]{
          \begin{minipage}[t]{0.25\textwidth}
            \centering
            \includegraphics[width=\textwidth]{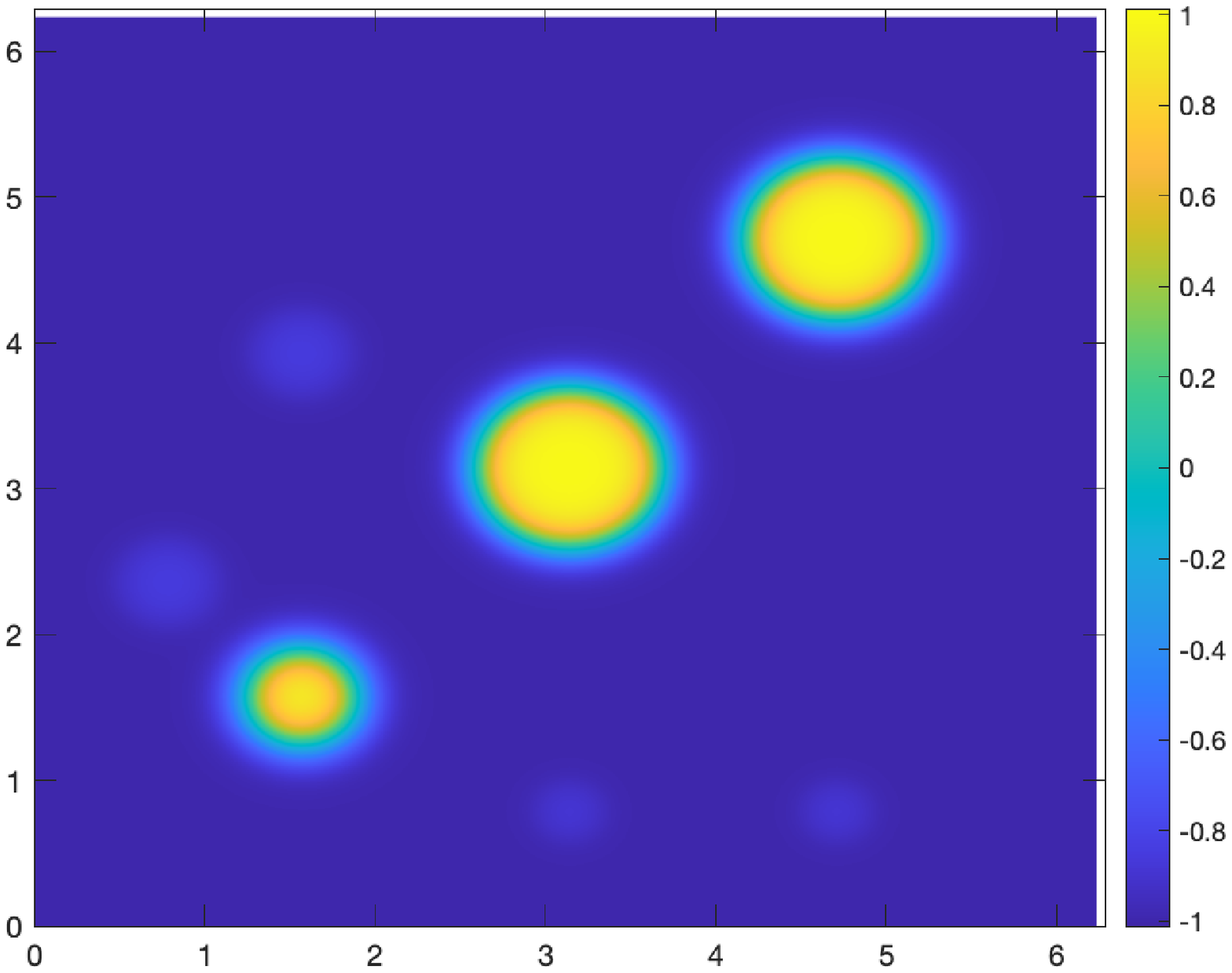}
          \end{minipage}
        }%
        \caption{Snapshots for the TFAC equation for $\alpha = 0.8, 0.6, 0.4$ (top, middle, bottom row, respectively, computed by the L2-IMEX scheme).}\label{fig-ac-evloving-L2}
    \centering
  \end{figure}

  \begin{figure}[!h]
    \centering
    \subfigure{
    \begin{minipage}[t]{0.32\textwidth}
    \centering
    \includegraphics[width=\textwidth]{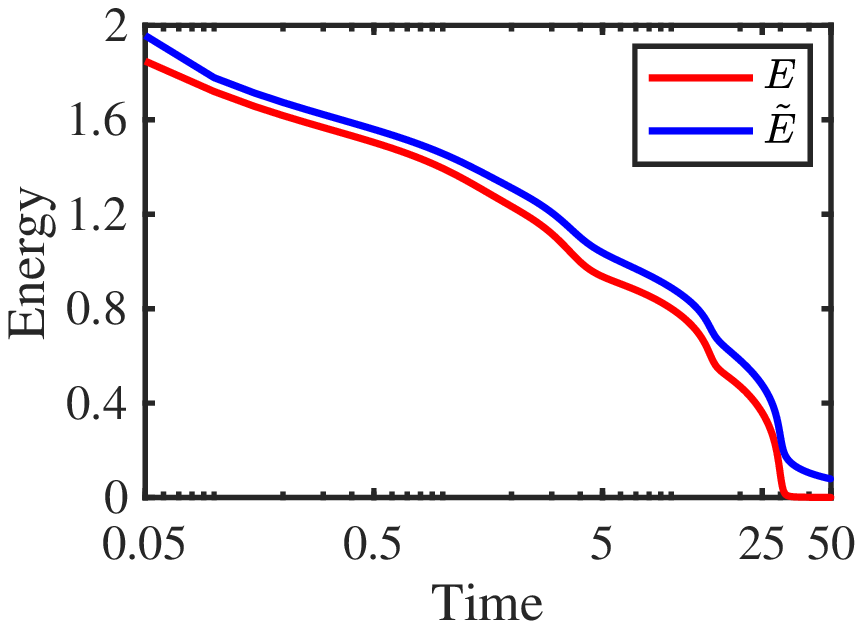}
    \end{minipage}
    }%
    \subfigure{
      \begin{minipage}[t]{0.32\textwidth}
        \centering
        \includegraphics[width=\textwidth]{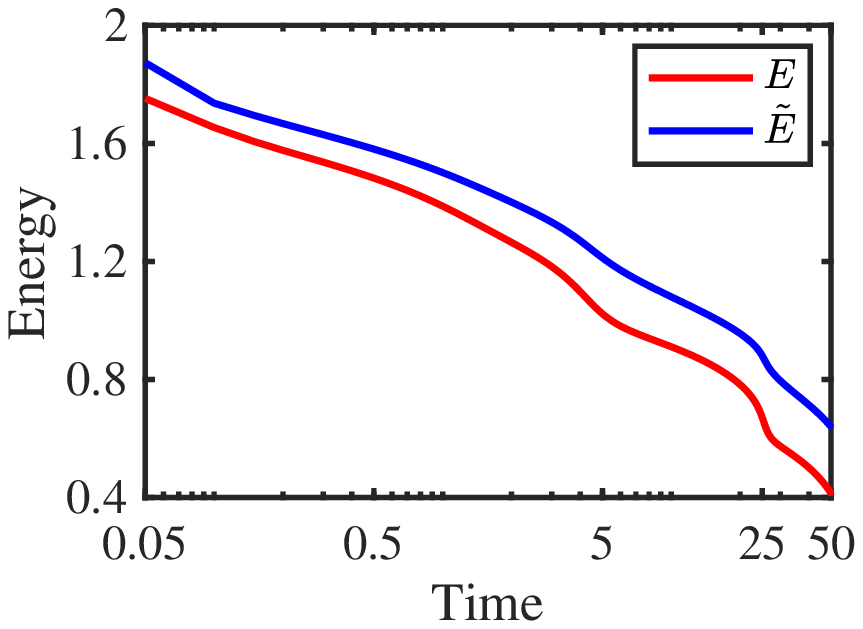}
      \end{minipage}
    }%
    \subfigure{
      \begin{minipage}[t]{0.32\textwidth}
      \centering
      \includegraphics[width=\textwidth]{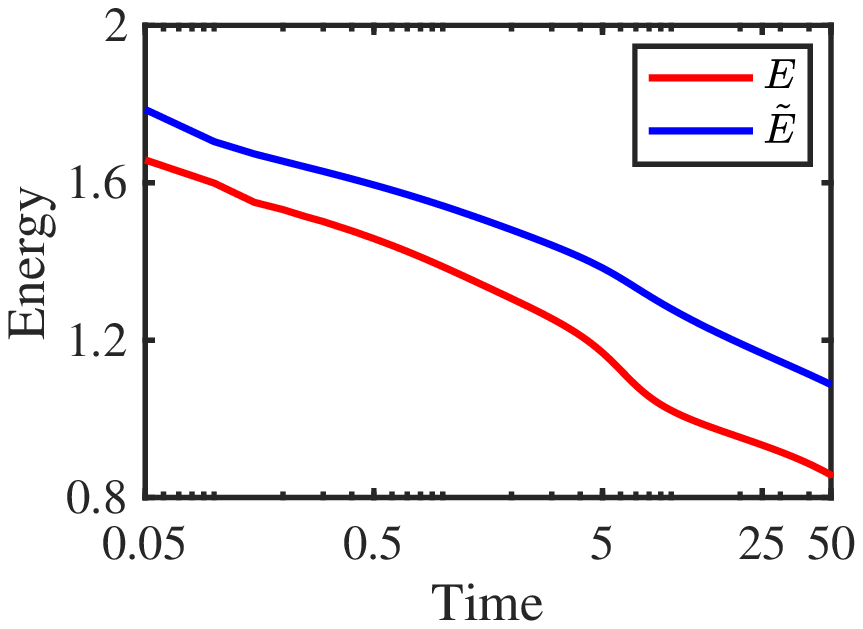}
    \end{minipage}
      }%
        \caption{Comparison of the original energy $E$ and the modified energy $\tilde E$ for the TFAC equation with $\alpha = 0.8, 0.6, 0.4$ (left, middle, right column, respectively), computed by the L2-IMEX scheme.}\label{fig-ac-energy-L2-2}
    \centering
  \end{figure}

\subsection{TFCH equation}
For the TFCH equation, we take $L_x = L_y = 2\pi$, $\varepsilon = 0.05$.
{We set the diffusion constant to be $\gamma = 0.02$ and $S$ in the stabilized scheme \eqref{schemeac} to be $S = 5\gamma = 0.1$.}
Moreover, $128\times 128$ Fourier modes and $\tau = 0.1$ are taken The initial state is taken as an uniformly random distribution field in $[-1,1]$.
\begin{figure}[!h]
  \centering
  \subfigure[$\alpha =0.9,~t=1$]{
  \begin{minipage}[t]{0.25\textwidth}
  \centering
  \includegraphics[width=\textwidth]{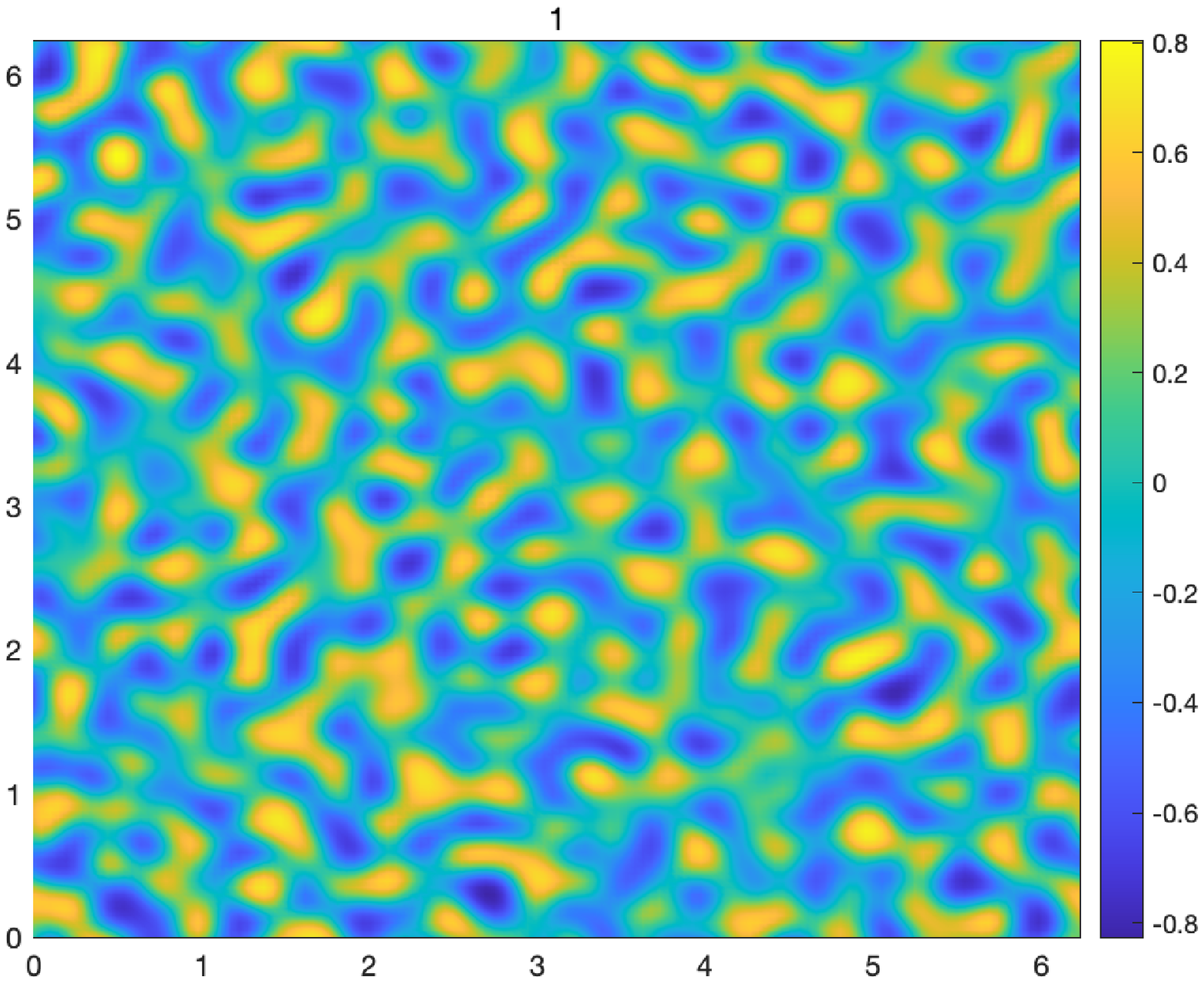}
  \end{minipage}
  }%
  \subfigure[$\alpha =0.9,~t=8$]{
  \begin{minipage}[t]{0.25\textwidth}
    \centering
    \includegraphics[width=\textwidth]{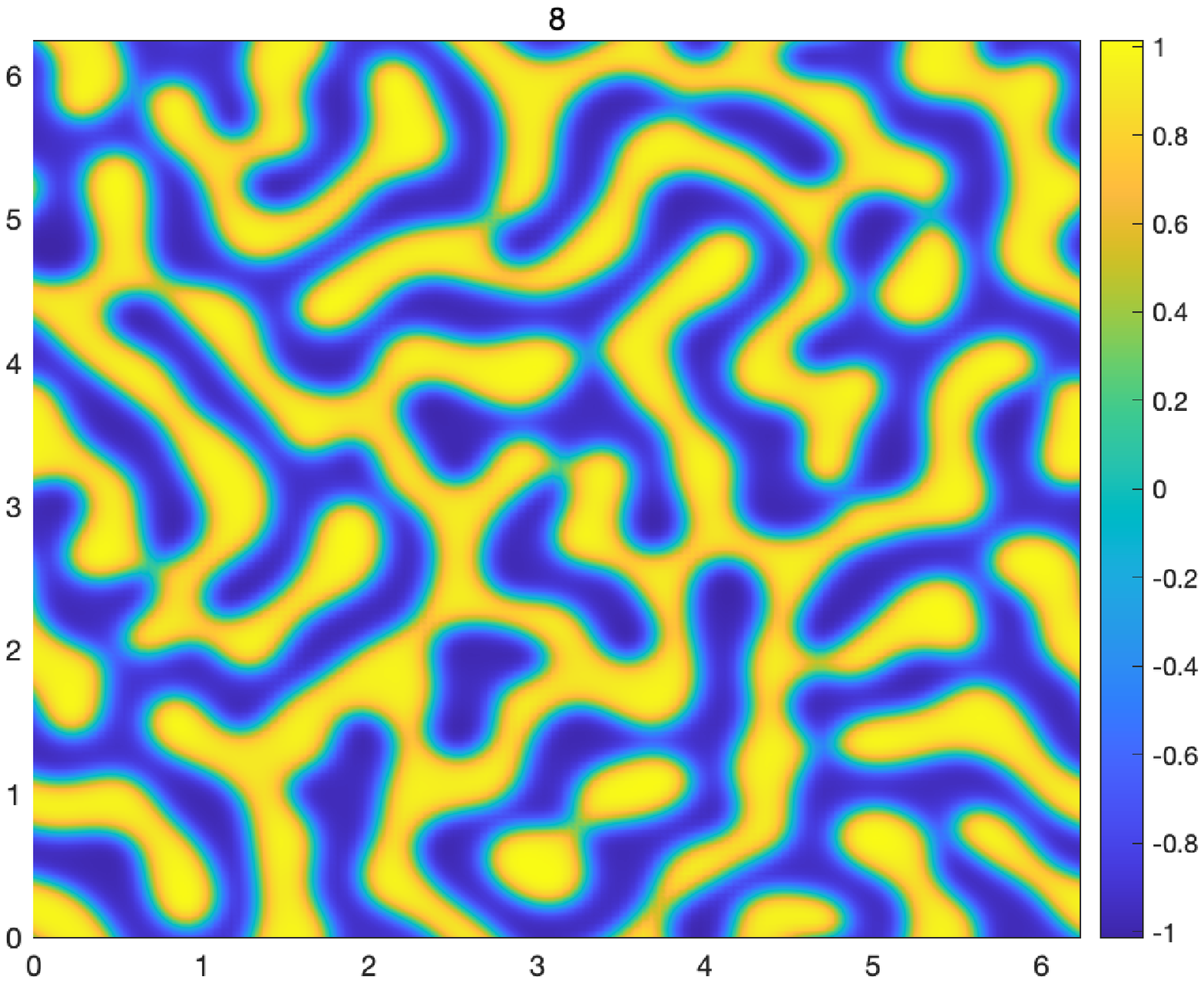}
  \end{minipage}
  }%
  \subfigure[$\alpha =0.9,~t=64$]{
    \begin{minipage}[t]{0.25\textwidth}
      \centering
      \includegraphics[width=\textwidth]{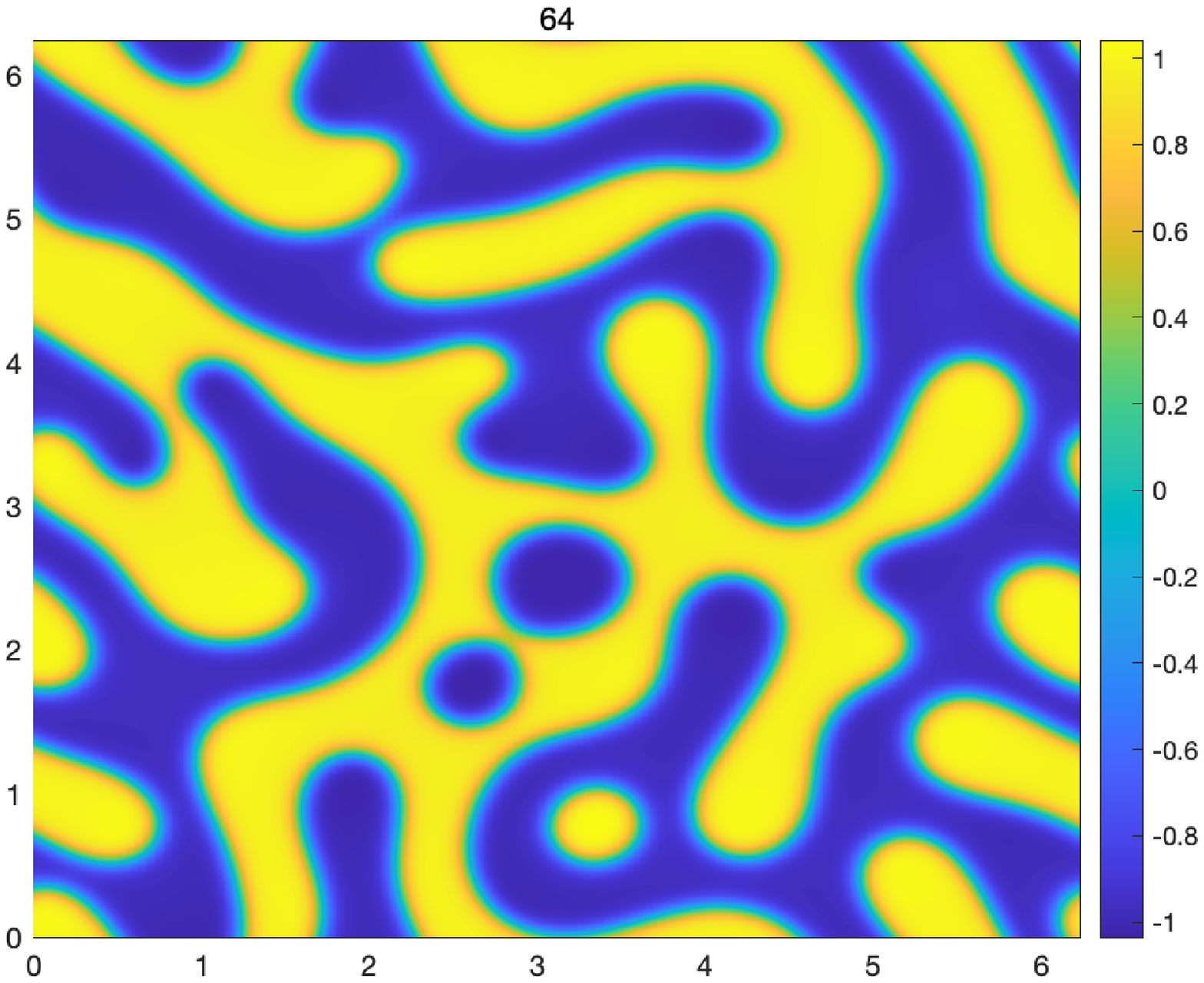}
    \end{minipage}
  }%

  \subfigure[$\alpha =0.6,~t=1$]{
    \begin{minipage}[t]{0.25\textwidth}
    \centering
    \includegraphics[width=\textwidth]{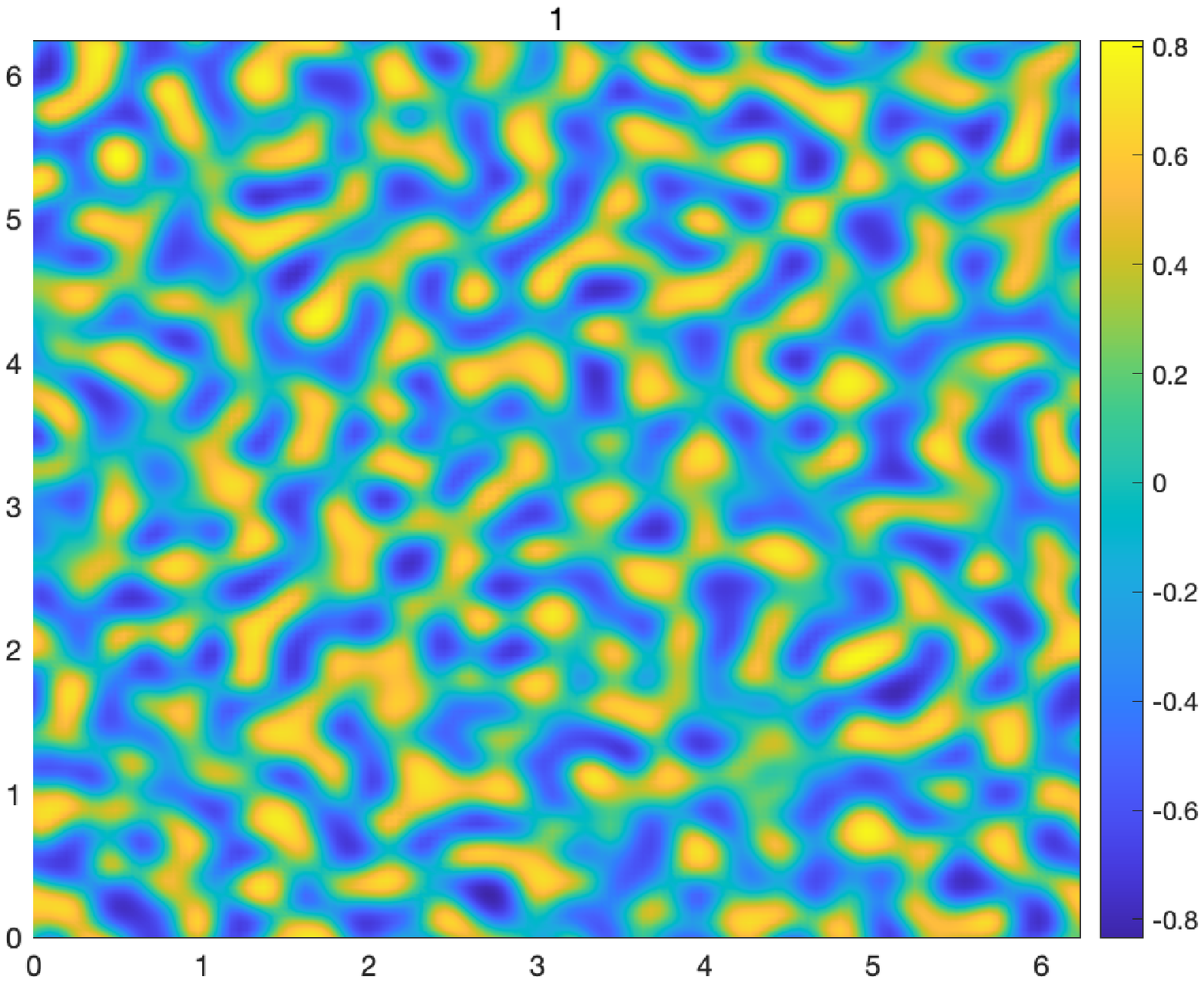}
    \end{minipage}
    }%
    \subfigure[$\alpha =0.6,~t=8$]{
    \begin{minipage}[t]{0.25\textwidth}
      \centering
      \includegraphics[width=\textwidth]{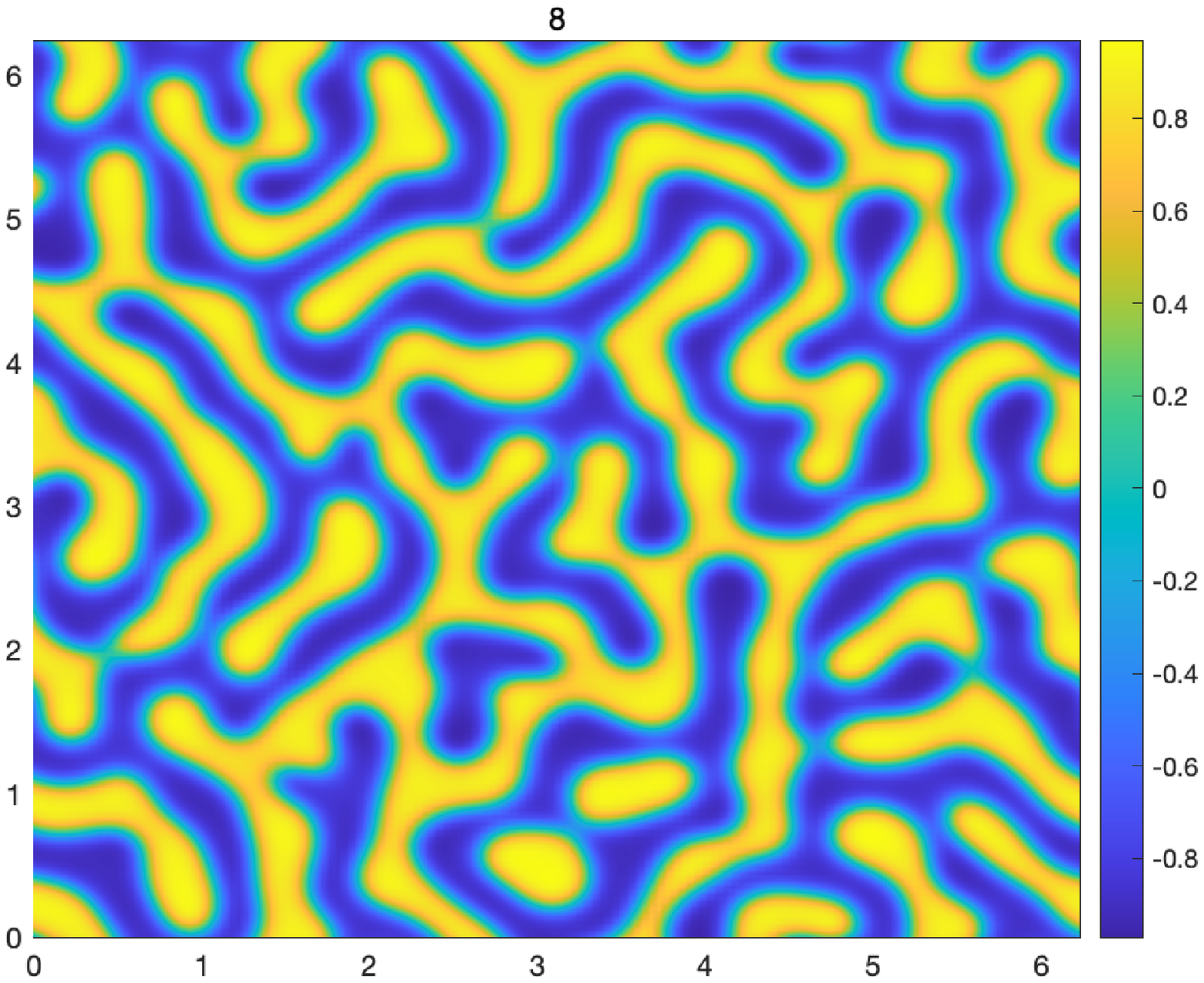}
    \end{minipage}
    }%
    \subfigure[$\alpha =0.6,~t=64$]{
      \begin{minipage}[t]{0.25\textwidth}
        \centering
        \includegraphics[width=\textwidth]{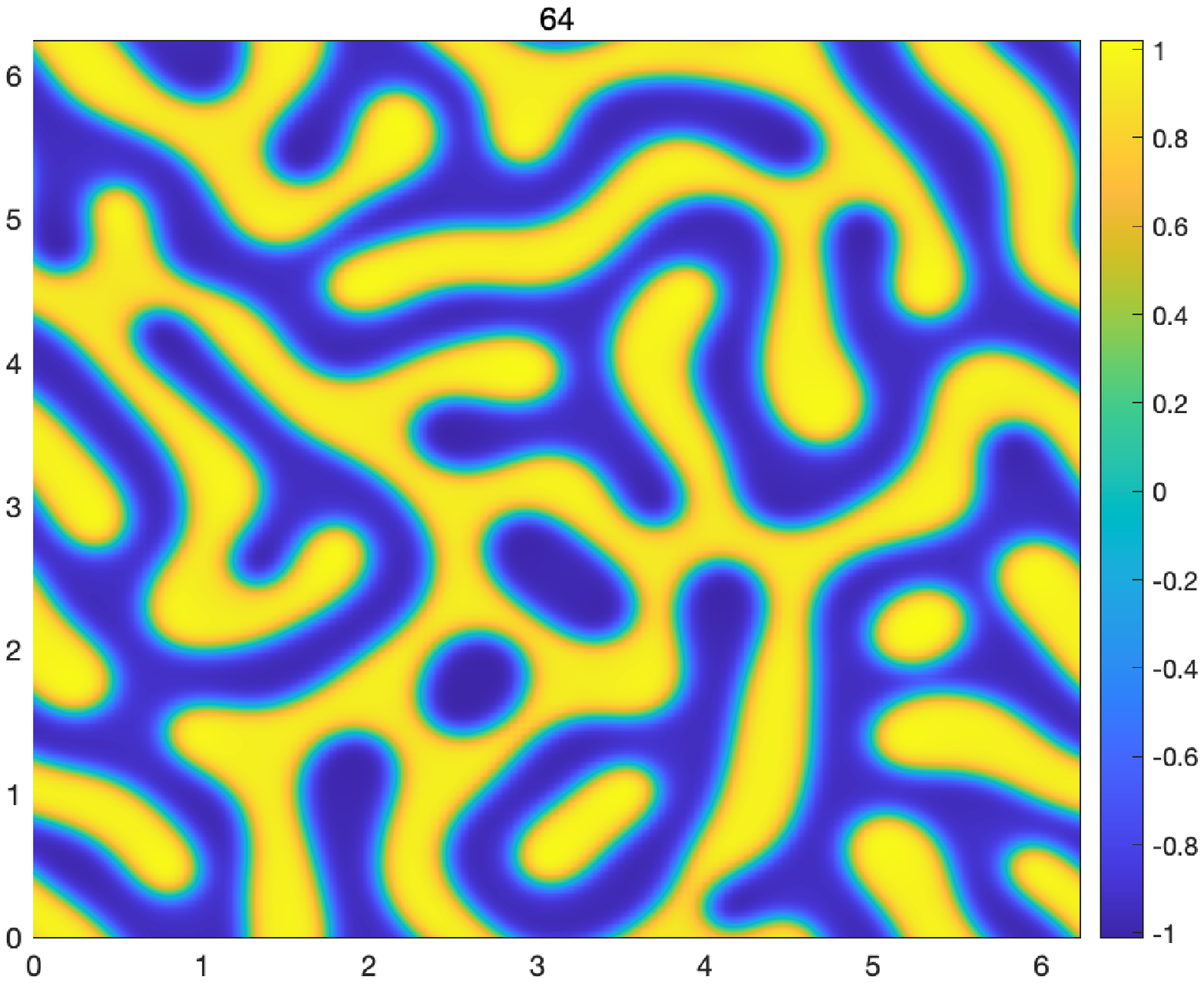}
      \end{minipage}
    }%

    \subfigure[$\alpha =0.3,~t=1$]{
      \begin{minipage}[t]{0.25\textwidth}
      \centering
      \includegraphics[width=\textwidth]{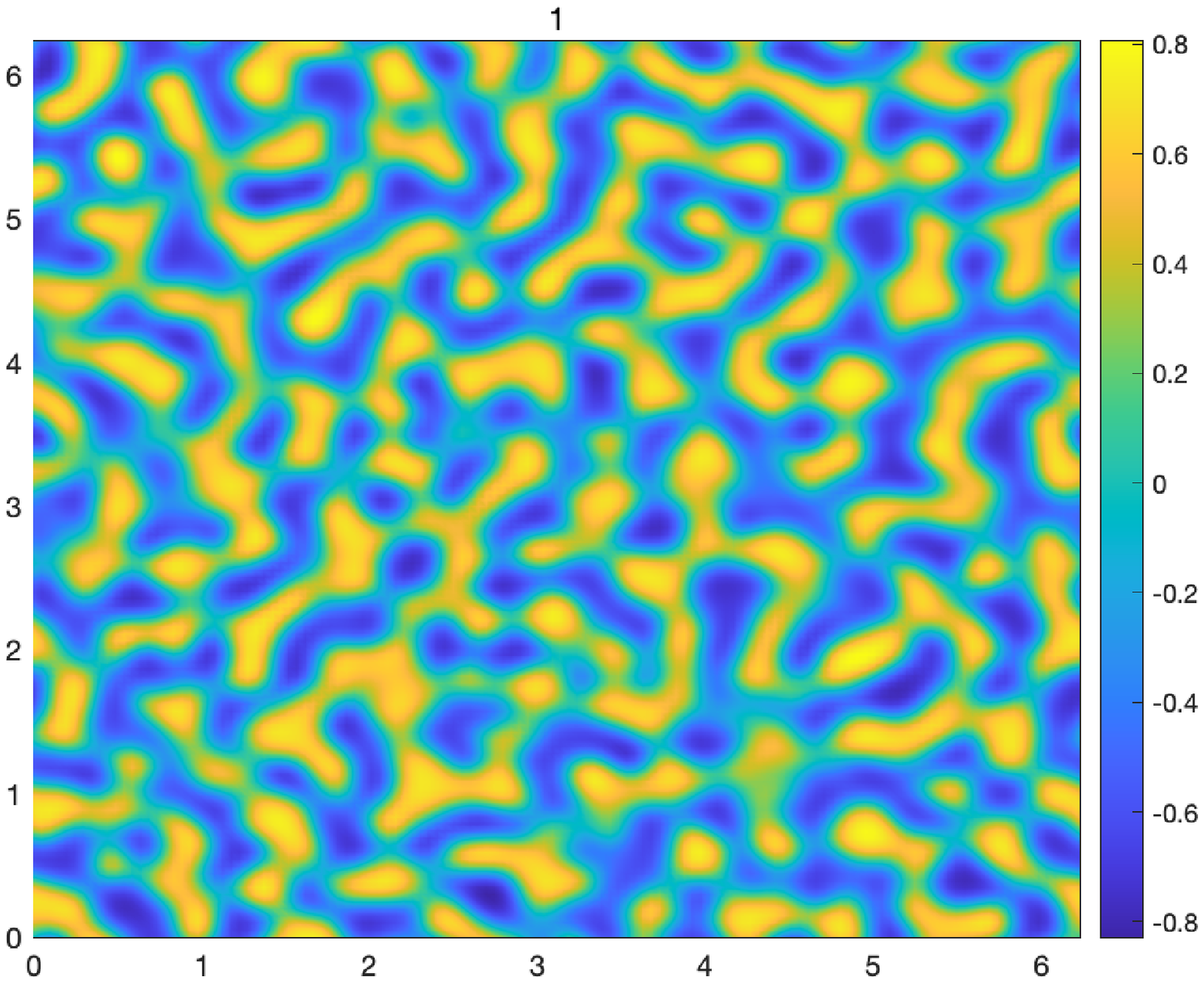}
      \end{minipage}
      }%
      \subfigure[$\alpha =0.3,~t=8$]{
      \begin{minipage}[t]{0.25\textwidth}
        \centering
        \includegraphics[width=\textwidth]{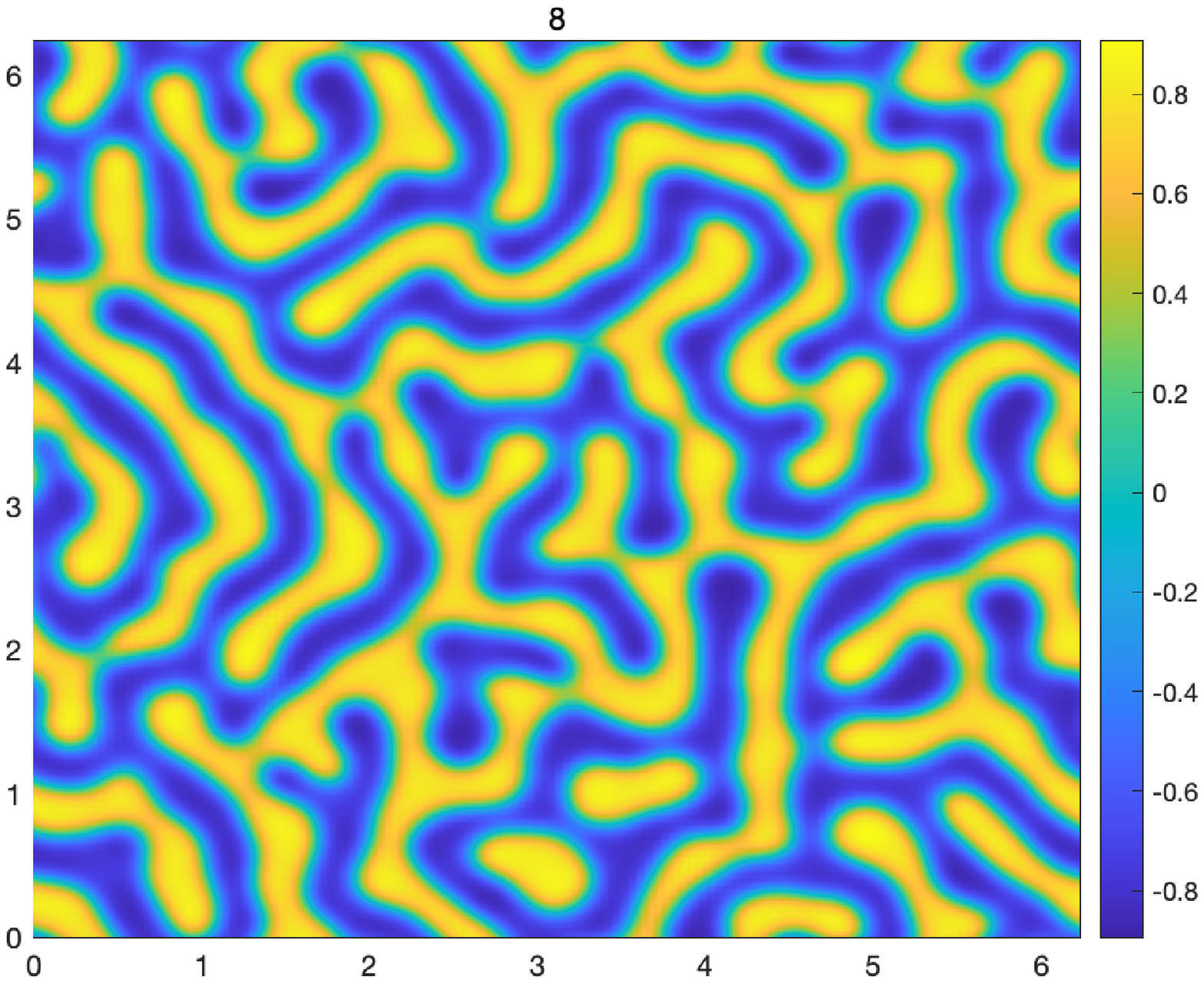}
      \end{minipage}
      }%
      \subfigure[$\alpha =0.3,~t=64$]{
        \begin{minipage}[t]{0.25\textwidth}
          \centering
          \includegraphics[width=\textwidth]{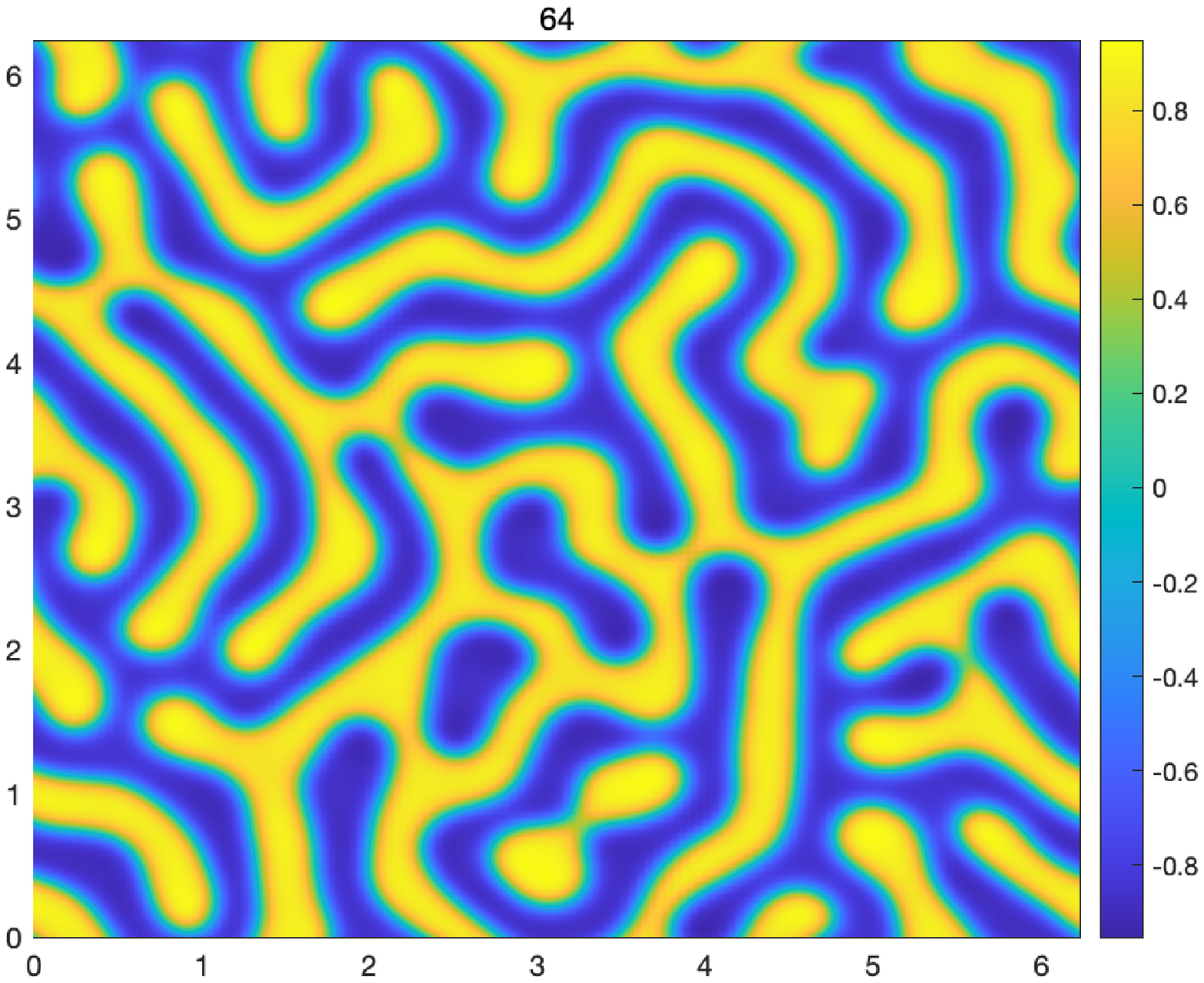}
        \end{minipage}
      }%
      \caption{Snapshots for the TFCH equation for $\alpha = 0.9, 0.6, 0.3$ (top, middle, bottom row, respectively), computed by the L1-IMEX scheme.}\label{fig-ch-energy-1}
  \centering
\end{figure}

Figure \ref{fig-ch-energy-1} are the phases of TFCH for $\alpha = 0.9, 0.6, 0.3$. We investigate how $E$ and $\tilde E$ evoloving numerically. For $\alpha = 0.9, 0.6, 0.3$, we show the evolutions of modified energy $\tilde E$ in Figure \ref{fig-ch-energy-2},  while in Figure \ref{fig-ch-energy-3} we compare the differences between $E$ and $\tilde E$.

\begin{figure}[!h]
  \centering
  \subfigure{
  \begin{minipage}[t]{0.35\textwidth}
  \centering
  \includegraphics[width=\textwidth]{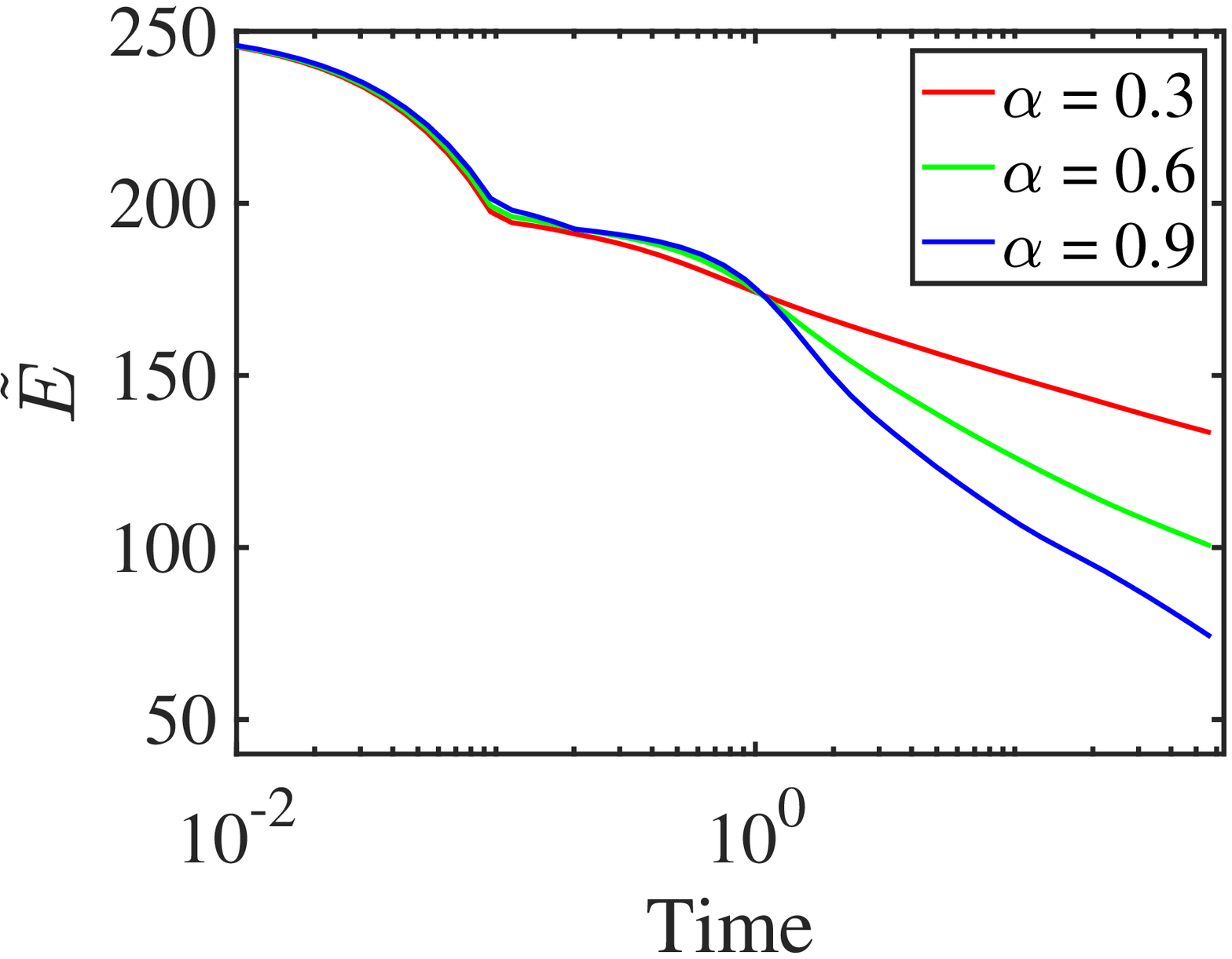}
  \end{minipage}
  }%
  \subfigure{
  \begin{minipage}[t]{0.35\textwidth}
    \centering
    \includegraphics[width=\textwidth]{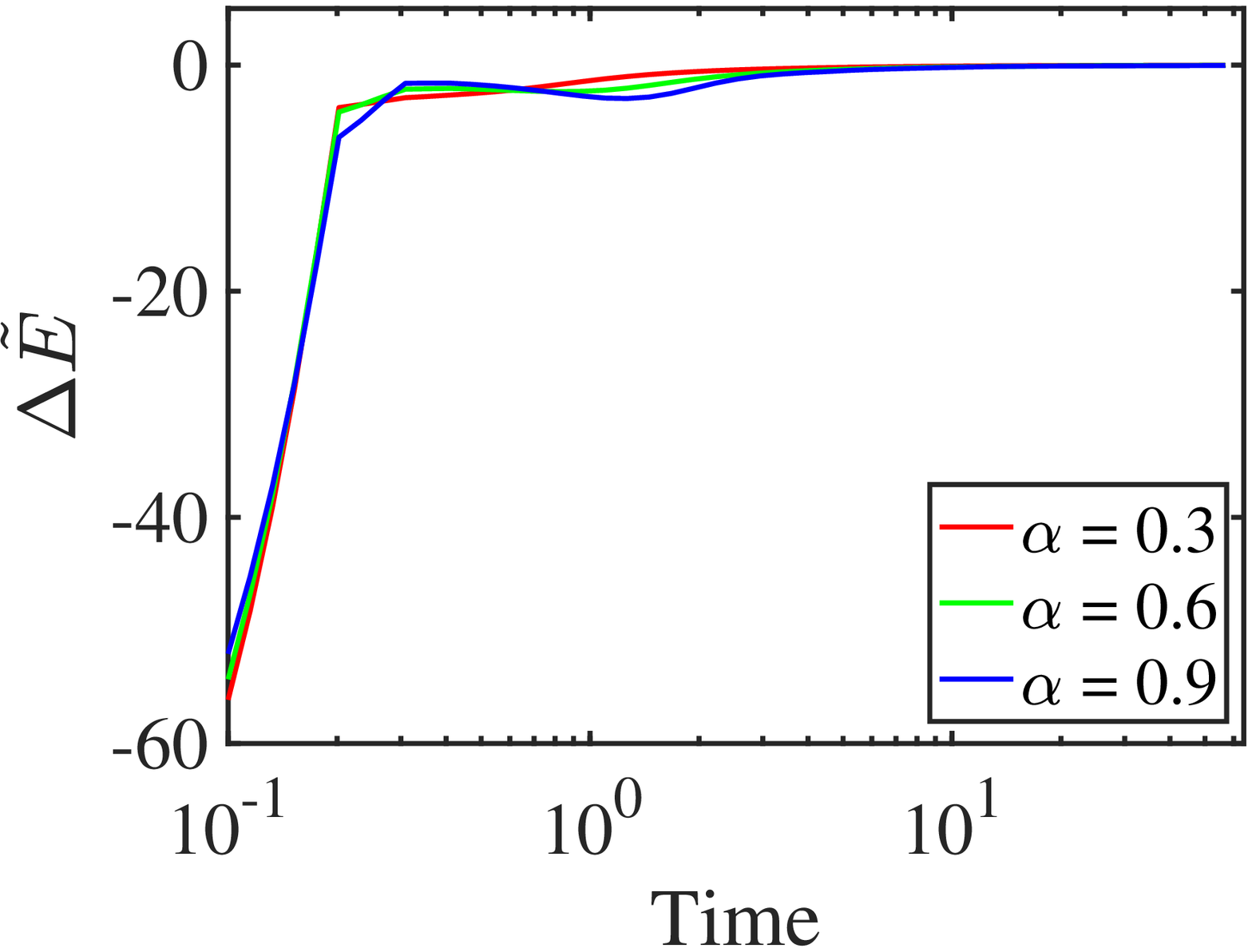}
  \end{minipage}
  }%
      \caption{Evolution of the modified energy $\tilde E$ and $\Delta \tilde E = \tilde E(t)-\tilde E(t-\Delta t)$ for the TFCH equation, computed by the L1-IMEX scheme.}\label{fig-ch-energy-2}
  \subfigure{
  \begin{minipage}[t]{0.33\textwidth}
  \centering
  \includegraphics[width=\textwidth]{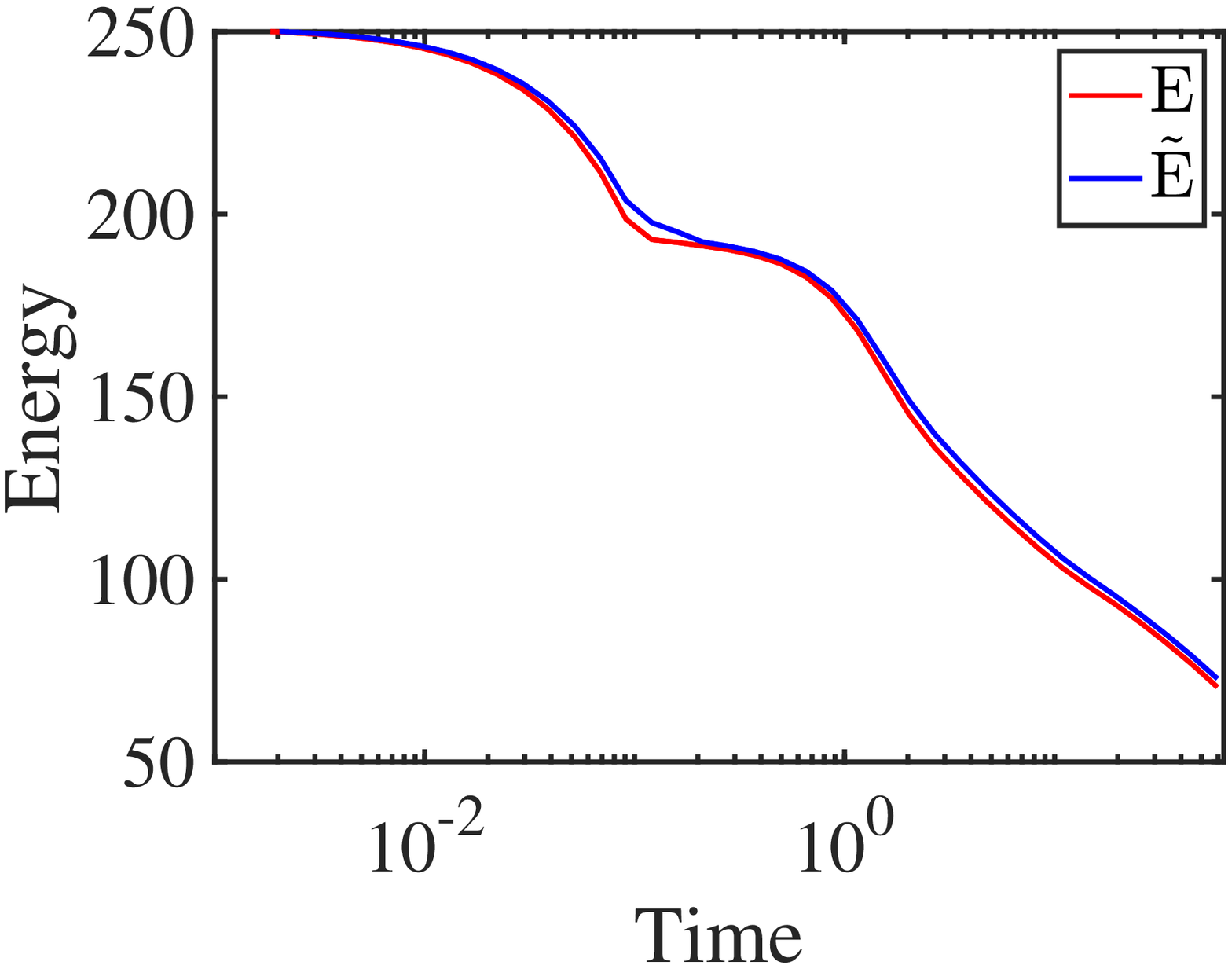}
  \end{minipage}
  }%
  \subfigure{
    \begin{minipage}[t]{0.33\textwidth}
      \centering
      \includegraphics[width=\textwidth]{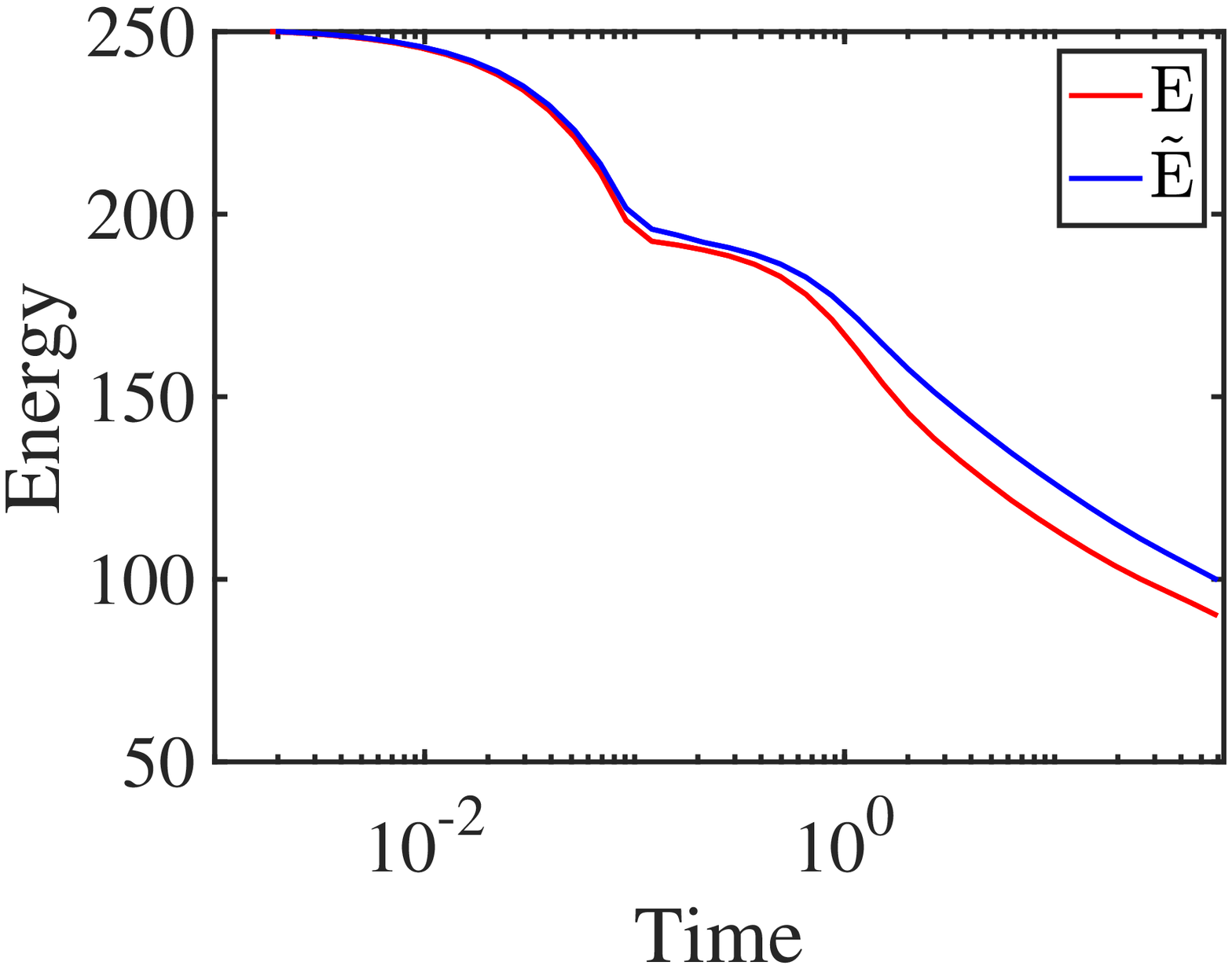}
    \end{minipage}
  }%
  \subfigure{
    \begin{minipage}[t]{0.33\textwidth}
    \centering
    \includegraphics[width=\textwidth]{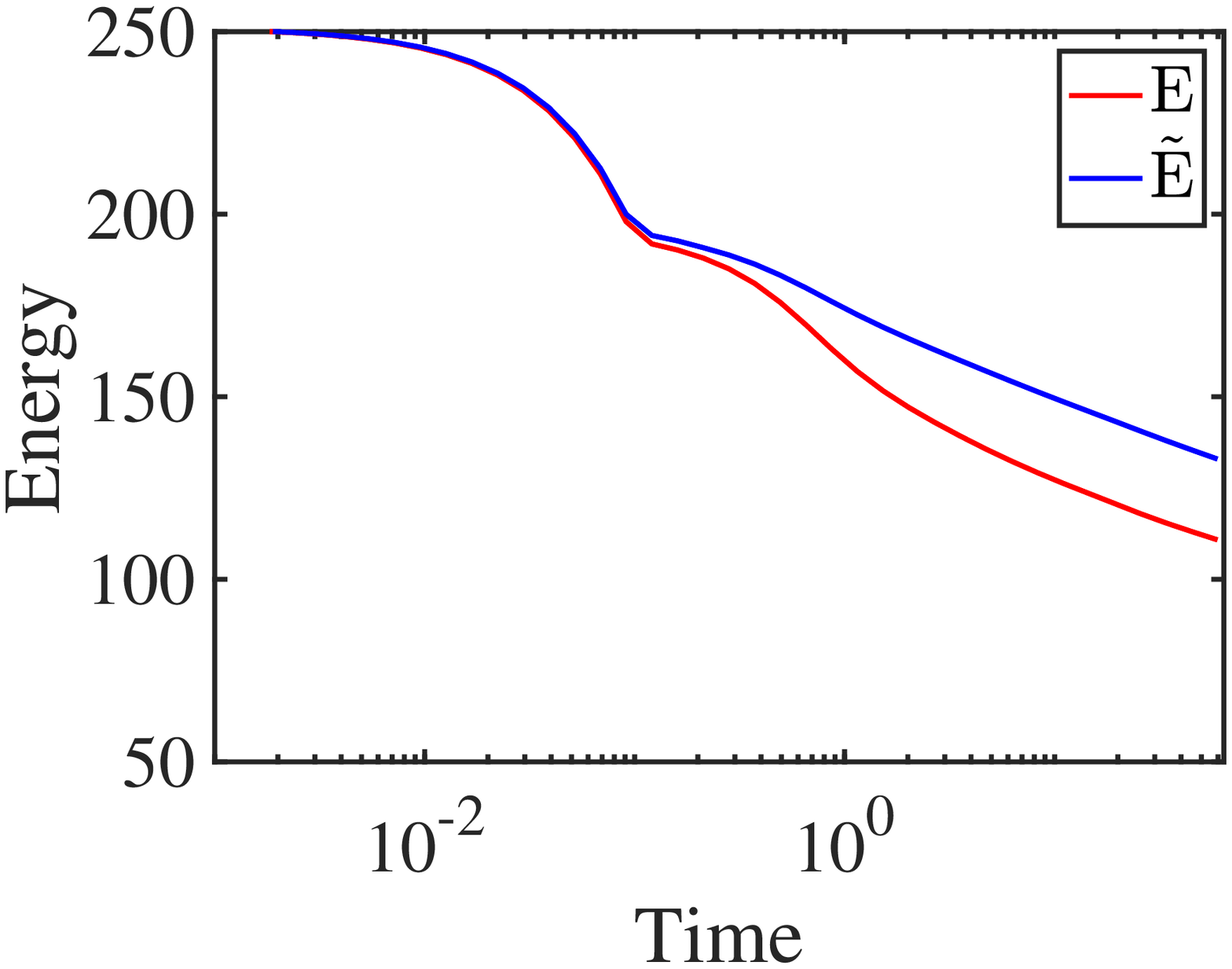}
  \end{minipage}
    }%
      \caption{Comparison of the original energy $E$ and the modified energy $\tilde E$ for the TFCH equation with $\alpha = 0.9, 0.6, 0.3$ (left, middle, right column, respectively), computed by the L1-IMEX scheme.}\label{fig-ch-energy-3}
  \centering
\end{figure}

\section{Conclusions and discussions}\label{sect8}
An upper bound of energy is constructed that decreases w.r.t. time, coincides with the original energy at $t=0$ and as $t$ tends to $\infty$, and converges to the original energy when the fractional order $\alpha\rightarrow 1$. 
This upper bound can also be seen as a modified energy, the summation of the original energy and a nonnegative modification term $\frac1\gamma D_\alpha$.
Accordingly, we prove on the discrete level that L1 and L2 schemes have similar modified energy decreasing w.r.t. time. 
Several numerical experiments are presented to verify the theoretical results.

Our result is stronger than that the energy of time-fractional phase-field equation is bounded by the initial energy.
A direct deduction is that the original energy preserves the dissipation property in a small neighbourhood at $t=0$.
However, it is still an open problem if the dissipation of original energy holds for all time.

\appendix

\section{Proof of Lemma \ref{lem_Holder}}\label{append3}
Without loss of generality, we take $\gamma=1$ and $\varepsilon^2=1$ in \eqref{eq01}. 
Consider the TFAC problem 
\begin{equation}\label{ac-D}
  \left\{
   \begin{aligned}
    &\partial_t^\alpha u -\Delta u  =  f(u), && x\in \Omega,\, t>0, \\
    &u  = 0,&& x \in \partial\Omega,\, t>0, \\
    &u(x,0) = u_0, && x\in \Omega,
   \end{aligned}
  \right.
\end{equation}
where $u_0 \in H^2(\Omega)\cap H_0^1(\Omega)$.

The first result in Lemma \ref{lem_Holder} can be easily obtained (see for example \cite{tang2019energy} for the maximum principle). 
Thus, we denote $C_* = \|f(u)\|_{L^\infty((0,T);L^2(\Omega))} <\infty$.

We now prove the second result in Lemma \ref{lem_Holder}.
Let $0<\lambda_1\leq\lambda_2\leq\cdots\leq\lambda_k\leq\cdots $ be the spectrum of the operator $-\Delta$ with zero Dirichlet boundary condition. By $\phi_k\in H^2(\Omega)\cap H_0^1(\Omega)$ we denote the orthonormal eigen-functions corresponding to $\lambda_k$: 
\begin{equation}
-\Delta\phi_k = \lambda_k\phi_k \quad\mbox{with }\phi_k|_{\partial\Omega}=0\mbox{ and } \|\phi_k\|=1.
\end{equation} Then the sequence $\{\phi_k\}_{k=1}^\infty$ forms the standard orthonormal basis in $L^2(\Omega)$.
We define the Mittag-Leffler function \cite{kilbas2006theory} by
\begin{align}
  E_{\alpha,\beta} = \sum_{k=0}^\infty\frac{z^k}{\Gamma(\alpha k+\beta)},\quad z\in\mathbb{C},\nonumber
\end{align}
where $\alpha>0$ and $\beta \in\mathbb{R}$ are constants.

According to \cite[Theorem 2.4]{sakamoto2011initial}, the solution $u(t,x)$ of the system \eqref{ac-D} can be rewritten as
\begin{align}\label{sol-ab}
  u(t) =\sum_{k=1}^{\infty}\left\{\langle u_0,\phi_k\rangle E_{\alpha,1}(-\lambda_k t^\alpha)+\int_0^t(t-\tau)^{\alpha-1}E_{\alpha,\alpha}(-\lambda_k  (t-\tau)^\alpha)\langle f(u(\tau)),\phi_k\rangle \,{\mathrm d}\tau\right\}\phi_k.
\end{align} 
Here and in the following, for simplicity, we use $u(t)$ to denote $u(t, x)$.
We can derive two Lemmas from \cite{sakamoto2011initial}:
\begin{lem}
\label{lem-B1}
  Let $0<\alpha<2$ and $\beta\in\mathbb{R}$ be arbitrary. Then there exists a positive constant $C_1 = C_1(\alpha,\beta)$ such that
  \begin{equation}
  |E_{\alpha,\beta}(-\xi)|\leq C_1,\quad \xi\in \mathbb R^+.\label{B-4}
  \end{equation}
\end{lem}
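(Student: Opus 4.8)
The plan is to split the range of $\xi$ into a bounded part and a tail, controlling each separately. First I would observe that the defining series $E_{\alpha,\beta}(z) = \sum_{k=0}^\infty z^k/\Gamma(\alpha k + \beta)$ has infinite radius of convergence, owing to the factorial-type growth of $\Gamma(\alpha k + \beta)$ in $k$; hence $E_{\alpha,\beta}$ is entire and in particular continuous on $\mathbb{R}$. Consequently, on any compact interval $[0,R]$ the quantity $|E_{\alpha,\beta}(-\xi)|$ attains a finite maximum, which already furnishes a uniform bound on the bounded part of the range.

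The crux is the tail $\xi \to \infty$, where I would invoke the classical asymptotic expansion of the Mittag-Leffler function (as recorded in \cite{sakamoto2011initial}). For $0 < \alpha < 2$ and $z$ lying in the sector $\frac{\pi\alpha}{2} < |\arg z| \le \pi$, one has, for any integer $N \ge 1$,
\[
E_{\alpha,\beta}(z) = -\sum_{k=1}^{N} \frac{z^{-k}}{\Gamma(\beta - \alpha k)} + O\left(|z|^{-N-1}\right), \quad |z| \to \infty.
\]
The essential point is that the argument restriction is compatible with the real negative axis: since $\arg(-\xi) = \pi$ and the hypothesis $\alpha < 2$ forces $\frac{\pi\alpha}{2} < \pi$, the decay regime applies to $z = -\xi$. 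Taking $N = 1$ then yields $E_{\alpha,\beta}(-\xi) = -\xi^{-1}/\Gamma(\beta - \alpha) + O(\xi^{-2})$, which tends to $0$ as $\xi \to \infty$. In particular there exist $R > 0$ such that $|E_{\alpha,\beta}(-\xi)| \le 1$ whenever $\xi > R$.

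Combining the two regimes, with $C_1 := \max\{\sup_{\xi \in [0,R]} |E_{\alpha,\beta}(-\xi)|, 1\}$, delivers the uniform bound \eqref{B-4}. The only genuine difficulty is the asymptotic expansion itself, which rests on the Hankel-contour integral representation of $E_{\alpha,\beta}$ together with a contour-deformation argument that isolates the algebraic residue contributions from an exponentially small remainder once one is outside the growth sector; since this is precisely the content we are entitled to borrow from \cite{sakamoto2011initial}, the remaining steps reduce to the elementary continuity and maximum argument described above.
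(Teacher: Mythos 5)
Your argument is correct, but it takes a more explicit route than the paper, whose entire proof is the single sentence that the statement is a special case of Lemma~3.1 of the cited reference (which records the stronger estimate $|E_{\alpha,\beta}(z)|\le C/(1+|z|)$ on a sector $\mu\le|\arg z|\le\pi$ with $\tfrac{\pi\alpha}{2}<\mu<\min(\pi,\pi\alpha)$, originally Podlubny's Theorem~1.6). You instead reconstruct boundedness from two ingredients: entirety of $E_{\alpha,\beta}$ (hence a finite maximum on $[0,R]$) and the classical large-$|z|$ asymptotic expansion outside the growth sector, correctly checking that $\arg(-\xi)=\pi>\tfrac{\pi\alpha}{2}$ precisely because $\alpha<2$. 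Both approaches ultimately rest on the same Hankel-contour asymptotics, so neither is more elementary at bottom; what your version buys is transparency about \emph{why} the cited sector condition is the right hypothesis, at the cost of invoking the full expansion where only the $O(|z|^{-1})$ decay (or even mere boundedness) is needed. One cosmetic remark: when $\beta-\alpha$ is a nonpositive integer the leading term $-z^{-1}/\Gamma(\beta-\alpha)$ vanishes identically since $1/\Gamma$ has zeros there, but the $O(|z|^{-2})$ remainder still forces $E_{\alpha,\beta}(-\xi)\to0$, so your conclusion is unaffected.
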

\begin{proof}
This result is a special case of \cite[Lemma 3.1]{sakamoto2011initial}.
\end{proof}
\begin{lem}\label{lem-B2}
For $\lambda>0,\alpha>0$ and positive integer $m\in\mathbb{N}$, we have
\begin{equation}
  \frac{\mathrm d}{{\mathrm d} t}E_{\alpha,1}(-\lambda t^\alpha) = -\lambda t^{\alpha-1}E_{\alpha,\alpha}(-\lambda t^\alpha),\quad t>0, \label{B-5}
\end{equation}
and
\begin{equation}
  \frac{\mathrm d}{{\mathrm d}t}(t^{\alpha-1}E_{\alpha,\alpha}(-\lambda t^\alpha)) = t^{\alpha-2}E_{\alpha,\alpha-1}(-\lambda t^\alpha),\quad t>0. \label{B-6}
\end{equation}
\end{lem}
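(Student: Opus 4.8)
The plan is to prove both identities by differentiating the defining power series of the Mittag--Leffler function term by term and then reindexing, using only the functional equation $\Gamma(z+1)=z\Gamma(z)$. At the outset I would record its two relevant instances, $\Gamma(\alpha k+1)=\alpha k\,\Gamma(\alpha k)$ and $\Gamma(\alpha k+\alpha)=(\alpha k+\alpha-1)\,\Gamma(\alpha k+\alpha-1)$, since these are precisely the factors that will cancel the polynomial coefficients produced by differentiation.

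For \eqref{B-5} I would start from
\[
E_{\alpha,1}(-\lambda t^\alpha)=\sum_{k=0}^\infty\frac{(-\lambda)^k t^{\alpha k}}{\Gamma(\alpha k+1)},
\]
differentiate in $t$ termwise (the $k=0$ term dropping out), and apply $\Gamma(\alpha k+1)=\alpha k\,\Gamma(\alpha k)$ to reduce the sum to $\sum_{k\ge 1}(-\lambda)^k t^{\alpha k-1}/\Gamma(\alpha k)$. The reindexing $k=j+1$ then factors out $-\lambda t^{\alpha-1}$ and leaves exactly $\sum_{j\ge 0}(-\lambda t^\alpha)^j/\Gamma(\alpha j+\alpha)=E_{\alpha,\alpha}(-\lambda t^\alpha)$, which is the asserted identity.

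For \eqref{B-6} I would multiply the series for $E_{\alpha,\alpha}(-\lambda t^\alpha)$ by $t^{\alpha-1}$ to obtain $\sum_{k\ge 0}(-\lambda)^k t^{\alpha k+\alpha-1}/\Gamma(\alpha k+\alpha)$, differentiate termwise so that the factor $\alpha k+\alpha-1$ emerges, and cancel it against $\Gamma(\alpha k+\alpha)=(\alpha k+\alpha-1)\Gamma(\alpha k+\alpha-1)$. The surviving series is $t^{\alpha-2}\sum_{k\ge 0}(-\lambda t^\alpha)^k/\Gamma(\alpha k+\alpha-1)=t^{\alpha-2}E_{\alpha,\alpha-1}(-\lambda t^\alpha)$, as required. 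Here I would remark that, because $1/\Gamma$ is entire, every coefficient $1/\Gamma(\alpha k+\alpha-1)$ is well defined (and merely vanishes should an argument land on a nonpositive integer), so no term becomes singular even when $\alpha-1<0$, which is the relevant regime $\alpha\in(0,1)$.

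The only point demanding genuine care, and the main obstacle, is justifying the termwise differentiation. Since $\alpha>0$, the coefficients $1/\Gamma(\alpha k+\beta)$ decay super-geometrically in $k$, so for each fixed $\lambda$ the series and all of its formal $t$-derivatives converge absolutely and uniformly on compact subsets of $(0,\infty)$; the standard theorem on differentiating uniformly convergent series of differentiable functions then licenses interchanging $\mathrm{d}/\mathrm{d}t$ with the summation. I would state this estimate once and invoke it for both \eqref{B-5} and \eqref{B-6}. Finally, I note that the index $m\in\mathbb{N}$ appearing in the statement does not enter either displayed identity and may be disregarded.
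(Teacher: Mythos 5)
Your proof is correct, but it follows a genuinely different route from the paper: the paper does not actually prove Lemma \ref{lem-B2} at all, it simply cites the literature, stating that \eqref{B-5} is \cite[Lemma 3.2]{sakamoto2011initial} and that \eqref{B-6} is mentioned on page 437 of \cite{sakamoto2011initial}. You instead give a self-contained derivation by termwise differentiation of the defining series: for \eqref{B-5}, differentiating $\sum_{k\ge 0}(-\lambda)^k t^{\alpha k}/\Gamma(\alpha k+1)$, cancelling $\alpha k$ against $\Gamma(\alpha k+1)=\alpha k\,\Gamma(\alpha k)$ and reindexing; for \eqref{B-6}, differentiating $\sum_{k\ge 0}(-\lambda)^k t^{\alpha k+\alpha-1}/\Gamma(\alpha k+\alpha)$ and cancelling $\alpha k+\alpha-1$. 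Both computations are sound, your uniform-convergence justification for interchanging $\mathrm{d}/\mathrm{d}t$ with the sum (super-geometric decay of $1/\Gamma(\alpha k+\beta)$, locally uniform in $t$ on $(0,\infty)$) is the right one, and your remark that $1/\Gamma$ is entire correctly disposes of the case where $\alpha k+\alpha-1$ lands on a nonpositive integer (e.g.\ $\alpha=1/(k+1)$), where both sides of the cancellation identity vanish. What the paper's approach buys is brevity and reliance on a standard reference; what yours buys is that the appendix becomes self-contained, valid for every $\alpha>0$ with the degenerate Gamma arguments handled explicitly, and it also explains the otherwise puzzling ``positive integer $m\in\mathbb{N}$'' in the statement: that $m$ is vestigial here (as you note, it never enters either identity), being inherited from the cited lemma of \cite{sakamoto2011initial}, which states the more general formula for $m$-th derivatives of $E_{\alpha,1}(-\lambda t^\alpha)$.
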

\begin{proof}
\eqref{B-5} is given in \cite[Lemma 3.2]{sakamoto2011initial}, while \eqref{B-6} is mentioned in \cite[page 437]{sakamoto2011initial}.
\end{proof}
  
According to \eqref{sol-ab}, we have 
  \begin{align}
  \|u(t+h)-u(t)\| &\leq \Big(\sum_{k=1}^{\infty}\left|\langle u_0,\phi_k\rangle \big(E_{\alpha,1}(-\lambda_k (t+h)^\alpha)-E_{\alpha,1}(-\lambda_k t^\alpha)\big)\right|^2\Big)^\frac12\notag\\
  &+\int_t^{t+h} (t+h-\tau)^{\alpha-1}\Big(\sum_{k=1}^\infty\left|\big(E_{\alpha,\alpha}(-\lambda_k (t+h-\tau)^\alpha)\big)\langle f(u),\phi_k\rangle\right|^2\Big)^\frac12 \,{\mathrm d}\tau\notag\\
  &+\int_0^t \Big(\sum_{k=1}^\infty\big|\big((t+h-\tau)^{\alpha-1}E_{\alpha,\alpha}(-\lambda_k (t+h-\tau)^\alpha\nonumber\\
  &\qquad\qquad\qquad-(t-\tau)^{\alpha-1}E_{\alpha,\alpha}(-\lambda_k (t-\tau)^\alpha)\big)\langle f(u),\phi_k\rangle\big|^2 \Big)^\frac12 \,{\mathrm d}\tau\notag\\
  &=: I_1+I_2+I_3.\label{B-7}
  \end{align}
  Using \eqref{B-4}--\eqref{B-6} in Lemmas \eqref{lem-B1}--\eqref{lem-B2}, we have
  \begin{align}
    I_1^2&=\sum_{k=1}^{\infty}\left|\langle u_0,\phi_k\rangle \big(E_{\alpha,1}(-\lambda_k (t+h)^\alpha)-E_{\alpha,1}(-\lambda_k t^\alpha)\big)\right|^2\notag\\
    &= \sum_{k=1}^{\infty}\left|\langle u_0,\phi_k\rangle \int_t^{t+h}\left(-\lambda_k s^{\alpha-1}E_{\alpha,\alpha}(-\lambda_k s^{\alpha})\right) \,{\mathrm d}s\right|^2\notag\\
    &\leq C_1^2\Big(\sum_{k=1}^{\infty}|-\lambda_k\langle u_0,\phi_k\rangle|^2 \Big)\big((t+h)^\alpha-t^\alpha\big)^2\notag\\
    &\leq C\|\Delta u_0\|^2h^{2\alpha},\label{B-8}
  \end{align}
  \begin{align}
    I_2 &= \int_t^{t+h} (t+h-\tau)^{\alpha-1}\Big(\sum_{k=1}^\infty\left|\big(E_{\alpha,\alpha}(-\lambda_k (t+h-\tau)^\alpha)\big)\langle f(u),\phi_k\rangle\right|^2\Big)^\frac12 \,{\mathrm d}\tau\notag\\
    & \leq C_1\int_t^{t+h} (t+h-\tau)^{\alpha-1}\Big(\sum_{k=1}^\infty\left|\langle f(u),\phi_k\rangle\right|^2\Big)^\frac12 \,{\mathrm d}\tau\notag\\
    & \leq C_1C_*  h^\alpha,  \label{B-9}
  \end{align} and
  \begin{align}
   I_3 &= \int_0^t \Big(\sum_{k=1}^\infty\Big| \langle f(u),\phi_k\rangle\int_{t-\tau}^{t+h-\tau}\frac{\mathrm d}{{\mathrm d}s}\left(s^{\alpha-1}E_{\alpha,\alpha}(-\lambda_k  s^\alpha)\right){\mathrm d}s\Big|^2 \Big)^\frac12 \,{\mathrm d}\tau\notag\\
    &= \int_0^t \Big (\sum_{k=1}^\infty\Big|\langle f(u),\phi_k\rangle \int_{t-\tau}^{t+h-\tau}s^{\alpha-2}E_{\alpha,\alpha-1}(-\lambda_k   s^\alpha)\,{\mathrm d}s\Big|^2\Big)^\frac12 \,{\mathrm d}\tau\notag\\
    &\leq C_1\int_0^t\int_{t-\tau}^{t+h-\tau}s^{\alpha-2} \,{\mathrm d}s \Big(\sum_{k=1}^\infty\big|\langle f(u),\phi_k\rangle\big|^2\Big)^\frac12 \,{\mathrm d}\tau\notag\\
    &\leq C_1\|f(u)\|_{L^\infty((0,T);L^2(\Omega))} \frac{h^\alpha+t^\alpha-(t+h)^\alpha}{\alpha(1-\alpha)}\notag\\
    &\leq \frac{C_1C_*h^\alpha}{\alpha(1-\alpha)}. \label{B-10}
  \end{align}
Combining \eqref{B-7}--\eqref{B-10}, we have
\begin{equation}
  \|u(t+h)-u(t)\| \leq C h^\alpha,\quad \forall t,h>0,
\end{equation}
where $C$ is independent on $t$.

\bibliographystyle{plain}
\bibliography{refers.bib}
\end{document}